\documentclass[a4paper,11pt,reqno]{amsart}
\usepackage{amsmath}
\usepackage[T1]{fontenc}
\usepackage{amssymb}
\usepackage{amsthm}
\usepackage{color}
\usepackage[lmargin=2.5 cm,rmargin=2.5 cm,tmargin=3.5cm,bmargin=2.5cm,
paper=a4paper]{geometry}

\newcommand{\Ab}{\mathbf A}
\newcommand{\Fb}{\mathbf F}

\newcommand{\ab}{\mathbf a}
\newcommand{\R}{\mathbb R}
\newcommand{\C}{\mathbb C}
\newcommand{\Z}{\mathbb Z}
\newcommand{\N}{\mathbb N}

\DeclareMathOperator{\E0}{E_{\rm g.st}}

\DeclareMathOperator{\IM}{Im}
\DeclareMathOperator{\curl}{curl}\DeclareMathOperator{\Div}{div}
 \DeclareMathOperator{\dist}{dist}
\DeclareMathOperator{\supp}{supp} 

\newtheorem{thm}{Theorem}[section]
\newtheorem{prop}[thm]{Proposition}
\newtheorem{lem}[thm]{Lemma}

\theoremstyle{remark}
\newtheorem{rem}[thm]{Remark}

\numberwithin{equation}{section}

\title[3D Ginzburg-Landau  functional]{The ground state energy of the three dimensional Ginzburg-Landau functional\\
{\small
Part~I: Bulk regime}}

\author[S. Fournais]{S\o ren Fournais}
\address[S. Fournais and A. Kachmar$^{a, b}$]
{Department of Mathematical Sciences, University
  of Aarhus, Ny Munkegade, Building
  1530, DK-8000 \AA rhus C, Denmark}
\address{$^a$Lebanese University, Department of Mathematics, Hadath, Lebanon.}
\address{$^b$Lebanese International University, School of arts and sciences, Beirut, Lebanon.}
\email[S. Fournais]{fournais@imf.au.dk} \email[A.
Kachmar]{akachmar@imf.au.dk}

\author[A. Kachmar]{Ayman Kachmar}

\date{\today}
\keywords{Ginzburg-Landau functional, thermodynamic limits, elliptic
  estimates, variational methods, magnetic Schr\"odinger operators,
  semiclassical analysis}

\begin{document}
\begin{abstract}
We consider the Ginzburg-Landau functional defined over a bounded
and smooth three dimensional  domain. Supposing that the magnetic
field is comparable with the second critical field and that the
Ginzburg-Landau parameter is large, we determine a sharp asymptotic
estimate of the minimizing energy. In particular, this shows how
bulk superconductivity decreases in average as the applied magnetic
field approaches the second critical field from below. Other
estimates are also obtained which allow us to obtain, in a
subsequent paper \cite{FKP}, a fine characterization of the second
critical field. The approach relies on a careful analysis of several
limiting energies, which is of independent interest.
\end{abstract}
\maketitle

\section{Introduction and main results}
\subsection{Introduction}
Since the publication of the pioneering books \cite{BBH, JT}, the
analysis of the Ginzburg-Landau functional of superconductivity
became the subject of a vast mathematical literature. In particular,
in the presence of an applied magnetic field and in an asymptotic
limit where the Ginzburg-Landau parameter tends to $\infty$,
different advanced mathematical tools indicates three critical
values of the applied magnetic field, usually denoted by $H_{C_1}$,
$H_{C_2}$ and $H_{C_3}$. These critical fields may be described in a
rough way as follows (see e.g. the book of de\,Gennes \cite{deG}).
If a superconducting sample is subject to a constant applied
magnetic field of intensity $H$, then as long as $H<H_{C_1}$, the
sample is in a pure superconducting state and repels the magnetic
field. If the field is slightly increased above $H_{C_1}$, the
sample looses superconductivity in point defects called vortices,
whose number increases as long as the field $H$ is increased all the
way up to the critical value $H_{C_2}$; this phase of the
superconducting sample is the {\it mixed phase}. Increasing the
field above $H_{C_2}$ destroys superconductivity in the bulk of the
sample, but as long as the field $H$ is below $H_{C_3}$, the sample
carries superconductivity along the surface. Finally, if the field
is increased past $H_{C_3}$, superconductivity is lost in the sample
which switches to the normal state. However, to establish these
results in a rigorous mathematical framework, a variety of notations
and methods are required. We mention here that the analysis of the
presence of vortices is the subject of the monograph \cite{SS} and
references therein, while that of surface superconductivity is that
of \cite{FH-b} (and references therein). The methods originally
developed to the analysis of the Ginzburg-Landau functional are also
appearing relevant in the analysis of other mathematical models of
condensed matter physics like superfluidity, Bose-Einstein
condensates and liquid crystals (see \cite{A, Retal, Pa1} and the
references therein).

In this paper, we focus on the analysis of the Ginzburg-Landau
functional in a three dimensional domain and as the Ginzburg-Landau
parameter tends to $\infty$. The regime of the applied magnetic
field considered here is when the field intensity varies close to
and below the second critical field $H_{C_2}$. In two other
subsequent papers \cite{FKP, Ka}, the analysis is completed to cover
variations of the magnetic field intensity in the regime  $\gg
H_{C_1}$ and increasing all the way up to $H_{C_3}$. The novel issue
in the  results is that previous ones where only present for the two
dimensional Ginzburg-Landau functional, see \cite{FH-b,SS}.

Many questions which are answered successfully for the
two-dimensional Ginzburg-Landau functional remain open  for the
three-dimensional functional.  This includes, among other things, a
determination of the first critical field $H_{C_1}$ as precise as
the one present in two-dimensions\,\footnote{It is only recently
that results related to this regime of the magnetic field and valid
for general domains appeared in \cite{Jetal}. Earlier contributions
include a candidate for the expression of $H_{C_1}$ in the case of
the ball \cite{ABM}, and an expression of $H_{C_1}$ in the case of
`thin' shell domains \cite{CS}.}. Among the other open questions, we
mention the analysis of the mixed phase  and the analysis of the
critical field $H_{C_2}$ (that is part of the subject of this and
the subsequent papers \cite{FKP, Ka}).

However, in three-dimensions, the analysis of the third critical
field $H_{C_3}$ already started in \cite{LP}, then a sharp
characterization of it is given in \cite{FH3d}. The regime of an
external magnetic field strength close to and below $H_{C_3}$ is
discussed in the papers \cite{Al, Pa}. In particular, it is proved
that superconductivity is confined to the surface. These results
will be significantly improved in the subsequent paper \cite{FKP}.

In \cite{Al}, interesting estimates are proved in the regime of
applied fields close to and below $H_{C_2}$, which also is the
regime we treat in this paper. We improve  the estimates in
\cite{Al} by proving sharp estimates. The approach we follow is
considerably different from \cite{Al} and mainly   variational in
nature.

\subsection{The functional and main results}

We consider a bounded and open set $\Omega\subset\R^3$ with smooth boundary. We
 suppose that $\Omega$ models a superconducting sample subject to an applied external magnetic field.
The energy of
the sample is
given by the Ginzburg-Landau functional,
\begin{multline}\label{eq-3D-GLf}
\mathcal E^{\rm 3D}(\psi,\Ab)=\mathcal
E_{\kappa,H}^{\rm 3D}(\psi,\Ab)=
\int_\Omega\left[
|(\nabla-i\kappa
H\Ab)\psi|^2-\kappa^2|\psi|^2+\frac{\kappa^2}{2}|\psi|^4\right]\,dx\\
+\kappa^2H^2\int_{\R^3}|\curl\Ab-\beta|^2\,dx\,.
\end{multline}
Here $\kappa$ and $H$ are two positive parameters; $\kappa$ (the Ginzburg-Landau constant) is a material parameter and $H$ measures the intensity of the applied magnetic field.
The wave function
(order parameter)
$\psi\in H^1(\Omega;\C)$ describes the superconducting properties of the material.
The induced magnetic field is $\kappa H \curl \Ab$, where the potential $\Ab\in
\dot{H}^1_{\Div,\Fb}(\R^3)$, with $\dot H^1_{\Div,\Fb}(\R^3)$ being the natural variational space for the problem defined in
\eqref{eq-3D-hs} below. Finally, $\beta$ is the profile and direction of the external magnetic
field that we choose constant, $\beta=(0,0,1)$.

The space $\dot H^1(\R^3)$ is the homogeneous Sobolev space, i.e. the closure
of $C_c^\infty(\R^3)$ under the norm $u\mapsto\|u\|_{\dot
  H^1(\R^3)}:=\|\nabla u\|_{L^2(\R^3)}$. Let further
  $\Fb(x)=(-x_2/2,x_1/2,0)$. Clearly $\Div \Fb=0$.

We define the space,
\begin{equation}\label{eq-3D-hs}
\dot H^1_{\Div,\Fb}(\R^3)=\{\Ab~:~\Div \Ab=0\,,\quad~{\rm and}\quad
\Ab-\Fb\in \dot H^1(\R^3)\}\,.
\end{equation}

Critical points $(\psi,\Ab)\in H^1(\Omega;\C)\times \dot
H^1_{\Div,\Fb}(\R^3)$ of $\mathcal E ^{\rm 3D}$ are weak solutions of
the Ginzburg-Landau equations,
\begin{equation}\label{eq-3D-GLeq}
\left\{
\begin{array}{lll}
-(\nabla-i\kappa H\Ab)^2\psi=\kappa^2(1-|\psi|^2)\psi&{\rm in}& \Omega
\\
\curl^2\Ab=-\displaystyle\frac1{\kappa H}\IM(\overline{\psi}\,(\nabla-i\kappa
H\Ab)\psi)\mathbf 1_\Omega&{\rm in}& \R^3\\
\nu\cdot(\nabla-i\kappa H\Ab)\psi=0&{\rm on}&\partial\Omega\,,
\end{array}\right.\end{equation}
where $\mathbf 1_\Omega$ is the characteristic function of the domain $\Omega$, and $\nu$ is the unit interior normal vector  of $\partial\Omega$.

Our main results include a sharp asymptotic estimate of the ground state energy which is defined as follows,
\begin{equation}\label{eq-3D-gs}
\E0(\kappa,H)=\inf\big\{
\mathcal E^{\rm 3D}(\psi,\Ab)~:~(\psi,\Ab)\in H^1(\Omega;\C)\times \dot H^1_{\Div,\Fb}(\R^3)\big\}\,.
\end{equation}
The asymptotic estimate of $\E0(\kappa,H)$ involves an auxiliary function $g:[0,\infty)\to[-\frac12,0]$ introduced in \eqref{eq-g(b)} below. The function $g$ is increasing, continuous
and $g(b)=0$ for all $b\geq 1$.

\begin{thm}\label{thm-3D-main}
Let $0<\Lambda_{\rm min}< \Lambda_{\rm max}$. There exist positive constants $C$ and $\kappa_0$ such that if
\begin{align*}
\kappa_0 \leq \kappa,\qquad \Lambda_{\rm min}\leq \frac{H}{\kappa}\leq  \Lambda_{\rm max}\,,
\end{align*}
then the ground state energy in \eqref{eq-3D-gs} satisfies,
\begin{equation}\label{eq-3D-thm}
\Big| \E0(\kappa,H)-g(H/\kappa)|\Omega|\kappa^2\Big|\leq C\kappa^{3/2}\,.
\end{equation}
\end{thm}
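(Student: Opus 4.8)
The estimate \eqref{eq-3D-thm} is a two-sided bound, so the plan is to prove matching upper and lower bounds on $\E0(\kappa,H)$, each accurate to $O(\kappa^{3/2})$. The guiding principle is that the energy density of a minimizer is governed, to leading order, by a local constant-field problem whose normalized ground state energy is exactly $g(b)\kappa^2$ per unit volume, with $b=H/\kappa$. Concretely, freezing the potential at $\Ab=\Fb$ (so $\curl\Ab=\beta$ and the field self-energy vanishes) and rescaling lengths at the magnetic scale $(\kappa H)^{-1/2}$ converts the bulk integrand into the reference functional defining $g$ in \eqref{eq-g(b)}, with $b=H/\kappa$ entering as the effective field strength; since $\beta=(0,0,1)$ the longitudinal direction contributes only a trivial factor, so the three-dimensional local energy reduces to the two-dimensional one. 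I will use throughout the recalled properties of $g$: continuity, monotonicity, $g([0,\infty))\subset[-\tfrac12,0]$, and $g(b)=0$ for $b\geq1$.

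For the upper bound I construct an explicit trial pair with $\Ab=\Fb$, so only the bulk part of $\mathcal E^{\rm 3D}$ survives. Tile $\Omega$ by cubes of side $\ell$, with $(\kappa H)^{-1/2}\ll\ell\ll1$ to be optimized, and in each cube contained in $\Omega$ insert a gauge-adjusted rescaled near-minimizer of the local problem, normalized so that $|\psi|\leq1$ and cut off smoothly near the cube faces to obtain a global $H^1$ function. Each interior cube then contributes $g(b)\kappa^2\ell^3$ up to a relative error from the cut-off collar of width $\sim(\kappa H)^{-1/2}$, while the cubes meeting $\partial\Omega$, of total volume $\sim\ell\,|\partial\Omega|$, contribute at worst $O(\kappa^2\ell)$. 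Summing over the $\sim|\Omega|\,\ell^{-3}$ cubes and balancing the collar error against the boundary-layer error fixes $\ell$ and gives $\E0(\kappa,H)\le g(H/\kappa)|\Omega|\kappa^2+C\kappa^{3/2}$.

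For the lower bound I start from a minimizer $(\psi,\Ab)$ and first record the a priori estimates: $\|\psi\|_{L^\infty(\Omega)}\le1$ by the maximum principle applied to the first equation in \eqref{eq-3D-GLeq}, and, using the pointwise inequality $-\kappa^2|\psi|^2+\tfrac{\kappa^2}{2}|\psi|^4\ge-\tfrac{\kappa^2}{2}$ together with $\E0\le0$, the field estimate $\kappa^2H^2\|\curl\Ab-\beta\|_{L^2(\R^3)}^2\le C\kappa^2$. Because $H\sim\kappa$ this yields $\|\curl(\Ab-\Fb)\|_{L^2(\R^3)}\lesssim\kappa^{-1}$, hence $\|\Ab-\Fb\|_{L^6(\R^3)}\lesssim\kappa^{-1}$ by the embedding $\dot H^1(\R^3)\hookrightarrow L^6(\R^3)$ and $\Div(\Ab-\Fb)=0$. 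Substituting $\Ab=\Fb+(\Ab-\Fb)$ in the kinetic term and estimating the cross and quadratic corrections by Cauchy--Schwarz and H\"older, using $\|\psi\|_{L^\infty}\le1$ and $\|\psi\|_{L^2(\Omega)}^2\le|\Omega|$, shows that replacing $\Ab$ by $\Fb$ costs at most $O(\kappa^{3/2})$. With the potential effectively frozen, I localize the bulk energy on the same grid: on each interior cube the integrand equals, up to a local gauge, the reference functional at field $b$, so its energy is bounded below by $g(b)\kappa^2\ell^3$ minus an IMS localization error, and the boundary cubes are bounded below crudely by $-C\kappa^2\ell\,|\partial\Omega|$; summing and optimizing $\ell$ yields the matching lower bound.

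The main obstacle is the lower bound, and within it the interplay between the magnetic correction and the localization. The delicate points are, first, upgrading the mere $L^2$ smallness of $\curl(\Ab-\Fb)$ into a genuine $O(\kappa^{3/2})$ control of the kinetic energy change, which forces one to exploit $\|\psi\|_{L^\infty}\le1$ rather than relying on $L^6$ bounds alone; and second, choosing $\ell$ so that the localization error from the partition gradients ($\sim\ell^{-2}$ per unit volume) and the field oscillation across a cube are both subdominant to $\kappa^{3/2}$, while $\ell$ remains large enough for the thermodynamic approximation by $g$ to hold on each cube. Verifying that all error terms are simultaneously $O(\kappa^{3/2})$ at the optimal intermediate scale between $(\kappa H)^{-1/2}$ and $1$ is the crux of the argument.
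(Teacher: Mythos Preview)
Your upper bound sketch is essentially the paper's construction. The lower bound, however, has a genuine gap at the step where you ``replace $\Ab$ by $\Fb$'' globally. From $\|\curl(\Ab-\Fb)\|_{L^2(\R^3)}\lesssim\kappa^{-1}$ and the embedding $\dot H^1\hookrightarrow L^6$ you get only $\|\Ab-\Fb\|_{L^6}\lesssim\kappa^{-1}$, hence $\|\Ab-\Fb\|_{L^2(\Omega)}\lesssim\kappa^{-1}$. The quadratic correction to the kinetic energy is then
\[
(\kappa H)^2\int_\Omega|\Ab-\Fb|^2\,|\psi|^2\,dx
\;\le\;(\kappa H)^2\,\|\Ab-\Fb\|_{L^6}^2\,\|\psi\|_{L^3}^2
\;\lesssim\;\kappa^4\cdot\kappa^{-2}\;=\;\kappa^2,
\]
which is one full power of $\kappa$ too large; invoking $\|\psi\|_{L^\infty}\le1$ or $\|\psi\|_{L^2}^2\le|\Omega|$ does not improve this, and the cross term is of the same order after Cauchy--Schwarz. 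So the assertion that the global replacement costs $O(\kappa^{3/2})$ is incorrect as written, and your own closing paragraph correctly flags this as the crux without actually resolving it.

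The paper's route is different in exactly this point: it never freezes $\Ab$ globally. It first upgrades the curl estimate to $C^1$ control, $\|\Ab-\Fb\|_{C^{1,1/2}(\overline\Omega)}\lesssim\kappa^{-1}$, via the second Ginzburg--Landau equation and elliptic regularity (this also uses the pointwise gradient bound $\|(\nabla-i\kappa H\Ab)\psi\|_{L^\infty}\lesssim\kappa$, which you do not mention). Then, \emph{inside each cube} $Q_\ell$ with center $x_0$, it performs the local linear gauge $\phi_0(x)=(\Ab(x_0)-\Fb(x_0))\cdot x$, after which $|\Ab-\nabla\phi_0-\Fb|\le\|\nabla(\Ab-\Fb)\|_{L^\infty}\,\ell\lesssim\kappa^{-1}\ell$ \emph{pointwise} on $Q_\ell$. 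The quadratic correction per cube is now $(\kappa H)^2(\kappa^{-1}\ell)^2|Q_\ell|\lesssim\kappa^2\ell^2|Q_\ell|$; after the Cauchy--Schwarz splitting with parameter $\delta=\ell$ and summation over $\sim|\Omega|\ell^{-3}$ cubes the total error is $\ell\kappa^2$, which equals $\kappa^{3/2}$ at $\ell=\kappa^{-1/2}$. The correct order of operations is therefore: localize first, then gauge away $\Ab-\Fb$ cube by cube using the $C^1$ bound. The $L^6$ information alone is insufficient.
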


Theorem~\ref{thm-3D-main} is an extension to three-dimensional
domains  of the analogous Theorem~1.4 in \cite{SS02} established for
two-dimensional domains. Let us emphasize that the limiting function
$g(\cdot)$ in Theorem~\ref{thm-3D-main} is the same as that in
\cite{SS02}, up to an additive constant.

In the particular regime when $H=\kappa+o(\kappa)$, Theorem~\ref{thm-3D-main} tells
us that $\E0(\kappa,H)=o(\kappa^2)$ as $\kappa\to\infty$. In this case,
the leading order term in the ground
state energy is given in the subsequent part of this paper \cite{FKP}.
Actually, we will prove that there exist two constants $E_{\rm surf}<0$  and $E_2<0$ such that,
\begin{equation}\label{eq-FKP}
\E0(\kappa,H)=E_{\rm surf}\kappa+E_2|\Omega|[\kappa-H]_+^2+o
\bigg(\max\left(\kappa,\left[\kappa-H\right]_+^2\right)\bigg)\,.
\end{equation}
Here we use the function $[x]_+=\max(x,0)$, $x\in\R$. We stress that
the constant $E_{\rm surf}$ depends (only) on the boundary of
$\Omega$, while the constant $E_2$ is universal.
Part of the necessary estimates to prove \eqref{eq-FKP} are given in the last section of this paper.

Furthermore, in the subsequent part of this paper \cite{FKP}, we determine leading order estimates
of the ground state energy when the magnetic field increases all the way up to the critical value $H_{C_3}$. This gives an overall understanding of the ground state energy of three dimensional superconductors subject to strong magnetic fields, with a precise description of the transition from {\it bulk} to {\it surface} regime as well.

The next theorem concerns the behavior of order parameters for critical configurations.
We obtain a sharp asymptotic estimate on the average of the order parameter on small cubes.

\begin{thm}\label{thm-3D-op}
Suppose the assumptions of Theorem~\ref{thm-3D-main} are satisfied. There exist positive constants
$C$, $R_0$  and $\kappa_0$ such that, if
$$\kappa_0\leq\kappa\,,\quad R_0\kappa^{-1}\leq \ell\leq\frac12\,,$$
and
$Q_{\ell}$ is a cube of side-length $\ell$ and parallel to the external magnetic field $\beta$, satisfying $Q_{\ell}\subset\Omega$, then the following is true.
\begin{enumerate}
\item If $(\psi,\Ab)\in H^1(\Omega;\C)\times \dot H^1_{\Div,\Fb}(\R^3)$ is a solution of \eqref{eq-3D-GLeq}, then,
\begin{equation}\label{eq-3D-op'}
\frac1{|Q_{\ell}|}\int_{Q_{\ell}}|\psi|^4\,dx\leq -2g(H/\kappa)
+C\left(\kappa^{-1/2}+\frac1{\ell\kappa}\right)\,,\quad{\rm as}\quad\kappa\to\infty\,.
\end{equation}
\item
If $(\psi,\Ab)\in H^1(\Omega;\C)\times \dot H^1_{\Div,\Fb}(\R^3)$ is a minimizer of \eqref{eq-3D-GLf}, then,
\begin{equation}\label{eq-3D-op}
\left|\frac1{|Q_{\ell}|}\int_{Q_{\ell}}|\psi|^4\,dx+2g(H/\kappa)\right|\leq
C\left(\kappa^{-1/2}+\frac1{\ell\kappa}\right)\,,\quad{\rm as}\quad\kappa\to\infty\,.
\end{equation}
\end{enumerate}
\end{thm}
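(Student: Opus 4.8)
The plan is to deduce the cube-averaged $L^4$ bounds from the energy asymptotics of Theorem~\ref{thm-3D-main} together with the associated local energy estimates that must already be in place to prove that theorem. The guiding heuristic is that $g(H/\kappa)$ is the limiting energy \emph{density} per unit volume (divided by $\kappa^2$), and that on a cube $Q_\ell$ well inside $\Omega$ the local Ginzburg--Landau energy is, to leading order, $g(H/\kappa)\kappa^2|Q_\ell|$. Since the energy density contains the term $\tfrac{\kappa^2}{2}|\psi|^4-\kappa^2|\psi|^2$, controlling the local energy should pin down the average of $|\psi|^4$.

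\smallskip

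\emph{Step 1: Local energy on a cube.} First I would introduce, for a configuration $(\psi,\Ab)$ and a cube $Q_\ell\subset\Omega$, the local energy
\begin{equation*}
\mathcal E_0(\psi,\Ab;Q_\ell)=\int_{Q_\ell}\Big[|(\nabla-i\kappa H\Ab)\psi|^2-\kappa^2|\psi|^2+\tfrac{\kappa^2}{2}|\psi|^4\Big]\,dx,
\end{equation*}
and prove a matching two-sided bound
\begin{equation*}
\big|\mathcal E_0(\psi,\Ab;Q_\ell)-g(H/\kappa)\kappa^2|Q_\ell|\big|\le C\kappa^2|Q_\ell|\Big(\kappa^{-1/2}+\tfrac{1}{\ell\kappa}\Big).
\end{equation*}
The lower bound here is the universal one coming from the reference model defining $g$ (after the standard gauge change replacing $\Ab$ by the linear field on the cube, with error controlled by elliptic estimates on $\curl\Ab-\beta$ that are available from the $\kappa^2H^2\int|\curl\Ab-\beta|^2$ term). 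For a minimizer, the upper bound comes from a test-function/patching argument summing local minimizers over a tiling of $\Omega$, which is precisely the mechanism behind the upper bound in Theorem~\ref{thm-3D-main}; localizing to a single cube gives the displayed density estimate.

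\smallskip

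\emph{Step 2: From local energy to $|\psi|^4$.} Next I would multiply the first Ginzburg--Landau equation in \eqref{eq-3D-GLeq} by $\overline\psi$ and integrate over $Q_\ell$. Integration by parts converts $\int_{Q_\ell}|(\nabla-i\kappa H\Ab)\psi|^2$ into $\kappa^2\int_{Q_\ell}(|\psi|^2-|\psi|^4)$ plus a boundary term on $\partial Q_\ell$. Substituting this into $\mathcal E_0(\psi,\Ab;Q_\ell)$ eliminates the kinetic term and yields the clean identity
\begin{equation*}
\mathcal E_0(\psi,\Ab;Q_\ell)=-\tfrac{\kappa^2}{2}\int_{Q_\ell}|\psi|^4\,dx+(\text{boundary term}).
\end{equation*}
Rearranging gives $\tfrac1{|Q_\ell|}\int_{Q_\ell}|\psi|^4=-\tfrac{2}{\kappa^2|Q_\ell|}\mathcal E_0(\psi,\Ab;Q_\ell)+(\text{boundary})$. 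Combined with Step~1, this produces \eqref{eq-3D-op}; for part~(1), only the lower bound on $\mathcal E_0$ is used (valid for any solution, not just a minimizer), which yields the one-sided inequality \eqref{eq-3D-op'} after noting the sign.

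\smallskip

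\emph{The main obstacle} will be Step~2's boundary term on $\partial Q_\ell$, namely $\int_{\partial Q_\ell}\overline\psi\,\nu\cdot(\nabla-i\kappa H\Ab)\psi$, and more generally the fact that $Q_\ell$ is an \emph{arbitrary} interior cube with no natural Neumann condition. To handle this I would not integrate on a fixed cube but average over a family of concentric cubes (a Cauchy--Schwarz/co-area argument), trading the boundary integral for a controlled bulk term, and invoke a priori bounds $\|\psi\|_\infty\le 1$ and $\|(\nabla-i\kappa H\Ab)\psi\|_{L^2}\le C\kappa\|\psi\|_{L^2}$ (standard elliptic/maximum-principle estimates for \eqref{eq-3D-GLeq}) to bound the gradient on the sliding boundary. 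The resulting error is of order $\kappa^2|Q_\ell|\cdot\tfrac{1}{\ell\kappa}$, matching the $\tfrac{1}{\ell\kappa}$ term in the statement; the lower bound on $\ell$ (namely $\ell\ge R_0\kappa^{-1}$) is exactly what keeps this relative error small and is also what guarantees the cube is large enough for the $g$-density asymptotics of Step~1 to hold.
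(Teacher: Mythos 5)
Your overall architecture coincides with the paper's own proof: your Step~1 is exactly the pair Proposition~\ref{prop-lb} (lower bound, valid for any critical point) and Proposition~\ref{prop-ub} (upper bound, valid for minimizers), and your Step~2 identity, obtained by multiplying the first equation in \eqref{eq-3D-GLeq} by $\overline\psi$ and integrating by parts over $Q_\ell$, is precisely \eqref{eq-proof-op}. There are, however, two concrete gaps. The first is in Step~1: you assert the two-sided energy density bound with relative error $\kappa^{-1/2}+(\ell\kappa)^{-1}$ for all $\ell$ up to $\tfrac12$, but the single-cube argument you invoke cannot produce this. The gauge change replacing $\Ab$ by the linear potential $\Fb$ on $Q_\ell$ carries a linearization error proportional to $\ell$; this is why Propositions~\ref{prop-lb} and \ref{prop-ub} have error $C\left(\ell+\frac1{\ell\kappa}\right)\kappa^2$ per unit volume, and for $\ell$ of order one the term $\ell$ is not $O(\kappa^{-1/2})$. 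The missing idea, which is the opening step of the paper's proof, is the reduction to $\ell\leq\kappa^{-1/2}$: a cube with $\ell>\kappa^{-1/2}$ is subdivided into $(\ell/\ell')^3$ identical cubes of side $\ell'\in[\kappa^{-1/2}/2,\kappa^{-1/2})$, the estimate is proved on each subcube (where $\ell'+(\ell'\kappa)^{-1}\leq C\kappa^{-1/2}$), and the averages are summed. Without this reduction your Step~1, and hence the theorem for large cubes, does not follow from the local energy estimates you appeal to.

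The second gap is your treatment of the boundary term $\int_{\partial Q_\ell}\overline\psi\,\nu\cdot(\nabla-i\kappa H\Ab)\psi\,d\sigma$. The sliding-cube/co-area device with the $L^2$ bound $\|(\nabla-i\kappa H\Ab)\psi\|_{L^2}\leq C\kappa\|\psi\|_{L^2}$ creates a trade-off it cannot close: averaging over a family of relative width $\epsilon$ controls the boundary integral only on some nearby cube $Q_{\ell^*}$, and since the conclusion concerns the \emph{fixed} cube $Q_\ell$, you must compare $\int_{Q_{\ell^*}}|\psi|^4$ with $\int_{Q_\ell}|\psi|^4$, incurring a relative volume-mismatch error $O(\epsilon)$ (only $|\psi|\leq 1$ is available), while the Cauchy--Schwarz estimate of the boundary term degrades as $\epsilon\to0$. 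Optimizing $\epsilon$ with your stated ingredients yields a relative error which is a fractional power of $(\ell\kappa)^{-1}$ (e.g.\ $(\ell\kappa)^{-1/2}$, or $(\ell\kappa)^{-2/3}$ with a sharper localization), strictly worse than the claimed $(\ell\kappa)^{-1}$; at the borderline $\ell\sim\kappa^{-1/2}$ this is $\kappa^{-1/4}$ or $\kappa^{-1/3}$ instead of $\kappa^{-1/2}$. The paper avoids sliding altogether: the uniform a priori estimate \eqref{eq-est1} of Theorem~\ref{thm-3D-ad-est}, namely $\|(\nabla-i\kappa H\Ab)\psi\|_{C(\overline{\Omega})}\leq C\sqrt{\kappa H}\,\|\psi\|_{L^\infty(\Omega)}$, together with \eqref{eq-psi<1}, bounds the boundary integral on the fixed cube directly by $C\kappa\ell^2$, i.e.\ relative error exactly $(\ell\kappa)^{-1}$. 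So the missing ingredient is a pointwise (not $L^2$) gradient bound; alternatively your sliding argument can be repaired with a Caccioppoli estimate on an annulus of width $\sim\kappa^{-1}$, but that is not what your cited estimates provide.
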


\begin{rem}
Theorem~\ref{thm-3D-op} still holds true (with a different error term) if cubes are replaced by more general domains that can be approximated by cubes of side-length $\ell$ and $\kappa^{-1}\ll\ell\ll1$ as $\kappa\to\infty$. In particular, it is true for balls of radius $\ell$.
\end{rem}

\begin{rem}\label{rem-1}
The function $g$ is a continuous increasing function, and $g(b)=0$ for all $b\geq 1$ (see Section~\ref{sec-LE2D}). Furthermore, there exists a universal constant $\alpha\in(0,\frac12)$ such that,
$$\forall~b\in[0,1]\,,\quad  \alpha(b-1)^2\leq |g(b)|\leq \frac12(b-1)^2\,.$$
Theorem~\ref{thm-3D-op} then shows that as the applied magnetic field approaches $H_{C_2}$ (i.e. $H\sim b\kappa$ and $b\to1_-$), superconductivity decreases in the bulk like $\sqrt{1-b}$.
\end{rem}

\begin{rem}
In connection with the result of Theorem~\ref{thm-3D-op},
Almog \cite{Al} proves that if the magnetic field is such that $H=b\kappa+o(\kappa)$ with $b\leq 1$, and if  $(\psi,\Ab)$ is a  solution of the Ginzburg-Landau equations \eqref{eq-3D-GLeq},  then
\begin{equation}\label{eq-al}
\frac1{|\Omega|}\int_\Omega|\psi|^4\,dx\leq \frac{C}{\kappa}+2\left[1-\frac{H}{\kappa}\right]_+^2\,.\end{equation}
The estimate \eqref{eq-3D-op'} we give in Theorem~\ref{thm-3D-op} is an improvement of \eqref{eq-al} in the sense that
 it estimates the average of $|\psi|^4$ on small cubes and to leading order.

In the case $b=1$, the estimate of the ground state energy in \eqref{eq-FKP} yields an estimate stronger than the ones in \eqref{eq-al} and Theorem~\ref{thm-3D-op}. We refer to \cite{FKP} for the precise statement.
\end{rem}

\begin{rem}
In light of the result of Theorem~\ref{thm-3D-op}, it is natural to expect that for any solution
 $(\psi,\Ab)$ of the Ginzburg-Landau equation \eqref{eq-3D-GLeq}, the order parameter
$\psi$ satisfies
$$\|\psi\|_{L^\infty(\omega_\kappa)}\leq C\max\left(\left|\frac{\kappa}{H}-1\right|^{1/2},\kappa^{-\delta}\right)\,,$$
 where $C$ is a constant, $\delta\in(0,1)$  and $\omega_\kappa=\{x\in\Omega~:~{\rm dist}(x,\partial\Omega)\geq \kappa^{-\delta}\}$.
Bounds of this type were obtained for the two dimensional equations in \cite{FH-jems, FK, FK1}, but we are not at present able to obtain them in 3D.
\end{rem}

The general technique used in this paper is mainly based on the methods of \cite{FH-b}. After determining {\it a priori} estimates of the solutions, we reduce the problem, via localization techniques,   to that of a three dimensional  model
problem. The latter  is linked to a  two-dimensional problem, which has been studied in \cite{AS, SS02}.

The main concern in \cite{AS, SS02} was the leading order term in the ground state energy for the two-dimensional model problem, without attention to estimates of the remainder terms. However,
precise estimates of those remainder terms are necessary in order to  obtain the correct leading order term
in the energy \eqref{eq-3D-gs}.

We perform a very careful analysis of the two dimensional limiting problem thereby obtaining estimates that, on the one hand, are stronger than those appearing in \cite{AS, SS02}, and on the other hand, when inserted in the original functional in \eqref{eq-3D-GLf}, their contribution to the remainder  terms is negligeable compared with the expected  leading order term in the energy.  On a technical level, a key tool in the control of remainder terms is an $L^\infty$-bound for solutions of a reduced Ginzburg-Landau equation in the plane, which was obtained in \cite{FH-jems}.

Let us point out that, using less sophisticated tools than the ones used in \cite{AS, SS02}, we recover in a unified approach
all the results obtained   in \cite{AS} concerning the limiting problem.  It seems that our approach works for  limiting energies  arising in other contexts.
In \cite{FKP}, through the same approach, we  identify the limiting problem of the {\it surface} regime.

The paper is organized as follows. In Section~\ref{sec-LE}, we analyze the limiting energy.
In Section~\ref{sec-apest}, we give
asymptotic {\it a priori} estimates on solutions of the Ginzburg-Landau equation \eqref{eq-3D-GLeq}. In Section~\ref{sec-locest}, we prove local energy estimates in small cubes for minimizers of the energy in \eqref{eq-3D-GLf}.
Section~\ref{sec-proofs} concludes with the proofs of the theorems announced in the introduction. Section~\ref{sec-hc2}
is devoted to additional estimates related to the case when $H$ is close to $\kappa$, which will be used in \cite{FKP}.
\medskip

\paragraph{\it Remark on notation:}
\begin{itemize}
\item Throughout the paper, we write $\mathcal E$ for the functional $\mathcal E^{\rm 3D}$ in \eqref{eq-3D-GLf}.
\item The letter $C$ denotes a positive constant that is independent of the parameters $\kappa$ and $H$, and whose value  may change from line to line.
\item If $a(\kappa)$ and $b(\kappa)$ are two positive functions, we write $a(\kappa)\ll b(\kappa)$ if $a(\kappa)/b(\kappa)\to0$ as $\kappa\to\infty$.
\item If $a(\kappa)$ and $b(\kappa)$ are two functions with $b(\kappa)\not=0$, we write $a(\kappa)\sim b(\kappa)$ if $a(\kappa)/b(\kappa)\to1$ as $\kappa\to\infty$.
\item If $a(\kappa)$ and $b(\kappa)$ are two positive functions, we write $a(\kappa)\approx b(\kappa)$
if there exist positive constants $c_1$, $c_2$ and $\kappa_0$ such that $c_1b(\kappa)\leq a(\kappa)\leq c_2b(\kappa)$ for all $\kappa\geq\kappa_0$.
\item If $x\in\R$, we let $[x]_+=\max(x,0)$.
\end{itemize}

\section{The limiting energy}\label{sec-LE}

This section contains the study of the large area/volune limit of Ginzburg-Landau functionals with constant magnetic field in 2 and 3 dimensions. These models are fundamental for the results of the paper. In the $2$D case we compare different boundary conditions (Dirichlet in Section~\ref{sec-LE2D} and periodic in Section~\ref{sec:periodic}) and introduce the Abrikosov energy corresponding to restricting the functional to periodic functions in the lowest Landau band in Section~\ref{sec:Abrikosov}.
Finally, in Section~\ref{sec-LE3D} we reduce the $3$ dimensional case to the $2$D one.

\subsection{Two-dimensional limiting energy}\label{sec-LE2D}
\subsubsection{Reduced Ginzburg-Landau functional and thermodynamic limit}\

Given a constant $b\geq 0$ and an open set $\mathcal
D\subset \R^2$, we define the following Ginzburg-Landau energy,
\begin{equation}\label{eq-LF-2D}
G_{b,\mathcal D}(u)=\int_{\mathcal D}\left(b|(\nabla-i\Ab_0)u|^2
-|u|^2+\frac1{2}|u|^4\right)\,dx\,.
\end{equation}
Here $\Ab_0$ is the canonical magnetic potential,
\begin{equation}\label{eq-hc2-mpA0}
\Ab_0(x_1,x_2)=\frac12(-x_2,x_1)\,,\quad\forall~x=(x_1,x_2)\in
\R^2\,.\end{equation}
We will consider the functional $G_{b,\mathcal D}$ first with Dirichlet and later with (magnetic) periodic boundary conditions. It will be clear from the context what is meant.

Consider the functional with Dirichlet boundary conditions and for
$b>0$. If the domain $\mathcal D$ is bounded, completing  the square
in the expression of $G_{b,\mathcal D}$ shows that $G_{b,\mathcal
D}$ is bounded from below. Thus, starting from a minimizing
sequence, it is easy to check that $G_{b,\mathcal D}$ has a
minimizer. A standard application of the maximum principle  shows
that, if $u$ is any minimizer of $G_{b,\mathcal D}$, then
\begin{align}
\label{eq:MaxPr}
|u|\leq 1,\qquad \text{  in } {\mathcal D},
\end{align}
see e.g. \cite{SS}.

Given $R>0$, we denote by  $K_R=(-R/2,R/2)\times(-R/2,R/2)$ a square of side length
$R$. Let,
\begin{equation}\label{eq-m0(b,R)}
m_0(b,R)=\inf_{u\in H^1_0(K_R;\C)}
G_{b,K_R}(u)\,.
\end{equation}

The main concern of this section is the proof of  the following theorem.

\begin{thm}\label{thm-thmd-AS}
Let $m_0(b,R)$ be as defined in \eqref{eq-m0(b,R)}.
\begin{enumerate}
\item For all $b\geq 1$ and $R>0$, we have $m_0(b,R)=0$.
\item If $b\in[0,\infty)$, there exists a constant $g(b)\leq 0$ such that,
\begin{equation}\label{eq-g(b)}
\quad g(b)=\lim_{R\to\infty}\frac{m_0(b,R)}
{|K_R|}\,.
\end{equation}
\item The function $[0,+\infty)\ni b\mapsto g(b)$ is continuous,
non-decreasing, concave and its range is the interval $[-\frac12,0]$.
\item There exists a universal constant $\alpha\in(0,\frac12)$ such
  that,
\begin{equation}\label{eq-prop-g(b)}\forall~b\in[0,1]\,,\quad
  \alpha(b-1)^2\leq|g(b)|\leq \frac12(b-1)^2\,.
\end{equation}
\item There exist universal constants $C$ and $R_0$ such that,
\begin{equation}\label{eq-remainder}
\forall~R\geq R_0\,,~\forall~b\in[0,1]\,,\quad g(b)\leq
\frac{m_0(b,R)}{R^2}\leq g(b)+\frac{C}{R}\,.\end{equation}
\end{enumerate}
\end{thm}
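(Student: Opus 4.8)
The plan is to prove the five assertions essentially in order, resting everything on two ingredients: the diamagnetic/Landau lower bound $\int_{\mathcal D}|(\nabla-i\Ab_0)u|^2\,dx\ge\int_{\mathcal D}|u|^2\,dx$, valid with Dirichlet data because the bottom of the spectrum of the constant-field magnetic Laplacian equals $1$; and an approximate subadditivity of $R\mapsto m_0(b,R)$ under tiling of squares. For part (1), for $b\ge1$ the Landau bound gives $b\int|(\nabla-i\Ab_0)u|^2-\int|u|^2\ge0$, whence $G_{b,K_R}(u)\ge\frac12\int|u|^4\ge0$; since $G_{b,K_R}(0)=0$ this forces $m_0(b,R)=0$. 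The same bound yields the upper half of part (4): completing the square in $t=|u|^2$,
\[
G_{b,K_R}(u)\ge\int_{K_R}\Big((b-1)|u|^2+\tfrac12|u|^4\Big)\,dx\ge-\tfrac12(1-b)^2|K_R|,
\]
so $m_0(b,R)\ge-\frac12(1-b)^2R^2$, giving $g(b)\ge-\frac12(b-1)^2$, i.e. $|g(b)|\le\frac12(b-1)^2$.

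For part (2) I would establish $m_0(b,nR)\le n^2m_0(b,R)$ by tiling $K_{nR}$ with $n^2$ translated copies of $K_R$ and pasting the corresponding Dirichlet minimizers, extended by zero, after correcting each translate by the gauge factor that restores the potential to $\Ab_0$. Hence $m_0(b,nR)/(nR)^2\le m_0(b,R)/R^2$, and a Fekete-type argument, together with the a priori bounds $-\frac12\le m_0(b,R)/R^2\le0$ from part (4) and \eqref{eq:MaxPr}, shows that $g(b):=\lim_{R\to\infty}m_0(b,R)/R^2$ exists and equals $\inf_R m_0(b,R)/R^2$. This identification is exactly the lower bound $g(b)\le m_0(b,R)/R^2$ of part (5). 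For part (3), monotonicity and concavity are structural: for fixed $u$ the map $b\mapsto G_{b,K_R}(u)$ is affine and non-decreasing, so $m_0(b,R)$, an infimum of such maps, is concave and non-decreasing in $b$, and these properties pass to $g$. Concavity forces continuity on $(0,\infty)$; continuity at $0$ and the value $g(0)=-\frac12$ come from the pointwise minimization of $\int(-|u|^2+\frac12|u|^4)$, and with $g(b)=0$ for $b\ge1$ this pins the range to $[-\frac12,0]$.

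The heart of the matter is the upper half of part (5) and the lower half of part (4), both demanding trial states with controlled boundary loss. For the remainder estimate I would take a near-optimal configuration on a large square, truncate it to $K_R$ by a cutoff equal to $1$ off a boundary layer of width $O(1)$, and check that the bulk contributes $g(b)R^2+o(R^2)$ while the cutoff and magnetic cross terms contribute $O(R)$, giving $m_0(b,R)\le g(b)R^2+CR$. For the lower bound on $|g|$ I would test with $u=\varepsilon\chi\phi$, where $\phi$ is a fixed normalized lowest-Landau-level state (so $\int|(\nabla-i\Ab_0)\phi|^2=\int|\phi|^2$) cut off to $K_R$, and optimize in $\varepsilon$, obtaining
\[
g(b)\le-\tfrac12(1-b)^2\,\frac{\|\phi\|_2^4}{\|\phi\|_4^4\,|K_R|}\,(1+o(1)).
\]
Since Cauchy--Schwarz gives $\|\phi\|_2^4\le\|\phi\|_4^4\,|K_R|$, strictly for nonconstant $\phi$, the ratio is a universal constant $2\alpha\in(0,1)$, proving $|g(b)|\ge\alpha(b-1)^2$ with $\alpha<\frac12$.

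The main obstacle I anticipate is securing the sharp $O(R)$ boundary error in part (5), rather than $O(R\log R)$ or worse. This needs the comparison configuration to be genuinely quasi-periodic, so that the cutoff costs only one layer of area $\sim R$, and it needs careful bookkeeping of the gauge phases when transplanting the near-minimizer into $K_R$. This is precisely where the comparison with periodic boundary conditions and the Abrikosov energy developed in Sections~\ref{sec:periodic}--\ref{sec:Abrikosov} should take over, replacing the more delicate direct cut-and-paste estimates of \cite{AS,SS02}.
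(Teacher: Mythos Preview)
Your outline is close to the paper's for parts (1)--(4) and for the lower half of (5). The subadditivity $m_0(b,nR)\le n^2 m_0(b,R)$ by magnetic--periodic pasting is exactly the paper's Step~2, and together with the (easy) monotonicity of $R\mapsto m_0(b,R)$ and the a priori bound $-\tfrac12\le m_0(b,R)/R^2\le0$, a Fekete-type argument does give that $g(b)=\lim_{R\to\infty}m_0(b,R)/R^2=\inf_R m_0(b,R)/R^2$; this simultaneously yields the lower bound in (5). The paper organizes this differently: it proves an \emph{approximate superadditivity} $m_0(b,nR)/(nR)^2\ge m_0(b,R)/R^2-C/R$ (Lemma~\ref{lem-monton'}) and feeds it into the abstract Lemma~\ref{gen-lem}, obtaining the existence of the limit and the \emph{upper} bound in (5) in one stroke, while your subadditivity is used afterwards for the \emph{lower} bound. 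Either organization is fine.

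The genuine gap is your argument for the upper half of (5). Truncating ``a near-optimal configuration on a large square'' to $K_R$ and declaring the bulk contributes $g(b)R^2$ presupposes that the energy density of a near-minimizer is essentially constant, which is precisely what is unproved at this stage; and the periodic/Abrikosov tools you point to are not what the paper uses here. The paper's device is the Euler--Lagrange identity: if $u_{nR}$ minimizes $G_{b,K_{nR}}$ then $m_0(b,nR)=-\tfrac12\int_{K_{nR}}|u_{nR}|^4$, and an integration by parts against the equation gives, for any cutoff $\chi$ supported in a subsquare of side $R$,
\[
G_{b,K_{nR}}(\chi u_{nR})\le-\tfrac12\int_{\mathcal K_{R,j}}|u_{nR}|^4+C(b+1)R.
\]
Since $G_{b,K_{nR}}(\chi u_{nR})\ge m_0(b,R)$ by translation invariance, summing over the $n^2$ subsquares yields $n^2 m_0(b,R)\le m_0(b,nR)+Cn^2 R$, i.e.\ $m_0(b,R)/R^2\le m_0(b,nR)/(nR)^2+C/R$, and letting $n\to\infty$ gives $m_0(b,R)/R^2\le g(b)+C/R$ with a universal $C$. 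The point is that $\int|u|^4$ decomposes additively over a tiling even though the full energy does not; this replaces your truncation heuristic and delivers the sharp $O(1/R)$ remainder without invoking Sections~\ref{sec:periodic}--\ref{sec:Abrikosov}.
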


The major part of Theorem~\ref{thm-thmd-AS} is obtained  by Sandier-Serfaty
\cite{SS02} and Aftalion-Serfaty \cite[Lemma~2.4]{AS}. However, the
estimate in \eqref{eq-remainder} is new.  We give an alternative proof
of Theorem~\ref{thm-thmd-AS} through a direct approach\,\footnote{In \cite{AS, SS02}, the authors link the ground state energy $m_0(b,R)$ to that of a full Ginzburg-Landau energy (where the magnetic field is an unknown), then for the latter, they analyze a simultaneous
limit of  both large area and large Ginzburg-Landau parameter.}.

An important key-ingredient in the proof of Theorem~\ref{thm-thmd-AS} is the following abstract lemma.

\begin{lem}\label{gen-lem}
Consider a decreasing function $d:(0,\infty)\to(-\infty,0]$ such that the function\break
$f:(0,\infty)\ni\ell\mapsto \frac{d(\ell)}{\ell^2}\in\R$
is bounded.
\begin{enumerate}
\item
Suppose that there exist constants $C>0$ and $\ell_0>0$ such that  the estimate
\begin{equation}\label{eq-gen-lem-ass}
f(n\ell) \geq f((1+a)\ell)-C\left(a+\frac1{a^2\ell^2}\right)\,,\end{equation}
holds true for all $a\in(0,1)$,
$n\in\mathbb N$ and $\ell\geq\ell_0$.

Then $f(\ell)$ has a limit $A$ as $\ell\to\infty$. Furthermore, for all $\ell\geq 2\ell_0$, the following estimate holds true,
\begin{equation}\label{eq-gen-lem}
f(\ell)\leq A+\frac{2C}{\ell^{2/3}}\,.
\end{equation}
\item
Suppose that there exist constants $C>0$ and $\ell_0>0$ such that  the estimate
\begin{equation}\label{eq-gen-lem-ass'}
f(n\ell) \geq f(\ell)-\frac{C}{\ell}\,,\end{equation}
holds true for all
$n\in\mathbb N$ and $\ell\geq\ell_0$.

Then $f(\ell)$ has a limit $A$ as $\ell\to\infty$. Furthermore, for all $\ell\geq 2\ell_0$, the following estimate holds true,
\begin{equation}\label{eq-gen-lem'}
f(\ell)\leq A+\frac{2C}{\ell}\,.
\end{equation}
\end{enumerate}
\end{lem}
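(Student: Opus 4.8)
This is a subadditivity-type result. Let me think about what's being claimed.The plan is to prove this abstract lemma as a quantitative subadditivity (or rather super-additivity) argument. The key structural observation is that the hypothesis \eqref{eq-gen-lem-ass} (resp. \eqref{eq-gen-lem-ass'}) controls $f$ at a large scale $n\ell$ by its value at a nearby scale, with a loss that is summable or small. Since $d$ is decreasing and $f=d/\ell^2$ is bounded, $f$ is bounded below; call $A=\liminf_{\ell\to\infty}f(\ell)$, which is finite. The entire game is to show that $A$ is in fact the genuine limit and to extract the quantitative rate in \eqref{eq-gen-lem} (resp. \eqref{eq-gen-lem'}). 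I would treat part (2) first since it is simpler, then adapt the method to part (1).

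For part (2): fix $\ell\geq 2\ell_0$ and let $A=\liminf f$. Choose any scale $L$ with $f(L)$ close to $A$; the idea is to dominate $f(\ell)$ from above by $f(nL)$ for suitable $n$ and then push $nL$ to a scale realizing values near $A$. Concretely, I would first establish that for $L\geq\ell_0$ the sequence $f(nL)$ converges to the same limit $A$ as $n\to\infty$: the monotonicity of $d$ gives, for $n\leq m$, that $d(mL)\leq d(nL)$, hence comparing $f$ at consecutive integer multiples yields a Cauchy-type control once one divides by $(nL)^2$. Combined with \eqref{eq-gen-lem-ass'}, which reads $f(nL)\geq f(L)-C/L$, one gets $A\geq f(L)-C/L$, i.e. $f(L)\leq A+C/L$ for every $L\geq\ell_0$. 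The factor $2$ in \eqref{eq-gen-lem'} provides slack to absorb the passage from the subsequence $\{nL\}$ to arbitrary large scales via the monotonicity of $d$. This already gives both the existence of the limit and the stated rate.

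For part (1): the difficulty is that \eqref{eq-gen-lem-ass} compares $f(n\ell)$ not to $f(\ell)$ but to $f((1+a)\ell)$, with a two-term error $a+1/(a^2\ell^2)$, and one must optimize over $a\in(0,1)$. I would fix $\ell\geq 2\ell_0$, let $L=(1+a)\ell$, and for large $n$ use that $f(nL)\to A$ (established as in part (2) using monotonicity of $d$) to deduce $A\geq f((1+a)\ell)-C(a+1/(a^2\ell^2))$. Then I would relate $f((1+a)\ell)$ back to $f(\ell)$: since $d$ is decreasing, $d((1+a)\ell)\leq d(\ell)$, so $f((1+a)\ell)=d((1+a)\ell)/((1+a)\ell)^2\geq f(\ell)/(1+a)^2 - (\text{correction})$, using the boundedness of $f$ to control the $(1+a)^{-2}$ factor by something like $f(\ell)-Ca$. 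Substituting gives $f(\ell)\leq A+C(a+1/(a^2\ell^2))$ up to constants. The final step is the optimization: choosing $a\approx \ell^{-2/3}$ balances $a$ against $1/(a^2\ell^2)=\ell^{-2/3}$ and yields the rate $C\ell^{-2/3}$ in \eqref{eq-gen-lem}.

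The main obstacle I anticipate is not the optimization but the bookkeeping needed to convert the hypotheses, which only bound $f$ at \emph{integer multiples} $n\ell$, into a statement controlling $\liminf_{\ell\to\infty}f(\ell)$ over \emph{all} scales. This is where the monotonicity of $d$ (not of $f$) is essential: for an arbitrary large $\ell'$ one sandwiches it between consecutive multiples $nL\leq \ell'<(n+1)L$ and uses $d((n+1)L)\leq d(\ell')\leq d(nL)$ to transfer the limit along the subsequence to the full limit, with the discretization error controlled by $\ell'^{-2}$ times the (bounded) variation of $d$, hence absorbed into the stated rates. Keeping track of which constants are universal and ensuring the factor-of-two slack in \eqref{eq-gen-lem} and \eqref{eq-gen-lem'} genuinely covers these discretization losses is the only delicate point; everything else is routine.
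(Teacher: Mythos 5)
Your part (2) is essentially correct and follows the same route as the paper: monotonicity of $d$ plus boundedness of $f$ lets you sandwich an arbitrary large scale $\ell'$ between consecutive multiples $nL\leq\ell'<(n+1)L$ and conclude $\liminf_{n\to\infty}f(nL)=\liminf_{\ell\to\infty}f(\ell)=:A$ (your phrase ``$f(nL)$ converges to $A$'' overstates what is known at that stage---only the equality of liminfs is available---but only the liminf is used), after which \eqref{eq-gen-lem-ass'} gives $A\geq f(L)-C/L$ for every $L\geq\ell_0$, hence simultaneously the existence of the limit and the rate.

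Part (1), however, contains a genuine gap: the monotonicity inequality is applied in the wrong direction. From $d((1+a)\ell)\leq d(\ell)$ you get, after dividing by the positive number $((1+a)\ell)^2$, that $f((1+a)\ell)\leq f(\ell)/(1+a)^2$, i.e.\ an \emph{upper} bound on $f((1+a)\ell)$; your step asserts the reverse inequality $f((1+a)\ell)\geq f(\ell)/(1+a)^2-\mathcal{O}(a)$. The lower bound you actually need, $f((1+a)\ell)\geq f(\ell)-Ca$, is false for a general decreasing $d$ with $f$ bounded: take $d\equiv 0$ on $(0,T]$ and let $d$ decrease linearly to the value $-M(1+a)^2T^2$ on $[T,(1+a)T]$, then stay constant; $f$ remains bounded, yet $f(T)=0$ while $f((1+a)T)=-M$, a drop of size $M$ rather than $\mathcal{O}(a)$. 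The hypothesis \eqref{eq-gen-lem-ass} does not rescue this either, since with $n=1$ it again only bounds $f((1+a)\ell)$ from \emph{above} by $f(\ell)+C\left(a+\frac{1}{a^2\ell^2}\right)$. The repair is simple and is exactly what the paper does: no comparison between $f((1+a)\ell)$ and $f(\ell)$ is needed at all. Once you know (by your sandwiching argument, applied to multiples of the base scale) that $A\geq f((1+a)L)-C\left(a+\frac{1}{a^2L^2}\right)$ for every $L\geq\ell_0$ and $a\in(0,1)$, apply this with $L=\ell/(1+a)$---admissible because $\ell\geq 2\ell_0$ forces $L\geq\ell_0$---so that $(1+a)L$ is precisely the target scale $\ell$, and then choose $a=\ell^{-2/3}$ to balance the two error terms. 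With that substitution in place of the faulty comparison, your outline closes and coincides with the paper's proof.
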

 \begin{proof}
We prove the first conclusion of the lemma. As the proof of the second conclusion is similar, we do not write the details.

We denote by   $A=\displaystyle\limsup_{\ell\to\infty}f(\ell)$. We know that $A$ is finite since the function $f$ is bounded.
Also, $A\leq 0$ since $d$ is non-positive.
We will prove that $\displaystyle\liminf_{\ell\to\infty}f(\ell)\geq A$. This will give us that $f(\ell)$ has limit $A$ as $\ell\to\infty$.

 Suppose that $\varepsilon\in(0,1)$ is a given constant.   We may select $\ell_0\geq1/(\varepsilon^2)$ such that
$f(\ell_0)\geq A-\varepsilon$, and the estimate in \eqref{eq-gen-lem-ass} holds true for all $\ell\geq\ell_0$.

For each $\ell\in(\ell_0/(1+\varepsilon),\infty)$, let $n\in\mathbb N$
be the unique natural number satisfying $n\ell_0\leq(1+\varepsilon)\ell<(n+1)\ell_0$.
By monotonicity of $\ell\mapsto d(\ell)$, we have, $d(\ell)\geq d\big((n+1)\ell_0/(1+\varepsilon)\big)$.
By definition of $f(\ell)$ we get,
$$f(\ell)\geq \left(\frac{(n+1)\ell_0}{(1+\varepsilon)\ell}\right)^2\,f\left(\frac{(n+1)\ell_0}{1+\varepsilon}\right)\,.$$
 Using the bound in \eqref{eq-gen-lem-ass}
with $a=\varepsilon$, we deduce that,
$$f(\ell)\geq
\left(\frac{(n+1)\ell_0}{(1+\varepsilon)\ell}\right)^2\left[f(\ell_0)-C\left(\varepsilon+\frac{2}{\varepsilon^2\ell_0^2}\right)\right]\,.$$
We notice that by our assumption on $n$,
\begin{align*}
\left(\frac{(n+1)\ell_0}{(1+\varepsilon)\ell}\right)^2&\leq \left(\frac{n\ell_0}{(1+\varepsilon)\ell}\right)^2+
\frac{1}{(1+\varepsilon)\ell}\left[
\frac{2n\ell_0}{(1+\varepsilon)\ell}+\frac{\ell_0^2}{(1+\varepsilon)\ell}\right]\\
&\leq1+\frac{2+\ell_0}{\ell}\,.
\end{align*}
Since $d(\ell_0)\leq 0$, we deduce that,
$$f(\ell)\geq \left(1+\frac{2+\ell_0}{\ell}\right)\left[f(\ell_0)-C\left(\varepsilon+\frac1{\varepsilon^2\ell_0^2}\right)\right]\,.$$
Consequently, we get by taking $\liminf$ on both sides above,
$$\liminf_{\ell\to\infty}f(\ell)  \geq A-C(\varepsilon+\varepsilon^2)\,.$$
Taking $\varepsilon\to0_+$, we get that
$$\liminf_{\ell\to\infty}f(\ell)  \geq A\,.$$
The estimate in \eqref{eq-gen-lem-ass} applied with arbitrary $n\in\mathbb N$ and $a\in(0,1)$ yields,
$$
f(nL)\geq f((1+a)L)-C\left(a+\frac1{a^2L^2}\right)\,,
$$
for all $L\geq \ell_0$. Making $n\to\infty$ we get,
$$
A\geq f((1+a)L)-C\left(a+\frac1{a^2L^2}\right)\,.
$$
Selecting $L=\ell/(1+a)$ and $a=\ell^{-2/3}$ we obtain that,
$$A\geq f(\ell)-\frac{2C}{\ell^{2/3}}\,.$$
This finishes the proof of assertion (1) in Lemma~\ref{gen-lem}.
\end{proof}

In the next lemma, we give rough bounds on the energy $m_0(b,R)$.
\begin{lem}\label{lem-rough'}
For all $b\geq0$ and $R>0$,  we have,
$$-\frac{[1-b]_+^2 R^2}2\leq m_0(b,R)\leq 0\,,$$
where $m_0(b,R)$ is defined in \eqref{eq-m0(b,R)}.

Furthermore, there exist  universal constants $\alpha\in(0,\frac12)$ and $C>0$ such that, for all $b\in[0,1)$ and $n\in\N$, we have,
\begin{equation}\label{eq-rough-ub'}
\frac{m_0(b,2\sqrt{2\pi} \,n)}{8\pi n^2}
\leq - \alpha(1-b)^2+\frac{C(1-b)^2}n\,.
\end{equation}
\end{lem}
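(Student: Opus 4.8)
The plan is to treat the two assertions of Lemma~\ref{lem-rough'} separately: the two-sided bound is elementary, while the refined upper bound \eqref{eq-rough-ub'} requires an Abrikosov-type trial state from the lowest Landau level.

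For the two-sided bound, the upper estimate $m_0(b,R)\le 0$ is immediate from $G_{b,K_R}(0)=0$ together with the definition \eqref{eq-m0(b,R)}. For the lower estimate I would use the spectral fact that the Dirichlet realisation of $(\nabla-i\Ab_0)^2$ on any bounded domain lies above its whole-plane bottom, which equals the constant field strength $\curl\Ab_0=1$. Concretely, extending $u\in H^1_0(K_R)$ by zero to $\R^2$ gives $\int_{K_R}|(\nabla-i\Ab_0)u|^2\ge\int_{K_R}|u|^2$. Inserting this into \eqref{eq-LF-2D} bounds $G_{b,K_R}(u)$ from below by $\int_{K_R}\big(-(1-b)|u|^2+\tfrac12|u|^4\big)\,dx$, and pointwise minimisation of $t\mapsto -(1-b)t+\tfrac12 t^2$ over $t=|u|^2\ge 0$ yields the value $-\tfrac12[1-b]_+^2$, hence $m_0(b,R)\ge -\tfrac12[1-b]_+^2R^2$. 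The very same computation gives $G_{b,K_R}\ge 0$ when $b\ge 1$, which, combined with $m_0\le 0$, reproves assertion (1) of Theorem~\ref{thm-thmd-AS}.

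For \eqref{eq-rough-ub'} I would build a trial function from the lowest Landau level. Fix a magnetic-periodic function $u_0$ in the kernel of $(\nabla-i\Ab_0)^2-1$ for the square lattice of period $\sqrt{2\pi}$ (one flux quantum per cell, so that $|u_0|$ is periodic with a single zero per cell). The square $K_{2\sqrt{2\pi}\,n}$ is then tiled by exactly $4n^2$ such cells, which explains the commensurate side length. I would take $u=s\,\chi u_0$, where $\chi$ is a real cut-off equal to $1$ away from $\partial K_{2\sqrt{2\pi}\,n}$ and vanishing on the boundary frame, and optimise the amplitude $s>0$. Since $u_0$ saturates the lowest band, the magnetic energy essentially equals the mass on the region where $\chi=1$, so there the bulk integrand reduces to $-(1-b)|u|^2+\tfrac12|u|^4$; optimising $s$ produces an energy density $-\alpha(1-b)^2$ with
\begin{equation*}
\alpha=\frac{\big(\langle|u_0|^2\rangle\big)^2}{2\,\langle|u_0|^4\rangle},
\end{equation*}
where $\langle\cdot\rangle$ denotes the per-cell average. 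By Cauchy--Schwarz $\alpha\le\tfrac12$, with equality only if $|u_0|$ is constant; since a nontrivial lowest-Landau-level function must vanish somewhere, $\alpha\in(0,\tfrac12)$, and it is universal because $u_0$ and the lattice are fixed. The leading term of the trial energy is thus $-\alpha(1-b)^2\cdot 8\pi n^2$.

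The main obstacle is controlling the Dirichlet boundary layer and showing that, after division by $8\pi n^2=|K_{2\sqrt{2\pi}\,n}|$, its contribution has the claimed size. Two effects compete: discarding the $O(n)$ cells meeting the frame removes their bulk energy $\sim\alpha(1-b)^2\times(\mathrm{perimeter})$, a genuine $(1-b)^2n$ loss; and the cut-off produces a localisation term $b\int|\nabla\chi|^2|u_0|^2$ together with the cross terms from the IMS-type identity for $(\nabla-i\Ab_0)(\chi u_0)$. A naive estimate of these, combined with the amplitude scaling $s^2\sim(1-b)$, only yields a remainder of order $(1-b)n$, which is weaker than the stated $(1-b)^2n$; obtaining the sharp power is the step I expect to require the most care, and would rely on exploiting the lowest-band equation $(\nabla-i\Ab_0)^2u_0=u_0$ and the periodicity of $|u_0|$ to extract the extra factor of $(1-b)$ (for instance by tuning the frame width), possibly balanced against the bulk deficit. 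I note that for the subsequent use — deducing $g(b)\le-\alpha(1-b)^2$ and hence the lower bound in Theorem~\ref{thm-thmd-AS}(4) by letting $n\to\infty$ — any remainder tending to $0$ would already suffice. Collecting the bulk term with the boundary error and dividing by $8\pi n^2$ then gives \eqref{eq-rough-ub'}.
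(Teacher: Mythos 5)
Your treatment of the two-sided bound is correct and is exactly the paper's argument: extend by zero, use that the bottom of the spectrum of $-(\nabla-i\Ab_0)^2$ on $\R^2$ equals $1$, and then minimize $t\mapsto -(1-b)t+\tfrac12 t^2$ pointwise (the paper completes the square, which is the same computation), with $u=0$ giving $m_0(b,R)\leq 0$.

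For \eqref{eq-rough-ub'} your construction is also the paper's own: the paper's ``proof'' is one sentence deferring to the test function of \cite{SS02}, followed by a sketch of precisely your lowest-Landau-band argument --- a nonzero periodic $v$ in the kernel of $P_R-1$ (the paper takes the cell $R=2\sqrt{2\pi}$, you take period $\sqrt{2\pi}$; both satisfy the flux quantization), amplitude $s\sim\sqrt{1-b}$, optimization giving per-cell energy $-\alpha(1-b)^2$ with your formula $\alpha=\langle|u_0|^2\rangle^2/(2\langle|u_0|^4\rangle)\in(0,\tfrac12)$, and then a cut-off to satisfy the Dirichlet condition on $K_{2\sqrt{2\pi}\,n}$. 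The difficulty you flag is genuine, and you should know that the paper does not resolve it either: it merely asserts that ``the last term comes from the localization error.'' Your accounting is the correct one. By the IMS formula and the equation $-(\nabla-i\Ab_0)^2u_0=u_0$, the cut-off cost is exactly $b\,s^2\int|\nabla\chi|^2|u_0|^2\,dx$, which for a frame of width $O(1)$ is of order $(1-b)\,n$, i.e. $(1-b)/n$ after dividing by $8\pi n^2$; widening the frame to width $w$ trades this ($\sim (1-b)n/w$) against a bulk deficit of order $(1-b)^2nw$, and the optimal $w\sim(1-b)^{-1/2}$ yields $(1-b)^{3/2}/n$ --- still short of the stated $(1-b)^2/n$ when $b$ is close to $1$. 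In particular, the extra factor of $(1-b)$ cannot be extracted by any multiplicative cut-off of the periodic state, since the localization term is positive and cannot cancel against the (also positive) bulk deficit; the lowest-band equation is already fully used in the IMS identity and gives no further smallness. So what your construction actually proves (and what the paper's sketch proves) is \eqref{eq-rough-ub'} with remainder $C(1-b)/n$, improvable to $C(1-b)^{3/2}/n$. As you correctly observe, this weaker form is sufficient for the only use made of \eqref{eq-rough-ub'} in the paper, namely Step~5 of the proof of Theorem~\ref{thm-thmd-AS}, where one sends $n\to\infty$ at fixed $b$ to conclude $g(b)\leq-\alpha(1-b)^2$; any remainder vanishing as $n\to\infty$ does that job. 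If one insists on the literal $(1-b)^2/n$ remainder, it would have to come out of the explicit test-function computation of \cite{SS02} to which the paper appeals, not out of the localization argument sketched there and in your proposal.
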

\begin{proof}
Let $u\in H^1_0(K_R)$. We extend $u$ to
a function $\widetilde u\in H^1(\R^2)$ by setting $\widetilde u=0$ in $\R^2\setminus K_R$. Then,
$$G_{b,K_R}(u)=\int_{\R^2}\left(b|(\nabla-i\Ab_0)\widetilde u|^2-|\widetilde u|^2+\frac12|\widetilde u|^4\right)\,dx\,.$$
Recall that the lowest
lowest eigenvalue of the Schr\"odinger operator with constant magnetic field $-(\nabla-i\Ab_0)^2$ in $L^2(\R^2)$
is equal to $1$. So, we get by the variational principle,
\begin{align*}
G_{b,K_R}(u)\geq \int_{\R^2}\left((b-1)|\widetilde u|^2+\frac12|\widetilde u|^4\right)\,dx
=\int_{K_R} \left((b-1) |u|^2 + \frac{1}{2} |u|^4\right) \,dx\,.\end{align*}
Since $u\in H^1_0(K_R)$ was arbitrary this clearly gives  the lower bound of the lemma when $b\geq1$. When $0<b<1$, we can complete the square to get,
\begin{align*}
\int_{K_R} \left((b-1) |u|^2 + \frac{1}{2} |u|^4\right) \,dx&=\frac12\int_{K_R} \left(\big((b-1) -|u|^2\big)^2
 - (1-b)^2\right) \,dx\\
&\geq -\frac12(1-b)^2R^2\,,
\end{align*}
which thereby finishes the proof of the lower bound of the lemma.

Since $G_{b,K_R}(0)=0$, we get the trivial upper bound $m_0(b,R)\leq 0$. This proves the first estimate in the lemma.

The proof of the estimate in \eqref{eq-rough-ub'} consists of computing the energy of a test function constructed in \cite{SS02}.
A similar bound (more in line with the techniques of the present
paper) can be obtained by using the `lowest Landau band' functions
introduced below. We will briefly sketch such a calculation. With
$R=2\sqrt{2\pi}$ the space $L_R$ defined in
Proposition~\ref{prop-hc2-poperator} is non-empty and contains a
non-zero magnetic periodic function $v\in E_{R=2\sqrt{2\pi}}$\,,
where the space $E_{R}$ is defined in \eqref{eq-hc2-space1}. By
defining $u=\lambda\sqrt{1-b}\,v/\|v\|_{L^2(K_R)}$, we may select
$\lambda>0$ sufficiently small and a constant $\alpha\in(0,1/2)$
such that,
\begin{align} G_{b,K_R}(u) = -\alpha (1-b)^2 <0.
\end{align}
We can take the magnetic periodic function $u$ and cut it down to a
box of size $nR$ (in order to satisfy the Dirchlet boundary)
condition.
Upon calculating the energy on $K_{nR}$ of this function
we get the bound  \eqref{eq-rough-ub'}, where the last term comes
from the localization error.

This finishes the proof of the lemma.
\end{proof}

The next lemma establishes monotonicity properties of $m_0(b,R)$ with respect to $R$.

\begin{lem}\label{lem-monton'}
For all $b>0$, the function $R\mapsto m_0(b,R)$ defined in \eqref{eq-m0(b,R)} is monotone non-increasing.

Furthermore, there exist universal constants $C>0$ and $\ell_0>0$ such that, for all $b>0$  and $\ell\geq\ell_0$, we have the estimate,
$$\frac{m_0(b,nR)}{ (nR)^2}\geq \frac{m_0(b,R)}{R^2}-\frac{C(b+1)}{R}\,.$$
\end{lem}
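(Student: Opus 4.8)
The two assertions are of quite different character, and I would prove them separately. The monotonicity is elementary: if $0<R_1<R_2$ then $K_{R_1}\subset K_{R_2}$, and extending any $u\in H^1_0(K_{R_1};\C)$ by zero produces $\widetilde u\in H^1_0(K_{R_2};\C)$ with $G_{b,K_{R_2}}(\widetilde u)=G_{b,K_{R_1}}(u)$, since the integrand in \eqref{eq-LF-2D} vanishes where $\widetilde u=0$. Taking the infimum over $u$ gives $m_0(b,R_2)\leq m_0(b,R_1)$, so $R\mapsto m_0(b,R)$ is non-increasing.

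For the subadditive estimate (where I read the displayed hypothesis as $R\geq\ell_0$ with $n\in\N$), the plan is a localization/cutting argument. I first record the magnetic translation invariance of the functional: since $\Ab_0(x)=\Ab_0(x-a)+\Ab_0(a)$ with $\Ab_0(a)$ a constant vector, the gauge transformation $u\mapsto e^{i\Ab_0(a)\cdot x}u(\cdot-a)$ preserves both $|(\nabla-i\Ab_0)u|$ and $|u|$, so $\inf\{G_{b,Q}(v):v\in H^1_0(Q;\C)\}=m_0(b,R)$ for every translate $Q$ of $K_R$. I then let $u$ be a minimizer of $G_{b,K_{nR}}$, which exists and satisfies $|u|\leq1$ by \eqref{eq:MaxPr}, so that $G_{b,K_{nR}}(u)=m_0(b,nR)$, and I partition $K_{nR}$ into the $n^2$ subboxes $Q_1,\dots,Q_{n^2}$ of side length $R$.

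Next I would fix a smooth $\chi$ on $K_{nR}$, equal to $1$ away from the internal walls of the partition and vanishing within a fixed buffer of width $O(1)$ of those walls; the assumption $R\geq\ell_0$ (for a universal $\ell_0>2$) ensures each subbox retains a nonempty region where $\chi=1$. Put $v=\chi u$. Because $u=0$ on $\partial K_{nR}$ and $\chi=0$ on the internal walls, the restriction of $v$ to each $Q_j$ lies in $H^1_0(Q_j;\C)$, whence $G_{b,K_{nR}}(v)=\sum_j G_{b,Q_j}(v)\geq n^2 m_0(b,R)$ by the invariance above. To compare $G_{b,K_{nR}}(v)$ with $m_0(b,nR)$, I would treat the kinetic term by the IMS localization formula for the partition of unity $\chi^2+(1-\chi^2)=1$, bounding the kinetic energy of $v$ by that of $u$ plus $b\int(|\nabla\chi|^2+|\nabla\sqrt{1-\chi^2}|^2)|u|^2$; with $|\nabla\chi|=O(1)$, $|u|\leq1$, and buffer area $O(n^2R)$ this is $\leq Cbn^2R$. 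For the potential term, the pointwise inequality $-|v|^2+\tfrac12|v|^4\leq -|u|^2+\tfrac12|u|^4+(1-\chi^2)|u|^2$ (valid since $0\leq\chi\leq1$ and $|u|\leq1$) contributes at most the buffer area $\leq Cn^2R$. Summing, $n^2 m_0(b,R)\leq G_{b,K_{nR}}(v)\leq m_0(b,nR)+C(b+1)n^2R$, and dividing by $(nR)^2$ gives the stated bound.

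The main obstacle is the kinetic comparison: the product rule applied to $(\nabla-i\Ab_0)(\chi u)$ generates a cross term that is not obviously of lower order, and the IMS identity is exactly what removes it, leaving only the quadratic localization error supported in the thin buffer. The surrounding care is purely dimensional bookkeeping — keeping the buffer width independent of both $R$ and $b$ so that the per-subbox error is $O((b+1)R)$ (hence the factor $b+1$), and choosing $\chi$ so that $\sqrt{1-\chi^2}$ is smooth with bounded gradient.
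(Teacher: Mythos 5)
Your proof is correct, and it reaches the paper's inequality by a somewhat different mechanism in the key step. The overall architecture is the same as the paper's: take a minimizer $u$ on $K_{nR}$ with $|u|\leq 1$, cut it into pieces supported in the $n^2$ side-$R$ subboxes, invoke magnetic translation invariance to bound each localized energy below by $m_0(b,R)$, and pay a localization error proportional to the buffer area $O(n^2R)$. Where you differ is in how the localized energies are compared to $m_0(b,nR)$. The paper exploits the Euler--Lagrange equation satisfied by the minimizer: testing $-b(\nabla-i\Ab_0)^2u_{nR}=(1-|u_{nR}|^2)u_{nR}$ against $\chi_{R,j}^2\,\overline{u_{nR}}$ yields the exact identity
\begin{equation*}
\int\Big(b|(\nabla-i\Ab_0)\chi_{R,j}u_{nR}|^2-|\chi_{R,j}u_{nR}|^2+\chi_{R,j}^2|u_{nR}|^4\Big)\,dx
= b\int\big|\,|\nabla\chi_{R,j}|\,u_{nR}\big|^2\,dx\,,
\end{equation*}
together with $G_{b,K_{nR}}(u_{nR})=-\tfrac12\int|u_{nR}|^4$, so that both sides of the comparison are expressed through $\int|u_{nR}|^4$ and the only error is the $|\nabla\chi|^2$ term. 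You instead never use the equation: you control the kinetic term by the IMS formula for $\chi^2+(1-\chi^2)=1$ (dropping the nonnegative energy of $\sqrt{1-\chi^2}\,u$) and the potential term by the pointwise inequality $-\chi^2|u|^2+\tfrac12\chi^4|u|^4\leq -|u|^2+\tfrac12|u|^4+(1-\chi^2)|u|^2$, which in fact needs only $0\leq\chi\leq1$ (the bound $|u|\leq 1$ enters only when integrating the buffer term). Your route is more elementary and more robust---it applies verbatim to near-minimizers and does not require the minimizer to be a critical point or any elliptic regularity---whereas the paper's identity-based computation is shorter here and gets reused in nearby arguments (e.g.\ in Propositions~\ref{prop-m0=mp} and \ref{prop-proof-AS}). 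Two small points to keep straight in a final write-up: choose the pair $(\chi,\sqrt{1-\chi^2})$ as a genuinely smooth quadratic partition of unity (as you note, an arbitrary smooth $\chi$ can make $\sqrt{1-\chi^2}$ non-Lipschitz where $\chi=1$), and justify, as you did, that the restriction of $\chi u$ to each subbox lies in $H^1_0$ of that subbox because it vanishes identically near the internal walls and has zero trace on the portions of the outer boundary.
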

\begin{proof}
We start by proving that the function $R\mapsto m_0(b,R)$ is monotone. Let $r>0$ and $u\in H^1_0(K_R)$. We extend $u$ to a function $\widetilde u\in H^1_0(K_{R+r})$ by setting $\widetilde u=0$ outside $K_R$.
In this way, we get,
$$G_{b,K_{R}}(u)=G_{b,K_{R+r}}(\widetilde u)\geq m_0(b,R+r)\,.$$
Since $u\in H^1_0(K_R)$ was arbitrary
this proves the montonicity of $m_0(b,R)$ with respect to $R$.

We prove the lower bound of the lemma.
If  $j=(j_1,j_2)\in\Z^2$, we denote by
$$K_{j}=I_{j_1}\times I_{j_2}\,,$$
where
$$\forall~m\in\Z\,,\quad I_m=\bigg{(}\frac{2m+1-n}2-\frac{1}2\,,\,\frac{2m+1-n}2+\frac{1}2\bigg{)}\,.$$
For all $R>0$, we set
$$\mathcal  K_{R,j}=\{R x~:~x\in K_{j}\}\,.$$
Let $\mathcal J=\{j=(j_1,j_2)\in \Z^2~:~0\leq  j_1,j_2\leq n-1\}$
and $K_{nR}=(-nR,nR)\times(-nR,nR)$. Then the family
$(\overline{\mathcal K_{R,j}})_{j\in\mathcal J}$ is a covering of
$K_{nR}$, and is formed exactly of $n^2$ squares. Let $u_{nR}$ be a
minimizer of $G_{b,K_{nR}}$, i.e. $G_{b,K_{nR}}(u_{nR})=m_0(b,nR)$.
We have the obvious decomposition,
\begin{equation}\label{eq-IMS'}
\int_{K_{nR}}|u_{nR}|^4\,dx= \sum_{j\in\mathcal J}
\int_{\mathcal K_{R,j}}|u_{nR}|^4\,dx.
\end{equation}
Let $\chi_{R,j}\in C_c^\infty(\R^2)$ be a function satisfying,
$$0\leq\chi_{R,j}\leq 1\,,\quad {\rm in~}\R^2\,,\quad
\supp\chi_{R,j}\subset \mathcal K_{R,j}\,,\quad \chi_{R,j}=1\quad{\rm in}~K_{R-1,j}\,,\quad |\nabla\chi_{R,j}|\leq C\quad{\rm in}~\R^2\,,$$
for some universal constant $C$.

Since $u_{nR}$ satisfies the equation $-b(\nabla-i\Ab_0)^2u_{nR}=(1-|u_{nR}|^2)u_{nR}$, it results from an integration by parts that $G_{b,K_{nR}}(u_{nR})=-\displaystyle\frac12\int_{K_{nR}}|u_{nR}|^4\,dx$ and,
$$\int_{K_{nR}}\left(b|(\nabla-i\Ab_0)\chi_{R,j}u_{nR}|^2-|\chi_{R,j}u_{nR}|^2+\chi_{R,j}^2|u_{nR}|^4\right)\,dx=
b\int_{K_{nR}}\big||\nabla \chi_{R,j}|u_{nR}\big|^2\,dx\,.
$$
Using that $|u_{nR}|\leq 1$, it is easy to check that (with a new constant $C$),
\begin{align}\label{eq:L-4Energy}
G_{b,K_{nR}}(\chi_{R,j}u_{nR})\leq -\frac12\int_{\mathcal K_{R,j}}|u_{nR}|^4\,dx+C(b+1)R\,.
\end{align}
Since each $\chi_{R,j}u_{nR}$ has support in a square of side length $R$, we get by magnetic translation invariance,
$$G_{b,K_{nR}}(\chi_{R,j}u_{nR})\geq m_0(b,R)\,.$$
We insert this lower bound  into \eqref{eq:L-4Energy}, then we take the sum over all $j\in\mathcal J$. In this way we get,
\begin{align*}
n^2\, m_0(b,R)&\leq -\frac12\sum_{j\in\mathcal J}\int_{\mathcal K_{R,j}}|u_{n,R}|^4\,dx+Cn^2(b+1)R\\
&=-\frac12\int_{K_{nR}}|u_{nR}|^4\,dx+Cn^2(b+1)R\,.\end{align*}
To finish the proof of Lemma~\ref{lem-monton'}, we just recall that,
$$m_0(b,nR)=G_{b,K_{nR}}(u_{nR})=-\frac12\int_{K_{nR}}|u_{nR}|^4\,dx\,.$$
\end{proof}

The next lemma treats the specific case $b=0$.

\begin{lem}\label{lem-b=0}
If $b=0$ and  $R>0$, then
$m_0(b,R)=-\displaystyle\frac{R^2}2$.
\end{lem}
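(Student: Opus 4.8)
The plan is to prove the statement $m_0(0,R)=-R^2/2$ by establishing the matching upper and lower bounds. The lower bound $m_0(0,R)\geq -R^2/2$ is already contained in Lemma~\ref{lem-rough'}: taking $b=0$ in the rough lower bound $-\tfrac{[1-b]_+^2R^2}{2}\leq m_0(b,R)$ gives exactly $m_0(0,R)\geq -\tfrac{R^2}{2}$. So the only real content is the upper bound $m_0(0,R)\leq -R^2/2$, which I would prove by exhibiting a suitable test function.

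The key observation is that when $b=0$, the magnetic Dirichlet energy term disappears entirely, and the functional reduces to the purely algebraic expression
\begin{equation*}
G_{0,K_R}(u)=\int_{K_R}\left(-|u|^2+\tfrac12|u|^4\right)\,dx\,.
\end{equation*}
Pointwise, the integrand $-t+\tfrac12 t^2$ (with $t=|u|^2$) is minimized at $t=1$, where it equals $-\tfrac12$. This suggests that the optimal configuration wants $|u|\equiv 1$ throughout $K_R$, which would give energy exactly $-R^2/2$. The obstacle is that the Dirichlet boundary condition forbids $|u|=1$ on $\partial K_R$; however, since there is no gradient penalty when $b=0$, I can afford to turn $u$ on from $0$ to $1$ across an arbitrarily thin boundary layer at no gradient cost.

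Concretely, I would fix a small parameter $\delta\in(0,R/2)$ and take a real-valued test function $u_\delta\in H^1_0(K_R)$ with $0\leq u_\delta\leq 1$, equal to $1$ on the shrunken square $K_{R-2\delta}$ and decaying to $0$ on the annular region of width $\delta$ near $\partial K_R$. Since $b=0$, the gradient of $u_\delta$ contributes nothing to $G_{0,K_R}$, so
\begin{equation*}
G_{0,K_R}(u_\delta)=\int_{K_R}\left(-u_\delta^2+\tfrac12 u_\delta^4\right)\,dx
\leq -\tfrac12\,|K_{R-2\delta}|+0
=-\tfrac12(R-2\delta)^2\,,
\end{equation*}
where on $K_{R-2\delta}$ the integrand equals $-\tfrac12$, and on the boundary layer I use that $-u_\delta^2+\tfrac12 u_\delta^4\leq 0$ for $0\leq u_\delta\leq 1$. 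Letting $\delta\to 0_+$ yields $m_0(0,R)\leq -R^2/2$. Combining with the lower bound finishes the proof. I expect no serious obstacle here: the only subtle point is the absence of the gradient term, which is precisely what decouples the problem and makes the sharp constant attainable in the limit. A clean way to present this without the $\delta$-limit is to note directly that $\inf$ over the algebraic integrand already forces the value $-R^2/2$, and the Dirichlet constraint can be met in the $H^1_0$-closure with no energy penalty since the integrand carries no derivative.
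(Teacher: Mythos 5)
Your proposal is correct and follows essentially the same route as the paper: the lower bound comes from completing the square (which is exactly how the $b=0$ case of Lemma~\ref{lem-rough'} that you cite is obtained), and the upper bound uses the same family of cutoff test functions equal to $1$ away from a thin boundary layer, exploiting that $b=0$ removes any gradient penalty. The only cosmetic difference is that you bound the boundary-layer contribution by the pointwise sign of the integrand, where the paper invokes dominated convergence.
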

\begin{proof}
Let $u\in H^1_0(K_R)$. It is easy to see, by completing the square, that if $b=0$,
\begin{align}
G_{b,K_R}(u) = \int_{K_R} \frac{1}{2} \big( |u|^2-1\big)^2\,dx -\frac{R^2}{2}.
\end{align}
Consequently, we get $m_0(b,R)\geq-R^2/2$ when $b=0$. We will prove that $m_0(b,R)\leq-R^2/2$.
If $n>R$ is a natural number, let $u_n \in C_c^{\infty}(K_R)$ be a function satisfying,
\begin{align*}
0\leq u_n \leq 1, \qquad u_n \equiv 1 \text{ on } K_{R-\frac1n}.
\end{align*}
Then $m_0(b,R)\leq G_{b,K_R}(u_n)$ for all $n>R$.
It is easy to check that Lebesgue's dominated convergence theorem gives,
$\displaystyle\lim_{n\to\infty} G_{b,K_R}(u_n)=-\frac{R^2}2$, thereby proving that $m_0(b=0,R)\leq-R^2/2$.
\end{proof}

\begin{proof}[Proof of Theorem~\ref{thm-thmd-AS}]
The proof is separated into several steps.

{\it Step~1.}
Notice that the first conclusion in the theorem is a direct consequence of Lemma~\ref{lem-rough'}.
We prove that $\displaystyle\frac{m_0(b,R)}{R^2}$ has a limit $g(b)$ as $R\to\infty$. Thanks to Lemma~\ref{lem-b=0}, we have nothing to prove when $b=0$. Also, Lemma~\ref{lem-b=0} gives us that $g(0)=-1/2$.

Suppose that $0<b\leq 1$. Let
$$f_b(R)=\frac{m_0(b,R)}{R^2}\,,\quad d_b(R)=m_0(b,R)\,.$$
Thanks to Lemmas~\ref{lem-rough'} and \ref{lem-monton'}, the functions $f_b$ and $d_b$ satisfy the assumptions of
Lemma~\ref{gen-lem}. Therefore, we conclude that $f_b(R)$ has a limit $g(b)$ as $R\to\infty$. Furthermore, since $b\in[0,1]$, we infer from the estimate of Lemma~\ref{gen-lem},
$$f_b(R)\leq g(b)+\frac{C}{R}\,,$$
where $C$ is a universal constant. This proves the upper bound in assertion (5) of Theorem~\ref{thm-thmd-AS}.

{\it Step~2.}
In this step,
we prove the lower bound $m_0(b,R)\geq g(b) R^2$. Let $n\in\N$ and $R>0$. We use the notation
$K_R=(-\frac{R}2,\frac{R}2)\times(-\frac{R}2,\frac{R}2)$ and $\widetilde K_R=(0,R)\times(0,R)$.
Let
$u\in H^1_0(\widetilde K_R)$ be a minimizer of $G_{b, \widetilde K_R}$. By magnetic translation invariance, we get
$m_0(b,R)=G_{b,\widetilde K_R}(u)$.
We extend $u$ to a function $\widetilde u\in H^1_0(\widetilde K_{nR})$ by `magnetic periodicity' as follows,
$$\widetilde u(x_1+R,x_2)=e^{i Rx_2/2}u(x_1,x_2)\,,\quad u(x_1,x_2+R)= e^{-i Rx_1/2}u(x_1,x_2)\,.$$
Using magnetic translation invariance,  it is easy to check that
$$G_{b,\widetilde K_{nR}}(\widetilde u)=n^2m_0(b,R)\,.$$
Consequently, we get $m_0(b,nR)\leq n^2m_0(b,R)$. We divide  both sides of this inequality by $n^2R^2$ then we take the limit as $n\to\infty$. That gives us $g(b)\leq m_0(b,R)$.

{\it Step~3.}

In this step we prove that the function $g$ is increasing. Let $b\geq 0$ and $\varepsilon>0$.
Since $\varepsilon>0$, it is easy to see that $m_0(b+\varepsilon,R)\geq m_0(b,R)$. Dividing both sides of this inequality by $R^2$ then taking $R\to\infty$ we get $g(b+\varepsilon)\geq g(b)$.

{\it Step~4.}

In this step we prove that $g$ is concave and continuous.
The concavity of $g$ is straight forward.
Upon writing
\begin{align}
m_0(b,R) = \inf_u \int \left(b|(\nabla-i\Ab_0)u|^2-|u|^2+\frac12|u|^4\right)\,dx,
\end{align}
we see that $b \mapsto m_0(b,R)$ is the infimum of a family of affine functions and therefore concave. So $g(b)$ is the pointwise limit of concave functions and therefore concave.

The concavity of $g$ implies continuity except at the endpoint $b=0$. Since $g(0)=-1/2$ and $g$ is non-decreasing it suffices to prove that $\displaystyle\limsup_{\varepsilon \to 0_{+}}g(\varepsilon)\leq -1/2$.
Consider $\varepsilon >0$. In Step~2, we proved that
\begin{equation}\label{eq-cont-g}
g(\varepsilon)\leq \frac{m_0(\varepsilon,R)}{R^2}\,,\end{equation}
for all $R\geq 1$. Let  $u\in H^1_0(K_R)$.
From \eqref{eq-cont-g} we get that
\begin{align}
\limsup_{\varepsilon \to 0_{+}}g(\varepsilon) &\leq R^{-2} \limsup_{\varepsilon \to 0_{+}} \int_{K_R}\left(\varepsilon|(\nabla-i\Ab_0)u|^2-|u|^2+\frac12|u|^4\right)\,dx  \nonumber\\
&= R^{-2} \int_{K_R}\left(-|u|^2+\frac12|u|^4\right)\,dx.
\end{align}
Since $u$ is arbitrary the result follows from Lemma~\ref{lem-b=0}.

{\it Step~5.} The assertion (4) in Theorem~\ref{thm-thmd-AS} follows
directly from Lemma~\ref{lem-rough'}.

This finishes the proof of Theorem~\ref{thm-thmd-AS}.
\end{proof}

\subsubsection{Periodic minimizers}\label{sec:periodic}\

Recall that for each $R>0$,  $K_{R}=(-R/2,R/2)\times(-R/2,R/2)$ is a square of side length $R$.
We introduce the following space,
\begin{multline}\label{eq-hc2-space1}
E_{R}=\bigg{\{}u\in H^1_{\rm loc}(\R^2;\C)~:~u(x_1+R,x_2)=e^{i
Rx_2/2 }u(x_1,x_2)\\
u(x_1,x_2+R )=e^{-iRx_1/2  }u(x_1,x_2)\,,~\forall~(x_1,x_2)\in\R^2\bigg{\}}\,.
\end{multline}

Notice that the periodicity conditions in
\eqref{eq-hc2-space1} are constructed in such a manner that, for any
function $u\in E_{R}$, the functions $|u|$, $|\nabla_{\Ab_0}u|$ and
the vector field
$\overline u \nabla_{\Ab_0}u$ are periodic with respect to
the given lattice.

Recall the functional $G_{b,\mathcal D}$ in \eqref{eq-LF-2D} above. We introduce the ground state energy,
\begin{equation}\label{eq-mp(b,R)}
m_{\rm p}(b,R)=\inf_{u\in E_R}
G_{b,K_R}(u)\,.
\end{equation}

Starting from a minimizing sequence, it is easy to see that when $b>0$, $G_{b,K_R}$ admit minimizers in the space $E_R$.
Various properties of the minimizers are collected in next proposition.

\begin{prop}\label{prop-periodic}
Let $b>0$ and let $u\in E_R$ be a minimizer of the functional $G_{b,K_R}$, i.e. $G_{b,K_R}(u)=m_{\rm p}(b,R)$.
Then $u$ has  the following properties.
\begin{enumerate}
\item $u$ is a solution of the equation,
\begin{equation}\label{eq-GL-R2}
-b(\nabla-i\Ab_0)^2u=(1-|u|^2)u\,,\quad{\rm in}~\R^2\,.\end{equation}
\item There exists a universal constant $C_{\max}>0$ such that
\begin{align}\label{eq:FHinfty}
\|u\|_{L^\infty(\R^2)}\leq \min\left(1,C_{\max}\left[\frac1b-1\right]_+^{1/2}\right)\,.
\end{align}
\end{enumerate}
\end{prop}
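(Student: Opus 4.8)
The plan is to establish the two assertions of Proposition~\ref{prop-periodic} separately, since they are of different natures: the first is the Euler--Lagrange equation associated with the variational problem \eqref{eq-mp(b,R)}, and the second is a quantitative $L^\infty$ bound that follows from the equation via a maximum-principle argument. For assertion~(1), I would compute the first variation of $G_{b,K_R}$ along the periodic space $E_R$. Given a minimizer $u\in E_R$ and a test function $\varphi\in E_R$ (note that $E_R$ is a complex linear space, being defined by the homogeneous cocycle conditions in \eqref{eq-hc2-space1}), one differentiates $t\mapsto G_{b,K_R}(u+t\varphi)$ at $t=0$ and sets the derivative to zero. The key structural point that makes this work is exactly the observation emphasized after \eqref{eq-hc2-space1}: for $u\in E_R$, the quantities $|u|$, $|\nabla_{\Ab_0}u|$, and $\overline u\,\nabla_{\Ab_0}u$ are genuinely $R$-periodic, so all integrals over $K_R$ are well defined and, crucially, the integration by parts that moves the magnetic gradient off $\varphi$ produces no boundary contribution --- the magnetic periodicity makes the boundary terms on opposite faces of $K_R$ cancel. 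This yields the weak form of \eqref{eq-GL-R2}, and since the minimizer is smooth by elliptic regularity (the nonlinearity is smooth and $u\in L^\infty$), the equation holds classically, first on $K_R$ and then on all of $\R^2$ by the periodicity of the coefficients.

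For assertion~(2), the bound $\|u\|_{L^\infty}\leq 1$ is the standard maximum-principle estimate: testing the equation \eqref{eq-GL-R2} appropriately, or examining a point where $|u|^2$ attains its (periodic, hence attained) maximum $M$, one finds that at such a point the magnetic Laplacian term forces $(1-M)\geq 0$, giving $M\leq 1$. The more interesting part is the improved bound $\|u\|_{L^\infty}\leq C_{\max}[\tfrac1b-1]_+^{1/2}$, which is only useful when $b<1$ and which says the minimizer is uniformly small as $b\uparrow 1$. I would obtain this by rescaling \eqref{eq-GL-R2} to the form of a reduced Ginzburg--Landau equation in the plane and then quoting the $L^\infty$-estimate from \cite{FH-jems} referenced in the introduction as the paper's key technical input. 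Concretely, setting $v(x)=u(\sqrt b\,x)/\sqrt{1/b-1}$ (for $b<1$) transforms \eqref{eq-GL-R2} into $-(\nabla-i\Ab_0)^2 v=(1/b-1)(1-(1/b-1)|v|^2)v$, or a normalized version thereof, whose solutions obey a universal sup-bound; undoing the scaling restores the factor $[\tfrac1b-1]_+^{1/2}$.

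The main obstacle, and the step deserving the most care, is the justification that the Euler--Lagrange derivation produces no boundary terms on $\partial K_R$. Because the boundary conditions defining $E_R$ are not periodic in the naive sense but only magnetically periodic (carrying the phase factors $e^{\pm iRx_j/2}$), one must verify that the bilinear boundary pairing $\int_{\partial K_R}\overline\varphi\,\nu\cdot(\nabla-i\Ab_0)u$ indeed cancels between opposite faces. The cleanest way I would present this is to check directly that the integrand $\overline\varphi\,\nu\cdot\nabla_{\Ab_0}u$ on the right face $\{x_1=R/2\}$ equals its counterpart on the left face $\{x_1=-R/2\}$ with the opposite outward normal --- this is precisely the periodicity of $\overline\varphi\,\nabla_{\Ab_0}u$ noted above, applied to the product of two elements of $E_R$ --- so that the two face integrals cancel, and similarly for the $x_2$-faces. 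Once this cancellation is in hand, the rest of both assertions is routine.
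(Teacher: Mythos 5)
Your proposal is correct and follows essentially the same route as the paper: derive the Euler--Lagrange equation \eqref{eq-GL-R2} on $K_R$ by varying within $E_R$ (with the boundary terms cancelling by magnetic periodicity) and extend it to $\R^2$, obtain $\|u\|_{L^\infty}\leq 1$ from the maximum principle at a point where the periodic function $|u|$ attains its maximum, and get the refined bound by quoting the $L^\infty$-estimate of \cite{FH-jems} (Theorem~3.1 there), exactly as the paper does. One minor correction: no rescaling is needed before invoking that theorem --- dividing \eqref{eq-GL-R2} by $b$ already puts it in the form $-(\nabla-i\Ab_0)^2u=\lambda(1-|u|^2)u$ with $\lambda=1/b$, which is precisely the setting of \cite{FH-jems}, and the rescaled equation you write for $v(x)=u(\sqrt b\,x)\big/\sqrt{1/b-1}$ is in any case not correct as stated (the change of variables $x\mapsto\sqrt b\,x$ alters the magnetic field strength, not the coefficient structure you display), though this does not affect the substance of the argument.
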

\begin{proof}
Since $u$ is a minimizer of $G_{b,K_R}$, then $u$ satisfies the equation in \eqref{eq-GL-R2} in $K_R$. Using the periodicity of
 $u$ together with the explicit definition of the magnetic potential $\Ab_0$, it is easy to check that $u$ satisfies the equation in $\R^2$.

Since $|u|$ is periodic, then $\|u\|_{L^\infty(\R^2)}=|u(x_0)|$ for some $x_0\in\overline{ K_R}$. It results from
a standard application of the strong maximum principle  that $|u(x_0)|\leq 1$. This proves that $\|u\|_{L^\infty(\R^2)}\leq 1$.

Since $u$ is a bounded solution of \eqref{eq-GL-R2}, then Property (2) of the proposition is a straight forward application of Theorem~3.1 in \cite{FH-jems}.
\end{proof}

In the next proposition, we exhibit the relation between the ground state energies  $m_0(b,R)$ and $m_{\rm p}(b,R)$.

\begin{prop}\label{prop-m0=mp}
Let  $m_0(b,R)$ and $m_{\rm p}(b,R)$ be as introduced in \eqref{eq-m0(b,R)} and \eqref{eq-mp(b,R)} respectively.
For all $b>0$ and $R>0$,  we have,
$$m_0(b,R)\geq m_{\rm p}(b,R)\,.$$
Furthermore, there exist universal constants $\epsilon_0\in(0,1)$ and $C>0$ such that,
if $b\geq 1-\epsilon_0$ and $R\geq 2$, then,
\begin{align}\label{eq:m0mP}
m_0(b,R)\leq m_{\rm p}(b,R)+C[1-b]_+R\,.
\end{align}
\end{prop}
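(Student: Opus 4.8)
The plan is to establish the two inequalities separately: the first is elementary and follows from extending Dirichlet functions to the periodic class, while the second requires a cut-off argument fueled by the $L^\infty$-bound of Proposition~\ref{prop-periodic}.

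\emph{The inequality $m_0(b,R)\geq m_{\rm p}(b,R)$.} I would observe that any $u\in H^1_0(K_R;\C)$ extends, via the magnetic-periodicity prescription used in Step~2 of the proof of Theorem~\ref{thm-thmd-AS}, to a function $\widetilde u\in E_R$. Since $u$ vanishes on $\partial K_R$, the extension glues continuously across the edges of the lattice, so indeed $\widetilde u\in H^1_{\rm loc}(\R^2)$ with $\widetilde u|_{K_R}=u$. As $G_{b,K_R}$ only sees the values on $K_R$, one gets $G_{b,K_R}(u)=G_{b,K_R}(\widetilde u)\geq m_{\rm p}(b,R)$, and taking the infimum over $u\in H^1_0(K_R)$ yields the claim.

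\emph{The inequality $m_0(b,R)\leq m_{\rm p}(b,R)+C[1-b]_+R$.} Here I start from a periodic minimizer $u\in E_R$ and build a Dirichlet competitor by multiplying with a cut-off $\chi\in C_c^\infty(K_R)$ satisfying $\chi\equiv1$ on $K_{R-1}$, $0\leq\chi\leq1$ and $|\nabla\chi|\leq C$; then $\chi u\in H^1_0(K_R)$, whence $m_0(b,R)\leq G_{b,K_R}(\chi u)$. The key is an exact energy identity: testing \eqref{eq-GL-R2} against $\chi^2 u$ exactly as in the proof of Lemma~\ref{lem-monton'}, and combining with the localization formula for $|(\nabla-i\Ab_0)(\chi u)|^2$, I expect to obtain
\begin{equation*}
G_{b,K_R}(\chi u)=b\int_{K_R}|\nabla\chi|^2\,|u|^2\,dx+\int_{K_R}\Big(\tfrac12\chi^4-\chi^2\Big)|u|^4\,dx\,.
\end{equation*}
On the other hand, testing \eqref{eq-GL-R2} against $u$ over one period and using periodicity gives $m_{\rm p}(b,R)=-\tfrac12\int_{K_R}|u|^4\,dx$, so that
\begin{equation*}
G_{b,K_R}(\chi u)-m_{\rm p}(b,R)=b\int_{K_R}|\nabla\chi|^2\,|u|^2\,dx+\frac12\int_{K_R}(\chi^2-1)^2|u|^4\,dx\,.
\end{equation*}

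Both remaining integrals are supported in the frame $K_R\setminus K_{R-1}$, whose area is $2R-1\leq2R$ for $R\geq2$. The decisive input is the $L^\infty$-bound of Proposition~\ref{prop-periodic}: for $b\geq1-\epsilon_0$ one has $\|u\|_{L^\infty(\R^2)}^2\leq C_{\max}^2[\tfrac1b-1]_+\leq C[1-b]_+$, the last step using $b\geq1-\epsilon_0$ to control $1/b$. Inserting $|u|^2\leq C[1-b]_+$ and $|u|^4\leq C[1-b]_+^2\leq C[1-b]_+$ (since $[1-b]_+\leq\epsilon_0\leq1$), and using $b\leq 1+\epsilon_0$ together with $|\nabla\chi|\leq C$, both integrals are bounded by $C[1-b]_+R$; this gives the asserted estimate. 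When $b\geq1$ the bound $[\tfrac1b-1]_+=0$ forces $u\equiv0$, so both energies vanish and the inequality is trivial.

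The main obstacle is the second part, and within it the careful derivation of the exact energy identity for $G_{b,K_R}(\chi u)$: the cross term in the localization formula does \emph{not} vanish for a single cut-off, and must be absorbed by invoking the Euler--Lagrange equation \eqref{eq-GL-R2}, precisely as in Lemma~\ref{lem-monton'}. Once that identity is in hand, the estimate reduces to combining the perimeter bound $|K_R\setminus K_{R-1}|=O(R)$ with the quantitative smallness $\|u\|_{L^\infty}^2=O([1-b]_+)$ supplied by Proposition~\ref{prop-periodic}.
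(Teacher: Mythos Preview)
Your proposal is correct and follows essentially the same route as the paper: extend Dirichlet functions by magnetic periodicity for the first inequality, and for the second, cut off a periodic minimizer, use the Euler--Lagrange equation \eqref{eq-GL-R2} to obtain the exact identity $G_{b,K_R}(\chi u)-m_{\rm p}(b,R)=b\int|\nabla\chi|^2|u|^2+\tfrac12\int(\chi^2-1)^2|u|^4$, then control the frame terms via the $L^\infty$-bound of Proposition~\ref{prop-periodic}. Your rewriting of the quartic remainder as the manifestly nonnegative $\tfrac12(\chi^2-1)^2|u|^4$ is a slight cosmetic improvement over the paper's bookkeeping, and your remark that the case $b\geq1$ is trivial (since then $u\equiv0$) correctly disposes of the need for any upper bound on $b$.
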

\begin{proof}
Since every function $u\in H^1_0(K_R)$ can be extended by magnetic periodicity to a function in the space $E_R$, we get immediately
that  $m_0(b,R)\geq m_{\rm p}(b,R)$.

We prove the upper bound. Suppose that $R\geq 2$. Let $\chi_R\in C_c^\infty(\R^2)$ be a function satisfying,
$$0\leq\chi_R\leq 1\quad{\rm in~}\R^2\,,\quad {\rm supp}\,\chi_R\subset K_R\,,\quad
\chi_R=1\quad{\rm in}~K_{R-1}\,,\quad|\nabla\chi_R|\leq M\quad{\rm in~}\R^2\,,$$
   for some universal constant $M$.

Let $u\in E_R$ be a minimizer of $G_{b,K_R}$, i.e. $G_{b,K_R}(u)=m_{\rm p}(b,R)$. Let $f=\chi_R u$. Then $f\in H^1_0(K_R)$ and consequently
$$m_0(b,R)\leq G_{b,K_R}(f)\,.$$
The equation for $u$ and an integration by parts yield,
$$G_{b,K_R}(f)=\int_{K_R}b|\,|\nabla\chi_R|u\,|^2\,dx+\frac12\int_{K_R}|\chi_R u|^4\,dx
- \int \chi_R^2 |u|^4\,dx\,,$$
and
$$m_{\rm p}(b,R)=G_{b,K_R}(u)=-\frac12\int_{K_R}|u|^4\,dx\,.$$
Using the properties of $\chi_R$---and the bound $\| u \|_{\infty} \leq 1$---we deduce the following upper bound,
$$m_0(b,R)\leq m_{\rm p}(b,R)+4(M^2+1)R\|u\|^2_{L^\infty(K_R)}\,.$$
We get \eqref{eq:m0mP} by inserting \eqref{eq:FHinfty} in this
estimate.
\end{proof}

The next proposition gives a uniform upper bound of $m_{\rm p}(b,R)$ which is interesting when $b$ is close to $1$ and $R$ is large.
It is a key-ingredient in the proof of Theorem~\ref{thm-AS} below. Actually, this bound is also true with $m_{\rm p}(b,R)$ replaced by $m_{0}(b,R)$ but we will not need this version.

\begin{prop}\label{prop-proof-AS}
Let  $g(b)$ and $m_{\rm p}(b,R)$ be as defined in \eqref{eq-g(b)} and \eqref{eq-mp(b,R)}   respectively.
There exist universal constants $\epsilon_0\in(0,1)$, $R_0\geq 2$ and $C>0$ such that,
if $1-\epsilon_0\leq b<1$ and $R\geq R_0$, then,
$$
\frac{m_{\rm p}(b,R)}{R^2}\leq g(b)+\frac{C(1-b)^{3/2}}{R}\,.
$$
\end{prop}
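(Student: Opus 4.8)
The plan is to prove the stronger "rate of approach to the thermodynamic limit" estimate
$$\frac{m_{\rm p}(b,R)}{R^2}\leq \frac{m_{\rm p}(b,kR)}{(kR)^2}+\frac{C(1-b)^{3/2}}{R}\,,$$
valid for every integer $k\geq 1$, and then to let $k\to\infty$. That the right-hand side converges to $g(b)$ is not part of the difficulty: combining Theorem~\ref{thm-thmd-AS}~(2) with Proposition~\ref{prop-m0=mp}, which gives $m_0(b,kR)-C(1-b)(kR)\leq m_{\rm p}(b,kR)\leq m_0(b,kR)$, shows that $m_{\rm p}(b,kR)/(kR)^2$ has the same limit $g(b)$ as $m_0(b,kR)/(kR)^2$.

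To establish the displayed inequality I would start from a minimizer $U\in E_{kR}$ of $G_{b,K_{kR}}$, so that $m_{\rm p}(b,kR)=G_{b,K_{kR}}(U)$. Two properties of $U$ drive the argument. First, by Proposition~\ref{prop-periodic}~(2) and the hypothesis $b\geq 1-\epsilon_0$, the amplitude is small, $\|U\|_{L^\infty}^2\leq C(1-b)$. Second, testing the equation \eqref{eq-GL-R2} against $U$ and using periodicity gives the identity $m_{\rm p}(b,kR)=-\tfrac12\int_{K_{kR}}|U|^4\,dx$. I then cover $K_{kR}$ by $k^2$ translated cells $Q_1,\dots,Q_{k^2}$ of side $R$ and, on each $Q_j$, introduce a real cutoff $\chi_j$ supported in $Q_j$, equal to $1$ outside a boundary layer $S_j$ of width $w$, with $|\nabla\chi_j|\leq C/w$ and $|S_j|\leq CRw$. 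The localized function $\chi_j U\in H^1_0(Q_j)$ extends by magnetic periodicity to an element of $E_R$, whence $m_{\rm p}(b,R)\leq G_{b,Q_j}(\chi_j U)$.

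The core computation reuses the localization identity already derived in the proof of Lemma~\ref{lem-monton'}: from the equation for $U$,
$$G_{b,Q_j}(\chi_j U)=b\int_{Q_j}|\nabla\chi_j|^2\,|U|^2\,dx-\int_{Q_j}\Big(\chi_j^2-\tfrac12\chi_j^4\Big)|U|^4\,dx\,,$$
and since $\chi_j^2-\tfrac12\chi_j^4\geq\tfrac12\chi_j^2\geq\tfrac12\mathbf 1_{Q_j\setminus S_j}$ this yields
$$G_{b,Q_j}(\chi_j U)\leq \frac{C}{w^2}\int_{S_j}|U|^2\,dx-\frac12\int_{Q_j}|U|^4\,dx+\frac12\int_{S_j}|U|^4\,dx\,.$$
Using $\|U\|_{L^\infty}^2\leq C(1-b)$ and $|S_j|\leq CRw$, the two error integrals are bounded by $C(1-b)R/w$ and $C(1-b)^2Rw$ respectively. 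The key new idea, which produces the gain over the generic $R^{-1}$ remainder, is to \emph{balance} these by choosing $w=(1-b)^{-1/2}$, so that both terms become $\leq C(1-b)^{3/2}R$. Summing over $j$ and recalling that $\sum_j\int_{Q_j}|U|^4\,dx=\int_{K_{kR}}|U|^4\,dx=-2\,m_{\rm p}(b,kR)$ gives $k^2 m_{\rm p}(b,R)\leq m_{\rm p}(b,kR)+Ck^2(1-b)^{3/2}R$, which is the displayed inequality after dividing by $(kR)^2$.

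The main obstacle I anticipate is twofold. First, the balancing choice $w=(1-b)^{-1/2}$ is admissible only when $w\leq R/2$, i.e. in the regime $R\gtrsim (1-b)^{-1/2}$; one must check that the complementary regime of small cells either does not arise in the intended application (where $b$ stays fixed while $R\to\infty$) or can be absorbed by enlarging $R_0$ and falling back on the cruder estimate $m_{\rm p}\leq m_0$ together with Theorem~\ref{thm-thmd-AS}~(5). Second, some care is needed with flux quantization: the spaces $E_R$ and $E_{kR}$ are nontrivial, and the magnetic periodization of $\chi_j U$ into $E_R$ is consistent, precisely when $R^2\in 2\pi\Z$; this holds once $R$ is taken in the flux-quantized set on which $m_{\rm p}(b,R)$ is genuinely negative, and $kR$ then inherits the property.
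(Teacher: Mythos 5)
Your core argument is correct, and it reaches the estimate by a different implementation than the paper's. Both proofs decompose a periodic minimizer on a large square into cells of side $\approx R$ and exploit the $L^\infty$-bound $\|U\|_{L^\infty}^2\leq C(1-b)$ of Proposition~\ref{prop-periodic}, with the same balancing of the localization width at $(1-b)^{-1/2}$. But the paper does this with a quadratic partition of unity $\sum_j\chi_{R,j}^2=1$ on overlapping squares of side $(1+a)R$, an IMS-type decomposition inequality, and an additional min--max (spectral) lower bound on each localized energy to absorb the $(1+a)^{-2}$ normalization, finally letting $n\to\infty$ and optimizing $a=((1-b)^{1/2}R)^{-1}$; the resulting per-unit-area errors $\frac{1-b}{aR^2}+a(1-b)^2$ are exactly your $\frac{1-b}{wR}+\frac{w(1-b)^2}{R}$ with $w=aR$. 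You instead use sharp cutoffs on disjoint cells together with the exact identity $G_{b,Q_j}(\chi_j U)=b\int|\nabla\chi_j|^2|U|^2\,dx-\int(\chi_j^2-\tfrac12\chi_j^4)|U|^4\,dx$ and the identity $m_{\rm p}(b,kR)=-\tfrac12\int_{K_{kR}}|U|^4\,dx$ (the device the paper itself uses in Lemma~\ref{lem-monton'} and Proposition~\ref{prop-m0=mp}, but not in its proof of this proposition), so the main terms resum exactly and no partition of unity or spectral-gap step is needed. This yields the clean two-scale inequality $m_{\rm p}(b,R)/R^2\leq m_{\rm p}(b,kR)/(kR)^2+C(1-b)^{3/2}/R$, and the passage $k\to\infty$ via Proposition~\ref{prop-m0=mp} and Theorem~\ref{thm-thmd-AS}(2) is the same limit identification the paper uses. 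Your route is somewhat more elementary and gives a slightly sharper statement (a quantitative comparison between two finite scales, not just with the limit).

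Two caveats. First, your proposed treatment of the regime $R\lesssim(1-b)^{-1/2}$ does not work as stated: falling back on $m_{\rm p}\leq m_0$ and Theorem~\ref{thm-thmd-AS}(5) only gives $m_{\rm p}(b,R)/R^2\leq g(b)+C/R$, and $C/R$ is not bounded by $C(1-b)^{3/2}/R$ when $b$ is close to $1$. The regime is nevertheless harmless: use the trivial bound $m_{\rm p}(b,R)\leq 0$ together with $|g(b)|\leq\tfrac12(1-b)^2$ from Theorem~\ref{thm-thmd-AS}(4); when $R\leq 2(1-b)^{-1/2}$ one has $(1-b)^2\leq 2(1-b)^{3/2}/R$, hence $m_{\rm p}(b,R)/R^2\leq 0\leq g(b)+(1-b)^{3/2}/R$, which is the claim. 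Second, your flux-quantization concern is legitimate and in fact unavoidable: the two periodicity relations defining $E_R$ are compatible only if $e^{iR^2}=1$, so $E_R=\{0\}$ (and $m_{\rm p}(b,R)=0$) unless $R^2\in 2\pi\Z$, and the stated inequality can only hold on the quantized set once $R$ is large. Restricting to quantized $R$ (with $kR$ inheriting the property, as you note) is consistent with the only place the proposition is used, namely the proof of Theorem~\ref{thm-AS}, where all limits in $R$ are taken along $|K_R|\in 2\pi\N$; the paper's own proof silently faces the same issue when it extends $\chi_{R,j}u_{nR}$ to an element of $E_{(1+a)R}$ for the specific value $a=((1-b)^{1/2}R)^{-1}$.
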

\begin{proof}
Let $n\geq 2$ be a natural number. If $a\in(0,1)$ and $j=(j_1,j_2)\in\Z^2$, we denote by
$$K_{a,j}=I_{j_1}\times I_{j_2}\,,$$
where
$$\forall~m\in\Z\,,\quad I_m=\bigg{(}\frac{2m+1-n}2-\frac{(1+a)}2\,,\,\frac{2m+1-n}2+\frac{(1+a)}2\bigg{)}\,.$$
Consider a partition of unity $(\chi_j)$ of $\R^2$ such that:
$$\sum_{j}|\chi_j|^2=1\,,\quad 0\leq\chi_j\leq1\quad{\rm in}~\R^2\,,\quad {\rm supp}\chi_j\subset K_{a,j}\,,\quad
|\nabla\chi_j|\leq \frac{C}{a}\,,$$
where $C$ is a universal constant.
We define $\chi_{R,j}(x)=\chi_j(x/R)$. Then we obtain a new partition of unity $\chi_{R,j}$ such that
${\rm supp}\,\chi_{R,j}\subset \mathcal K_{R,j}$,
with
$$\mathcal  K_{R,j}=\{R x~:~x\in K_{a,j}\}\,.$$
Let $\mathcal J=\{j=(j_1,j_2)\in \Z^2~:~1\leq  j_1,j_2\leq n\}$ and $K_{nR}=(-nR,nR)\times(-nR,nR)$.
Then the family
$(\mathcal K_{R,j})_{j\in\mathcal J}$ is a covering of  $K_{nR}$, and is formed exactly of $n^2$ squares.

We  restrict the partition of unity $(\chi_{R,j})$ to the set  $K_{nR}=(-nR,nR)\times(-nR,nR)$.
Let $u_{nR}$ be a minimizer of $G_{b,K_{nR}}$ over the space $E_R$, i.e.
 $G_{b,K_{nR}}(u_{nR})=m_{\rm p}(b,nR)$.
We have the following decomposition formula (using the pointwise inequality $\sum \chi_{R,j}^4 \leq \sum \chi_{R,j}^2 = 1$),
\begin{equation}\label{eq-IMS-ONE}
G_{b,K_{nR}}(u_{nR})\geq \sum_{j\in\mathcal J}
\left(G_{b,K_{nR}}(\chi_{R,j}u_{nR})
-\|\,|\nabla\chi_{R,j}|\,u_{nR}\|^2_{L^2(K_{nR})}\right)\,.
\end{equation}
We reformulate \eqref{eq-IMS-ONE} as follows,
\begin{equation}\label{eq-IMS-PRIME}
G_{b,K_{nR}}(u_{nR})\geq \sum_{j\in\mathcal J}
\left(\frac{G_{b,K_{nR}}(\chi_{R,j}u_{nR})}{(1+a)^2}+\frac{a(2+a)}{(1+a)^2}G_{b,K_{nR}}(\chi_{R,j}u_{nR})
-\|\,|\nabla\chi_{R,j}|\,u_{nR}\|^2_{L^2(K_{nR})}\right)\,.
\end{equation}
Notice that each $\chi_{R,j}$ has support in a square of side length $(1+a)R$, hence it can be extended to a function in the space
$E_{(1+a)R}$ introduced in \eqref{eq-hc2-space1}. Therefore, using magnetic translation invariance, we get,
$$G_{b,K_{nR}}(\chi_{R,j}u_{nR})\geq m_{\rm p}(b,(1+a)R)\,.$$
We insert this lower bound into \eqref{eq-IMS-PRIME}.
To estimate the localization error we use that $|\nabla \chi_{R,j}| \leq C/(aR)$.
The support of $|\nabla \chi_{R,j}|$ is contained in an $(aR)$-neighborhood of a square of sidelength $(1+a)R$ and therefore, its
area is of the order $a R^2$.
So we get  from \eqref{eq-IMS-PRIME} the following lower bound,
\begin{equation}\label{eq-lb-IMS-'}
m_{\rm p}(b,nR)\geq n^2\frac{m_{\rm p}(b,(1+a)R)}{(1+a)^2}-n^2\frac{C}{a}\|u_{n\ell}\|^2_{L^\infty(K_{nR})}
+\frac{a(2+a)}{(1+a)^2}\sum_{j\in\mathcal J}G_{b,K_{nR}}(\chi_{R,j}u_{nR})\,.
\end{equation}
Here the constant $C$ is independent of $a$ and $R$. Since $\chi_{R,j}u_{nR}$ has compact support in $\R^2$, and the lowest eigenvalue of the Schr\"odinger operator with constant unit magnetic field in $L^2(\R^2)$ is equal to $1$,  we get by the variational min-max principle that,
$$G_{b,K_{nR}}(\chi_{R,j}u_{nR})\geq (b-1)\int_{K_{nR}}|\chi_{R,j}u_{nR}|^2\,dx\geq -(1-b)R^2\|u_{nR}\|_{L^\infty(K_{nR})}^2\,.$$
Inserting this into
 \eqref{eq-lb-IMS-'} then dividing both sides of the resulting inequality by $n^2R^2$, we get (using $a\leq 1$)
\begin{equation}\label{eq-lb1-IMS-'}
\frac{m_{\rm p}(b,nR)}{n^2R^2}
\geq \frac{m_{\rm p}(b,(1+a)R)}{(1+a)^2R^2}-\left(\frac{C}{aR^2}+3a(1-b)\right)\|u_{nR}\|^2_{L^\infty(K_{nR})}\,.
\end{equation}
We select $\epsilon_0$ such that $0<\epsilon_0<1/\sqrt{1+C_{\max}^2}$\,, where $C_{\max}$ is the universal  constant
from Proposition~\ref{prop-periodic}. In this way, we get
that $\|u_{nR}\|_{L^\infty(K_{nR})}\leq C_{\max}\sqrt{1-b}$ for all $b\in(1-\epsilon_0,1)$.

We infer from \eqref{eq-lb2-IMS-'},
\begin{equation}\label{eq-lb1-IMS-''}
\frac{m_{\rm p}(b,nR)}{n^2R^2}
\geq \frac{m_{\rm p}(b,(1+a)R)}{(1+a)^2R^2}-C\left(\frac{1}{a R^2}+a(1-b)\right) (1-b)\,,
\end{equation}
for all $b\in(1-\epsilon_0,1)$,
 $R\geq 2$, $n\in\N$ and $a\in(0,1)$.
Theorem~\ref{thm-thmd-AS} and Proposition~\ref{prop-m0=mp} together tell us that the function $m_p(b,\ell)/[\ell^2]$ has limit $g(b)$ as $\ell\to\infty$. Therefore, letting $n\to\infty$ on both sides of \eqref{eq-lb1-IMS-''} gives us,
\begin{equation}\label{eq-lb2-IMS-'}
g(b)\geq \frac{m_{\rm p}(b,(1+a)R)}{(1+a)^2R^2}-C\left(\frac{1}{a R^2}+a(1-b)\right) (1-b)\,.
\end{equation}
We select $a=\frac1{(1-b)^{1/2}R}$. It results from \eqref{eq-lb2-IMS-'} that
$$
g(b)\geq \frac{m_{\rm p}(b,\widetilde R)}{\widetilde R^2}-\frac{C(1-b)^{3/2}}{{\widetilde R}}
\,,$$
for all $b\in(\epsilon_0,1)$, $\widetilde R\geq 2/\sqrt{\epsilon_0}$ and some universal constant $C$. This finishes the proof of
Proposition~\ref{prop-proof-AS}.
\end{proof}

\subsubsection{The Abrikosov energy}\label{sec:Abrikosov}\

Recall the notation that if $R>0$,  $K_{R}=(-R/2,R/2)\times(-R/2,R/2)$ is a square of side length $R$.
In this section, we assume
the quantization condition that $|K_{R}|/(2\pi)$ is an integer, i.e.
there exists $N\in\mathbb N$ such that,
\begin{equation}\label{eq-hc2-quantization}
R^2=2\pi N\,.\end{equation}
Recall the definition of the space $E_R$ in \eqref{eq-hc2-space1}.
We denote by $P_{R}$ the operator,
\begin{equation}\label{eq-hc2-poperator}
P_{R}=-(\nabla-i\Ab_0)^2\quad{\rm in}~L^2(K_{R})\,,
\end{equation}
with form domain the space $E_{R}$ introduced in
(\ref{eq-hc2-space1}). More precisely, $P_{R}$ is the self-adjoint
realization associated with the closed quadratic form
\begin{equation}\label{eq-hc2-poperatorQF}
E_{R}\ni f\mapsto Q_{R}(f)=\|(\nabla-i\Ab_0)f\|_{L^2(K_{R})}^2\,.
\end{equation}

The operator $P_{R}$ being with compact resolvent, let us denote by
$\{\mu_j(P_{R})\}_{j\geq1}$ the increasing sequence of its distinct
eigenvalues (i.e. without counting multiplicity).

The following proposition may be classical in the spectral theory
of Schr\"odinger operators, but we refer to \cite{AS} or \cite{Al1}
for a simple proof.

\begin{prop}\label{prop-hc2-poperator}
Assuming $R$  is such that $|K_{R}|\in2\pi\mathbb N$, then the
operator $P_{R}$ enjoys the following spectral properties:
\begin{enumerate}
\item $\mu_1(P_{R})=1$ and $\mu_2(P_{R})\geq 3$\,.
\item The space $L_{R}={\rm Ker}(P_{R}-1)$ is finite
dimensional and ${\rm dim}\,L_{R}=|K_{R}|/(2\pi)$\,.
\end{enumerate}
Consequently, denoting by $\Pi_1$ the orthogonal projection on the
space $L_{R}$ (in $L^2(K_{R})$), and by $\Pi_2={\rm Id}-\Pi_1$, then
for all $f\in D(P_{R})$, we have,
$$\langle P_{R}\Pi_2 f\,,\,\Pi_2f\rangle_{L^2(K_{R})}\geq
3\|f\|^2_{L^2(K_{R})}\,.$$
\end{prop}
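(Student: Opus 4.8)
\emph{The plan} is to realize $P_R$ through the harmonic--oscillator (ladder) algebra of the constant--field magnetic Laplacian and then read off the spectrum on the magnetic torus. Writing $z=x_1+ix_2$ and denoting by $\mathsf p_j=-i\partial_{x_j}-(\Ab_0)_j$ the covariant momenta (distinct from the projections $\Pi_1,\Pi_2$ in the statement), I would introduce the ladder operators $a=\tfrac1{\sqrt2}(\mathsf p_1+i\mathsf p_2)$ and $a^{\ast}=\tfrac1{\sqrt2}(\mathsf p_1-i\mathsf p_2)$. Since $\curl\Ab_0=1$ one computes $[\mathsf p_1,\mathsf p_2]=i$, hence $[a,a^{\ast}]=1$ and
\[
P_R=\mathsf p_1^2+\mathsf p_2^2=2\,a^{\ast}a+1 .
\]
The standard ladder argument ($\mathcal N a=a(\mathcal N-1)$ for $\mathcal N=a^{\ast}a$, together with $\mathcal N\ge0$) shows that $\mathcal N$ has spectrum in $\{0,1,2,\dots\}$, so every eigenvalue of $P_R$ lies in $\{2k+1:k=0,1,2,\dots\}=\{1,3,5,\dots\}$. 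This already yields $P_R\ge1$, and, once the value $1$ is shown to be attained, forces the next \emph{distinct} eigenvalue to satisfy $\mu_2(P_R)\ge3$.

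Next I would verify that this algebra descends to the magnetic torus. The space $E_R$ is exactly the common $1$--eigenspace of the two magnetic translations $u\mapsto e^{iRx_2/2}u(\cdot-(R,0))$ and $u\mapsto e^{-iRx_1/2}u(\cdot-(0,R))$, and the quantization condition $R^2=2\pi N$ is precisely what makes these two translations commute, so that $E_R$ is well defined. Because the covariant momenta $\mathsf p_1,\mathsf p_2$ commute with magnetic translations, so do $a$ and $a^{\ast}$; hence they preserve $E_R$ and the ladder structure survives on the torus. In particular $\Spec(P_R)\subset\{1,3,5,\dots\}$ on $E_R$, and $a^{\ast}$ maps $\mathrm{Ker}(\mathcal N-k)$ injectively into $\mathrm{Ker}(\mathcal N-(k+1))$ (with $\tfrac1{k+1}a$ a left inverse), which reflects the common multiplicity of the Landau bands.

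The substantive step is to identify $L_R=\mathrm{Ker}(P_R-1)=\mathrm{Ker}(a)\cap E_R$ and to compute its dimension. Solving $af=0$ gives $f=h(z)\,e^{-|z|^2/4}$ with $h$ entire, and the quasi--periodicity built into $E_R$ translates into the functional equations of theta functions attached to the lattice $R\Z+iR\Z$ carrying $N$ units of flux. The dimension of this space is $N=|K_R|/(2\pi)$: viewing $f$ as a holomorphic section of a degree--$N$ line bundle over the elliptic curve $\C/(R\Z+iR\Z)$, Riemann--Roch gives $\dim=\deg=N$ for $N\ge1$ (the higher cohomology vanishing). This classical Landau--level degeneracy count is the main obstacle; I would either invoke the theta--function dimension formula (as in the cited \cite{AS,Al1}) or count the quasi--periodic solutions directly via a Fourier expansion. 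Since $N\ge1$, the value $1$ is attained, so $\mu_1(P_R)=1$ and, by the first paragraph, $\mu_2(P_R)\ge3$; this establishes assertions (1) and (2).

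Finally, the displayed consequence follows from the spectral decomposition. With $\Pi_1$ the orthogonal projection onto $L_R$ and $\Pi_2=\mathrm{Id}-\Pi_1$, the range of $\Pi_2$ lies in the spectral subspace of $P_R$ associated with eigenvalues $\ge\mu_2\ge3$; hence $\langle P_R\Pi_2 f,\Pi_2 f\rangle_{L^2(K_R)}\ge 3\|\Pi_2 f\|_{L^2(K_R)}^2$ for every $f$ in the form domain, which is the asserted lower bound (the right--hand side being naturally $\|\Pi_2 f\|^2$, since the estimate is trivial on $L_R$).
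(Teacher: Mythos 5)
The paper does not actually prove this proposition: it is flagged as classical and the proof is delegated to \cite{AS} and \cite{Al1}, so there is no internal argument to measure yours against. What you propose is, in substance, the standard proof contained in those references, and it is correct in outline. The key points all check out: with $\Ab_0=\tfrac12(-x_2,x_1)$ one has $[\mathsf p_1,\mathsf p_2]=i$, hence $[a,a^\ast]=1$ and $P_R=2a^\ast a+1$; the covariant momenta commute with the magnetic translations, and the flux condition $R^2=2\pi N$ is exactly the cocycle/compatibility condition making the two quasi-periodicity relations defining $E_R$ consistent (so that $E_R\neq\{0\}$ and the ladder structure descends); $\mathrm{Ker}(P_R-1)=\mathrm{Ker}(a)\cap E_R$ because $\langle a^\ast af,f\rangle=\|af\|^2$; and in the symmetric gauge $af=0$ forces $f=h(z)e^{-|z|^2/4}$ with $h$ entire, which matches the paper's own description of $L_R$ in \eqref{eq-hc2-space2}, the quasi-periodicity turning $h$ into a theta function of degree $N$, whose space has dimension $N$. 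Three remarks. First, the degeneracy count $\dim L_R=N$ is the only substantive step and you invoke it (Riemann--Roch, or the theta-function dimension formula) rather than prove it; that is acceptable since it is classical and is precisely what \cite{AS,Al1} establish, but a self-contained alternative is the short Fourier-expansion count of quasi-periodic solutions you mention. Second, the ladder argument should be run on eigenfunctions, which are smooth by elliptic regularity ($P_R$ has compact resolvent), so that no domain issues arise for the unbounded operators $a,a^\ast$. Third, you are right that the displayed consequence is misstated in the paper: for $f\in L_R\setminus\{0\}$ the left-hand side vanishes, so the right-hand side must read $3\|\Pi_2f\|_{L^2(K_R)}^2$; this corrected form is also the one actually used later, e.g.\ in the proof of Theorem~\ref{thm-mp=c}.
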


The next lemma is a consequence of the existence of a spectral gap
between the first two eigenvalues of $P_{R}$. It is proved in \cite[Lemma~2.8]{FK}.

\begin{lem}\label{lem-hc2-poperator'}
Given $p\geq 2$, there exists a constant $C_p>0$ such that, for any
$\gamma\in(0,\frac12)$, $R\geq 1$ with $|K_R| \in 2 \pi {\mathbb N}$, and $f\in D(P_{R})$ satisfying
\begin{equation}\label{eq-hc2-hypf}
Q_{R}(f)-(1+\gamma)\|f\|^2_{L^2(K_{R})}\leq0\,,\end{equation} the
following estimate holds,
\begin{equation}\label{eq-hc2-1=proj}
\|f-\Pi_1f\|_{L^p(K_{R})}\leq
C_p\sqrt{\gamma}\,\|f\|_{L^2(K_{R})}\,.
\end{equation}
Here $\Pi_1$ is the projection on the space $L_{R}$.
\end{lem}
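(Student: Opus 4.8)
The plan is to reduce the desired $L^p$ control of $\Pi_2 f=f-\Pi_1 f$ to two $L^2$-type estimates on this high-energy part, and then to upgrade these to an $L^p$ bound via the diamagnetic inequality together with a Sobolev embedding whose constant is \emph{uniform} in $R$. Set $g:=\Pi_2 f$. First I would decompose $f=\Pi_1 f+g$. Since $\Pi_1 f\in L_R=\ker(P_R-1)$ one has $Q_R(\Pi_1 f)=\|\Pi_1 f\|_{L^2(K_R)}^2$, and the cross term $\langle P_R\Pi_1 f,g\rangle=\langle \Pi_1 f,g\rangle$ vanishes by orthogonality, so that
\begin{equation*}
Q_R(f)=\|\Pi_1 f\|_{L^2(K_R)}^2+Q_R(g).
\end{equation*}
Inserting this into the hypothesis \eqref{eq-hc2-hypf} and using the spectral gap $\mu_2(P_R)\ge 3$ of Proposition~\ref{prop-hc2-poperator}, which gives $Q_R(g)\ge 3\|g\|_{L^2(K_R)}^2$, I obtain
\begin{equation*}
(2-\gamma)\,\|g\|_{L^2(K_R)}^2\le \gamma\,\|\Pi_1 f\|_{L^2(K_R)}^2\le \gamma\,\|f\|_{L^2(K_R)}^2 .
\end{equation*}
As $\gamma<\tfrac12$ this yields $\|g\|_{L^2(K_R)}^2\le \tfrac{2\gamma}{3}\|f\|_{L^2(K_R)}^2$, and feeding this back into the identity for $Q_R(f)$ also gives $Q_R(g)\le C\gamma\,\|f\|_{L^2(K_R)}^2$. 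Hence both the $L^2$ norm and the magnetic energy of $g$ are $O\!\big(\gamma\|f\|_{L^2(K_R)}^2\big)$.

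Second, I would pass to the modulus $|g|$, which is genuinely $R$-periodic because $g\in E_R$. By the diamagnetic (Kato) inequality $|\nabla|g||\le |(\nabla-i\Ab_0)g|$ almost everywhere, so
\begin{equation*}
\||g|\|_{H^1(K_R)}^2\le \|g\|_{L^2(K_R)}^2+Q_R(g)\le C\gamma\,\|f\|_{L^2(K_R)}^2 .
\end{equation*}
It then suffices to establish a Sobolev embedding $\|h\|_{L^p(K_R)}\le C_p\,\|h\|_{H^1(K_R)}$ for $R$-periodic $h$, with $C_p$ independent of $R\ge 1$; applying it to $h=|g|$ and recalling $\|g\|_{L^p(K_R)}=\||g|\|_{L^p(K_R)}$ delivers the claim with the factor $\sqrt{\gamma}$.

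The main obstacle is precisely this $R$-uniform embedding: the scale-invariant Gagliardo–Nirenberg inequality degenerates on a torus (it fails on constants), so a direct application on $K_R$ produces an $R$-dependent constant. Rather than quoting \cite{FK}, I would prove it by a covering argument. Partition $K_R$ into $\lfloor R\rfloor^2$ congruent square cells of side length $R/\lfloor R\rfloor\in[1,2)$, which is possible since $R\ge1$. On each cell $Q$ the standard two-dimensional embedding gives $\|h\|_{L^p(Q)}\le C_p\|h\|_{H^1(Q)}$ with $C_p$ depending only on the (bounded) cell shape. Summing and using $\sum_Q a_Q^{p/2}\le\big(\sum_Q a_Q\big)^{p/2}$ for $p\ge2$ with $a_Q=\|h\|_{H^1(Q)}^2$, I get
\begin{equation*}
\|h\|_{L^p(K_R)}^p=\sum_Q\|h\|_{L^p(Q)}^p\le C_p^{\,p}\Big(\sum_Q\|h\|_{H^1(Q)}^2\Big)^{p/2}=C_p^{\,p}\,\|h\|_{H^1(K_R)}^p ,
\end{equation*}
so that $\|h\|_{L^p(K_R)}\le C_p\|h\|_{H^1(K_R)}$ with $C_p$ universal and manifestly independent of $R$ and of the quantization integer $N$. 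Combining the three paragraphs gives $\|f-\Pi_1 f\|_{L^p(K_R)}\le C_p\sqrt{\gamma}\,\|f\|_{L^2(K_R)}$, as required.
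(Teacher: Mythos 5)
Your proof is correct. Note that the paper does not prove Lemma~\ref{lem-hc2-poperator'} itself but defers to \cite[Lemma~2.8]{FK}, and the argument there follows the same strategy you use: orthogonal decomposition plus the spectral gap of Proposition~\ref{prop-hc2-poperator} to obtain the bounds $\|\Pi_2 f\|_{L^2(K_R)}^2\leq \tfrac{2\gamma}{3}\|f\|_{L^2(K_R)}^2$ and $Q_R(\Pi_2 f)\leq C\gamma\|f\|_{L^2(K_R)}^2$, then the diamagnetic inequality and a Sobolev embedding whose constant is uniform in $R$; so your proposal is essentially a self-contained version of the cited proof. Your covering argument correctly supplies the one delicate point (the $R$-uniformity of the Sobolev constant, since the cells have side length in $[1,2)$), and incidentally the periodicity of $|g|$ is never actually needed: both the diamagnetic inequality and the cell decomposition apply to arbitrary $H^1(K_R)$ functions.
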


We  introduce the following  energy functional (the Abrikosov
energy),
\begin{equation}\label{eq-hc2-eneAb}
F_{R}(v)=\int_{K_{R}}\left(-|v|^2+\frac12|v|^4\right)\,d
x\,.
\end{equation}
The energy $F_R$ will be minimized on the space
$L_{R}$, the eigenspace of the first eigenvalue of the periodic
operator $P_{R}$,
\begin{align}\label{eq-hc2-space2}
L_{R}&=\{u\in E_{R}~:~P_{R}u=u\}\nonumber\\
&=\{u\in
E_{R}~:~\left(\partial_{x_1}+i\partial_{x_2}+\frac12(x_1+ix_2)\right)u=0\}\,.
\end{align}
We set,
\begin{equation}\label{eq-hc2-c(r,t)}
c(R)=\min\{F_{R}(u)~:~u\in L_{R}\}\,,
\end{equation}

In the next theorem, we exhibit a connection between the ground state energies $m_{\rm p}(b,R)$ and $c(R)$. In the regime where the parameter $b$ is close to $1$, Theorem~\ref{thm-mp=c} is an improvement of Proposition~4.2 in \cite{AS}.

\begin{thm}\label{thm-mp=c}
Let $m_{\rm p}(b,R)$ and $c(R)$ be as   introduced in
\eqref{eq-mp(b,R)} and \eqref{eq-hc2-c(r,t)} respectively.
For all $b>0$ and $R>0$, we have,
$$m_{\rm p}(b,R)\leq [1-b]_+^2c(R)\,.$$
Furthermore, there exist universal constants $\epsilon_0\in(0,1)$ and $C>0$ such that, if $R\geq 2$,
$b\geq 1-\epsilon_0$,  and $0<\sigma<1/2$, then,
$$m_{\rm p}(b,R)\geq [1-b]_+^2\bigg((1+2\sigma)c(R)-C\sigma^{-3}(1-b)^2R^4\bigg)\,.$$
\end{thm}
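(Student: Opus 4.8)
The plan is to prove the two inequalities separately: the upper bound by a scaled test function built from the lowest Landau band, and the lower bound by decomposing a minimizer with respect to the spectral projections $\Pi_1,\Pi_2$ of $P_R$.

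\emph{Upper bound.} Since $m_{\rm p}(b,R)\le G_{b,K_R}(0)=0$ when $b\ge1$, I may assume $0<b<1$. I would take a minimizer $v_0\in L_R$ of the Abrikosov energy $F_R$ and test with $u=t\,v_0$. Because $v_0\in L_R={\rm Ker}(P_R-1)$, we have $\|(\nabla-i\Ab_0)v_0\|_{L^2(K_R)}^2=\langle P_R v_0,v_0\rangle=\|v_0\|_{L^2(K_R)}^2$, so that
\[
G_{b,K_R}(t v_0)=(b-1)\,t^2\|v_0\|_{L^2(K_R)}^2+\tfrac12\,t^4\|v_0\|_{L^4(K_R)}^4 .
\]
Minimizing the right-hand side over $t$ gives the value $-\tfrac12(1-b)^2\|v_0\|_{L^2(K_R)}^4/\|v_0\|_{L^4(K_R)}^4$, and since the ratio $\|v_0\|_2^4/\|v_0\|_4^4$ is scale invariant while $v_0$ minimizes $F_R$ on the linear space $L_R$, this equals $(1-b)^2 c(R)$. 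Hence $m_{\rm p}(b,R)\le[1-b]_+^2\,c(R)$.

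\emph{Lower bound.} Assume $0<b<1$ and let $u\in E_R$ be a minimizer, $G_{b,K_R}(u)=m_{\rm p}(b,R)$. Write $v=\Pi_1 u\in L_R$ and $w=\Pi_2 u$. The cross terms vanish and $\langle P_R w,w\rangle\ge 3\|w\|_{L^2(K_R)}^2$ by Proposition~\ref{prop-hc2-poperator}, so
\[
G_{b,K_R}(u)=(b-1)\|v\|_{L^2(K_R)}^2+\big(b\langle P_R w,w\rangle-\|w\|_{L^2(K_R)}^2\big)+\tfrac12\|u\|_{L^4(K_R)}^4 ,
\]
and for $b>1/3$ the middle term is nonnegative and may be discarded. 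For the quartic term I would use, for every $\eta>0$, the convexity bound $(s+t)^4\le(1+\eta)^3 s^4+(1+\eta^{-1})^3 t^4$ applied to $v=u-w$, which after rearranging gives $\|u\|_4^4\ge(1+\eta)^{-3}\|v\|_4^4-\eta^{-3}\|w\|_4^4$. The key quantitative input is that the minimizer concentrates in the lowest Landau band: from $G_{b,K_R}(u)\le 0$ one gets $Q_R(u)\le b^{-1}\|u\|_2^2=(1+\gamma)\|u\|_2^2$ with $\gamma=(1-b)/b$, so Lemma~\ref{lem-hc2-poperator'} yields $\|w\|_{L^4(K_R)}\le C\sqrt{\gamma}\,\|u\|_{L^2(K_R)}$. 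Combining this with the a priori bound $\|u\|_{L^\infty}\le C_{\max}\sqrt{1-b}$ from Proposition~\ref{prop-periodic}, which forces $\|u\|_{L^2(K_R)}\le C\sqrt{1-b}\,R$, gives $\|w\|_{L^4(K_R)}\le C(1-b)R$, hence $\|w\|_4^4\le C(1-b)^4R^4$.

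To assemble, I rescale $v=\mu\tilde v$ in $L_R$ with $\mu^2=(1-b)(1+\eta)^3$, so that
\[
(b-1)\|v\|_2^2+\tfrac{1}{2(1+\eta)^3}\|v\|_4^4=(1-b)^2(1+\eta)^3\,F_R(\tilde v)\ge(1-b)^2(1+\eta)^3\,c(R).
\]
Choosing $\eta=\eta(\sigma)$ with $(1+\eta)^3\le 1+2\sigma$ (possible with $\eta\approx\sigma$, whence $\eta^{-3}\le C\sigma^{-3}$) and using $c(R)\le 0$ turns the main term into a bound by $(1-b)^2(1+2\sigma)c(R)$, while the quartic error is controlled by $\tfrac12\eta^{-3}\|w\|_4^4\le C\sigma^{-3}(1-b)^4R^4$, producing exactly the claimed inequality. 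The main obstacle is this quantitative control of the out-of-band part $w=\Pi_2u$: one must couple the spectral gap (Lemma~\ref{lem-hc2-poperator'}) with the $L^\infty$ and $L^2$ a priori bounds to pin $\|w\|_{L^4}$ at size $(1-b)R$, and then carry out the bookkeeping that trades the factor $(1+\eta)^3\approx 1+2\sigma$ against the error $\sigma^{-3}(1-b)^4R^4$. The hypothesis $b\ge 1-\epsilon_0$ is used precisely to make $\gamma$ small (so Lemma~\ref{lem-hc2-poperator'} applies) and to ensure $b>1/3$ (so the kinetic remainder can be dropped).
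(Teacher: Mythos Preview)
Your proof is correct and follows essentially the same route as the paper. For the upper bound the paper simply plugs in $f=\sqrt{1-b}\,v_0$, which is precisely the optimal $t$ in your one-parameter family (since a minimizer $v_0$ of $F_R$ satisfies $\|v_0\|_2^2=\|v_0\|_4^4$); for the lower bound the paper also decomposes $u=\Pi_1u+\Pi_2u$, drops the $\Pi_2$-kinetic term via the spectral gap, invokes Lemma~\ref{lem-hc2-poperator'} together with the $L^\infty$ bound of Proposition~\ref{prop-periodic} to control $\|\Pi_2u\|_{L^4}$, and rescales $\Pi_1u$ to land on $F_R$ --- the only cosmetic difference is that the paper passes through the triangle inequality $\|u\|_4\ge\|\Pi_1u\|_4-\|\Pi_2u\|_4$ before the $(1-\sigma)$/$\sigma^{-3}$ splitting, whereas you use the equivalent pointwise convexity bound with parameter $\eta$.
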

\begin{proof}
We start by proving the upper bound on $m_{\rm p}(b,R)$. Let $v\in L_R$ be a minimizer of $F_R$, i.e.
$F_R(v)=c(R)$. Let $f=[1-b]_+^{1/2}v$. Then $f\in E_R$ and consequently,
$$m_{\rm p}(b,R)\leq G_{K_R}(f)\,.$$
Using that $v$ is an eigenfunction for the operator $P_R$, a simple computation gives,
\begin{align*}
G_{K_R}(f)&=[1-b]_+\int_{K_R}\left(-(1-b)|v|^2+\frac12[1-b]_+|v|^4\right)\,dx\\
&= [1-b]_+^2c(R)\,.\end{align*}

We prove the lower bound on $m_{\rm p}(b,R)$. Let $u\in E_R$ be a minimizer of $G_{K_R}$, i.e.
$G_{K_R}(u)=m_{\rm p}(b,R)$. Since
we do not know that $u\in L_R$, we approximate $u$ by its projection on the space $L_R$. Actually, we infer from
$m_{\rm p}(b,R)\leq 0$  the following inequality,
$$\int_{K_R} \left(|(\nabla-i\Ab_0)u|^2-|u|^2\right)\,dx\leq0\,.$$
Let $\gamma=[\frac1b-1]_+$. We infer from Lemma~\ref{lem-hc2-poperator'} that,
$$\|\Pi_1u-u\|_{L^4(K_R)}\leq C\sqrt{\gamma}\|u\|_{L^2(K_R)}\,,$$
where $\Pi_1$ is the projector on the space $L_R$.
We deduce from this inequality the following estimate,
$$\|u\|_{L^4(K_R)}\geq \|\Pi_1u\|_{L^4(K_R)}-C\sqrt{\gamma}\|u\|_{L^2(K_R)}\,.$$
This gives us, for some new constant $C$ and for all $\sigma\in(0,1)$,
\begin{equation}\label{eq-u>pi1}
\|u\|^4_{L^4(K_R)}\geq (1-\sigma)\|\Pi_1u\|^4_{L^4(K_R)}-C\sigma^{-3}\gamma^2\|u\|^4_{L^2(K_R)}\,.
\end{equation}
Now, we use the bound on $\|u\|_{L^\infty(\R^2)}$ given in Proposition~\ref{prop-periodic}. By selecting $\epsilon_0$ such that
$C_{\max}\sqrt{\epsilon_0}/\sqrt{1-\epsilon_0}<1$, we get for all $b\geq 1-\epsilon_0$,
\begin{equation}\label{eq-u>pi1'}
\|u\|^4_{L^4(K_R)}\geq (1-\sigma)\|\Pi_1u\|^4_{L^4(K_R)}-C\sigma^{-3}\gamma^4R^4\,.
\end{equation}
Using Proposition~\ref{prop-hc2-poperator} and variational min-max principle, we also get,
$$\int_{K_R}\left(b|(\nabla-i\Ab_0)u|^2-|u|^2\right)\,dx\geq (b-1)\int_{K_R}|\Pi_1u|^2\,dx\,.$$
This estimate, togther with that in \eqref{eq-u>pi1'} give us the following lower bound,
$$G_{K_R}(u)\geq \int_{K_R}\left(-(1-b)|\Pi_1u|^2+\frac14(1-\sigma)|\Pi_1u|^4\right)\,dx
-C\sigma^{-3}\gamma^4R^4\,.$$
Recall that $u$ is a minimizer of $G_{K_R}$, and that Proposition~\ref{prop-periodic} tells us that $u=0$ if $b\geq 1$. Therefore, the aforementioned lower bound is the same as,
\begin{equation}\label{eq-ab-lb}
m_{\rm p}(b,R)\geq \int_{K_R}\left(-[1-b]_+|\Pi_1u|^2+\frac14(1-\sigma)|\Pi_1u|^4\right)\,dx
-C\sigma^{-3}\gamma^4R^4\,.
\end{equation}
We introduce a function $v\in L_R$ such that,
$$\Pi_1u=\frac{[1-b]_+}{1-\sigma} v\,.$$
Notice that $v=0$ if $b\geq 1$, since $u=0$ in this case. Notice that $v$ is constructed so that the right hand side on \eqref{eq-ab-lb} becomes equal to,
$$\frac1{1-\sigma}F_R(v)\,.$$
Since $v\in L_R$, then $F_R(v)\geq c(R)$. Also, since $0<\sigma<1/2$, then $(1-\sigma)^{-1}\leq 1+2\sigma$.
Consequently, we get,
$$m_{\rm p}(b,R)\geq (1+2\sigma)  [1-b]_+^2c_0(R)
-C\sigma^{-3}\gamma^4R^4\,.$$
Recalling the definition of $\gamma=[\frac1b-1]_+$, the last estimate is nothing but the estimate of Theorem~\ref{thm-mp=c}.
\end{proof}

As consequence of Theorems~\ref{thm-thmd-AS} and \ref{thm-mp=c}, and Proposition~\ref{prop-proof-AS}, we
prove the following theorem.

\begin{thm}\label{thm-AS}
Let the functions $g(b)$ and $c(R)$ be as defined in \eqref{eq-g(b)} and \eqref{eq-hc2-c(r,t)} respectively.
The limits
$$
\lim_{b\to1_-}\frac{g(b)}{(1-b)^2}\quad {\rm and}\quad
\lim_{\substack{R\to\infty\\|K_{R}|/(2\pi)\in\mathbb
N}}\frac{c(R)}{|K_R|}
$$
exist and are equal to  a universal constant $E_2\in[-\frac12,0)$. That is,
\begin{equation}\label{eq-hc2-E2-TDL}
E_2=
\lim_{\substack{R\to\infty\\|K_{R}|/(2\pi)\in\mathbb
N}}\frac{c(R)}{|K_R|}=\lim_{b\to1_-}\frac{g(b)}{(1-b)^2}\,.
\end{equation}
\end{thm}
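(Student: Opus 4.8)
The plan is to establish the existence of both limits and their equality by combining the three preceding results into a chain of two-sided estimates. The key observation is that all three quantities --- $g(b)$ near $b=1$, $m_{\rm p}(b,R)/R^2$, and $c(R)/|K_R|$ --- are coupled through Proposition~\ref{prop-proof-AS} and Theorem~\ref{thm-mp=c}. First I would fix the quantization constraint $|K_R|=R^2\in 2\pi\N$, which is needed for the spectral gap in Proposition~\ref{prop-hc2-poperator} and hence for $c(R)$ to be defined. Along such a sequence of $R\to\infty$, I want to show that $c(R)/R^2$ converges, and that its limit coincides with $\lim_{b\to1_-} g(b)/(1-b)^2$.

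The main idea is to extract the common limit by playing the upper bound $m_{\rm p}(b,R)\leq[1-b]_+^2 c(R)$ against the lower bound $m_{\rm p}(b,R)\geq[1-b]_+^2\big((1+2\sigma)c(R)-C\sigma^{-3}(1-b)^2R^4\big)$ from Theorem~\ref{thm-mp=c}, while simultaneously using Proposition~\ref{prop-proof-AS}, which says $m_{\rm p}(b,R)/R^2\leq g(b)+C(1-b)^{3/2}/R$, together with the already-established convergence $m_{\rm p}(b,R)/R^2\to g(b)$ as $R\to\infty$ (from Theorem~\ref{thm-thmd-AS} and Proposition~\ref{prop-m0=mp}). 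Dividing the upper bound of Theorem~\ref{thm-mp=c} by $[1-b]_+^2 R^2$ and letting $R\to\infty$ (along the quantized sequence) gives $g(b)/(1-b)^2\leq \liminf_R c(R)/R^2$; dividing the lower bound similarly and choosing $\sigma$ small yields the reverse inequality up to controllable errors. The delicate balancing comes from the coupling of the three scales $b\to1$, $R\to\infty$, and $\sigma\to0$: one must let $R\to\infty$ first to eliminate the $\sigma^{-3}(1-b)^2R^4$ term's interaction with $R$, then send $\sigma\to0$, and finally take $b\to1_-$.

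Concretely I would proceed as follows. Set $L:=\liminf_{R} c(R)/|K_R|$ and $L':=\limsup_R c(R)/|K_R|$ along $R^2\in2\pi\N$. From the upper bound in Theorem~\ref{thm-mp=c}, $m_{\rm p}(b,R)/([1-b]_+^2 R^2)\leq c(R)/R^2$, and letting $R\to\infty$ gives $g(b)/(1-b)^2\leq L$ for each fixed $b<1$. For the matching upper estimate on $c(R)/R^2$, I would combine the lower bound of Theorem~\ref{thm-mp=c}, divided by $[1-b]_+^2R^2$, namely
\begin{equation*}
\frac{c(R)}{R^2}\leq \frac{1}{1+2\sigma}\left(\frac{m_{\rm p}(b,R)}{[1-b]^2R^2}+C\sigma^{-3}(1-b)^2R^2\right),
\end{equation*}
with Proposition~\ref{prop-proof-AS}, which controls $m_{\rm p}(b,R)/R^2$ by $g(b)+C(1-b)^{3/2}/R$. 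Substituting and choosing $R,b,\sigma$ in a coordinated way --- for instance tying $R$ to a suitable negative power of $(1-b)$ so that both $(1-b)^2R^2$ and $1/R$ tend to zero --- forces $L'\leq \limsup_{b\to1_-} g(b)/(1-b)^2$. Since Remark~\ref{rem-1} and \eqref{eq-prop-g(b)} already guarantee $g(b)/(1-b)^2$ stays in $[-\tfrac12,-\alpha]$, one concludes that $g(b)/(1-b)^2$ has a limit $E_2\in[-\tfrac12,0)$ and that $c(R)/|K_R|$ converges to the same value.

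The hard part will be the joint limiting procedure: the error term $C\sigma^{-3}(1-b)^2R^4$ in Theorem~\ref{thm-mp=c} grows in $R$, so one cannot simply send $R\to\infty$ with $b$ and $\sigma$ fixed; the parameters must be linked (e.g. $R\sim(1-b)^{-s}$ for an appropriate exponent $s$, and $\sigma\to0$ slowly) so that every remainder vanishes while the quantization $R^2\in2\pi\N$ is still respectable along a subsequence. Verifying that such a coupled scaling exists and that it pins down a \emph{single} value $E_2$ (rather than only trapping it in an interval) is the crux; the strict negativity $E_2<0$ then follows from the lower estimate $|g(b)|\geq\alpha(1-b)^2$ in \eqref{eq-prop-g(b)}, and $E_2\geq-\tfrac12$ from the upper estimate there.
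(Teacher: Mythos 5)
Your strategy coincides with the paper's own proof: chain the two-sided bounds of Theorem~\ref{thm-mp=c} with Proposition~\ref{prop-proof-AS} and the convergence $m_{\rm p}(b,R)/R^2\to g(b)$ (which holds for $b\geq 1-\epsilon_0$ by Theorem~\ref{thm-thmd-AS} and Proposition~\ref{prop-m0=mp}), then couple $R$ to $1-b$ so that the error terms vanish. Your two inequalities
\[
\limsup_{b\to1_-}\frac{g(b)}{(1-b)^2}\leq L:=\liminf_{R}\frac{c(R)}{R^2}\,,
\qquad
L':=\limsup_{R}\frac{c(R)}{R^2}\leq \limsup_{b\to1_-}\frac{g(b)}{(1-b)^2}\,,
\]
do give $L=L'=\limsup_{b\to1_-}g(b)/(1-b)^2=:E_2$, so the convergence of $c(R)/|K_R|$ is correctly established.

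The genuine gap is the existence of $\lim_{b\to1_-}g(b)/(1-b)^2$, which the theorem also asserts. Your argument controls only the $\limsup$, and the justification you give --- that the limit exists ``since $g(b)/(1-b)^2$ stays in $[-\frac12,-\alpha]$'' by \eqref{eq-prop-g(b)} --- is a non sequitur: boundedness does not imply convergence, and nothing in your chain excludes $\liminf_{b\to1_-}g(b)/(1-b)^2<E_2$. The paper closes exactly this point by a second pass through the coupled inequality \eqref{eq-lim-E2-ub'}: once $\lim_R c(R)/R^2=E_2$ is known, it sets $R=(1-b)^{-3/4}$, takes $\liminf_{b\to1_-}$ to obtain $E_2\leq\frac1{1+2\sigma}\liminf_{b\to1_-}g(b)/(1-b)^2$, and then sends $\sigma\to0_+$. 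Alternatively, your own tools give the missing inequality in one step, without first knowing that $c(R)/R^2$ converges: for any sequence $b_k\to1_-$ choose quantized $R_k\sim(1-b_k)^{-3/4}$; the coupled bound yields $\liminf_k g(b_k)/(1-b_k)^2\geq(1+2\sigma)\liminf_k c(R_k)/R_k^2\geq(1+2\sigma)L$, and letting $\sigma\to0_+$ gives $\liminf_{b\to1_-}g(b)/(1-b)^2\geq L$. Either way, this step must appear explicitly; as written, your proof only proves the statement about $c(R)/|K_R|$ and the $\limsup$ of $g(b)/(1-b)^2$. A secondary inaccuracy: your coupling condition ``both $(1-b)^2R^2$ and $1/R$ tend to zero'' is too weak. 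After dividing Proposition~\ref{prop-proof-AS} by $(1-b)^2$, the first error term is $C/\big((1-b)^{1/2}R\big)$, so the correct window is $(1-b)^{-1/2}\ll R\ll(1-b)^{-1}$ (e.g.\ $R\sim(1-b)^{-3/4}$, as the paper takes); the choice $R=(1-b)^{-1/4}$ satisfies both of your stated conditions, yet that error term blows up.
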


\begin{rem}
The result of Theorem~\ref{thm-AS} appears first in \cite[Theorems~1 \& 2]{AS}.
Our proof is based on a different approach from \cite{AS}. Also in \cite[p.~200]{AS},  the proof of the inequality,
$$\limsup_{\substack{R\to\infty\\|K_{R}|/(2\pi)\in\mathbb
N}}\frac{c(R)}{|K_R|}\leq\liminf_{b\to1_-}\frac{g(b)}{(1-b)^2}\,,$$
seems to rely on an estimate of the type we give in Proposition~\ref{prop-proof-AS}, which is missing in \cite{AS}.
\end{rem}

\begin{proof}[Proof of Theorem~\ref{thm-AS}]
Using Theorems~\ref{prop-m0=mp} and \ref{thm-mp=c},
we may write for all $b\in(\frac12,1)$ and $\sigma\in(0,\frac12)$,
\begin{equation}\label{eq-lim-E2-lb}
\frac{m_0(b,R)}{R^2}+\frac{C(1-b)}{R}
\leq (1-b)^2\frac{c(R)}{R^2}\end{equation}
and
\begin{equation}\label{eq-lim-E2-ub}
(1-b)^2\frac{c(R)}{R^2}\leq
\frac1{1+2\sigma}\left(\frac{m_{\rm p}(b,R)}{R^2}
+C\sigma^{-3}(1-b)^4R^2\right)
\,,\end{equation}
Here $m_0(b,R)$ is introduced in \eqref{eq-m0(b,R)}.
We get by dividing both sides in \eqref{eq-lim-E2-lb} by $(1-b)^2$ then taking $\displaystyle\liminf_{R\to\infty}$,
\begin{equation}\label{eq-lim-E2''}
\frac{g(b)}{(1-b)^2}\leq
\liminf_{R\to\infty}\frac{c(R)}{R^2}\,.
\end{equation}
Consequently, by taking $\displaystyle\limsup_{b\to1_-}$ on both sides of \eqref{eq-lim-E2''}, we get,
\begin{equation}\label{eq-liminf-E2}
\limsup_{b\to1_-}\frac{g(b)}{(1-b)^2}\leq
\liminf_{R\to\infty}\frac{c(R)}{R^2}\,.
\end{equation}
Using the upper bound for $m_{\rm p}(b,R)$ in Proposition~\ref{prop-proof-AS}, we infer from \eqref{eq-lim-E2-ub},
\begin{equation}\label{eq-lim-E2-ub'}
\frac{c(R)}{R^2}\leq
\frac1{1+2\sigma}\left(\frac{g(b)}{(1-b)^2}
+\frac{C}{(1-b)^{1/2}R}+C\sigma^{-3}(1-b)^2R^2\right)
\,.\end{equation}
Let $\varepsilon>0$ be given.
By definition of $\limsup$, there exists $\delta>0$ such that, if $1-\delta\leq b<1$, then
$\displaystyle\frac{g(b)}{(1-b)^2}\leq \limsup_{b\to 1_-} \frac{g(b)}{(1-b)^2}+\varepsilon$.
Inserting  this upper bound into \eqref{eq-lim-E2-ub'}, then selecting $b=1-R^{-5/4}$, we get for all $R\geq \delta^{-4/5}$,
$$\frac{c(R)}{R^2}\leq
\frac1{1+2\sigma}\left(\limsup_{b\to1_-}\frac{g(b)}{(1-b)^2}+\varepsilon+C R^{-3/8}
+C\sigma^{-3}R^{-1/2}\right)\,.$$
Taking successively  $\displaystyle\limsup_{R\to\infty}$,  then $\displaystyle\lim_{\varepsilon\to0_+}$ and $\displaystyle\lim_{\sigma\to0_+}$ on both sides of the above inequality, we get,
$$\limsup_{R\to\infty}\frac{c(R)}{R^2}\leq \limsup_{b\to1_-}\frac{g(b)}{(1-b)^2}\,.$$
Combining this inequality with that in \eqref{eq-liminf-E2}, we deduce that
\begin{equation}\label{eq-lim-E2-ub''}
E_2:=\lim_{R\to\infty}\frac{c(R)}{R^2}=\limsup_{b\to1_-}\frac{g(b)}{(1-b)^2}\,.
\end{equation}
We return to \eqref{eq-lim-E2-ub'} and select $R=(1-b)^{-3/4}$, so that $R\to\infty$ as $b\to1_-$. That way we get (for this choice of $R = R(b)$)
\begin{equation}\label{eq-lim-E2-ub'''}
\frac{c(R)}{R^2}\leq
\frac1{1+2\sigma}\left(\frac{g(b)}{(1-b)^2}
+C(1-b)^{1/4}+C\sigma^{-3}(1-b)^{1/4}\right)\,.
\end{equation}
Since $c(R)/R^2$ has a limit as $R\to\infty$ which is given in \eqref{eq-lim-E2-ub''}, we get by taking  $\displaystyle\liminf_{b\to1_-}$ on both sides of \eqref{eq-lim-E2-ub'''},
$$\limsup_{b\to1_-}\frac{g(b)}{(1-b)^2}= \lim_{R\to\infty}\frac{c(R)}{R^2}\leq \liminf_{b\to1_-}\frac{g(b)}{(1-b)^2}\,.$$

It results from the estimate \eqref{eq-prop-g(b)} in Theorem~\ref{thm-thmd-AS} that  $E_2\in[-\frac12,-\alpha]$. The constant $\alpha$ is universal and $\alpha\in(0,\frac12)$. This finishes the proof of the theorem.
 \end{proof}

\subsection{Three-dimensional limiting energy}\label{sec-LE3D}

Let us again consider a constant $b\geq0$. For any domain $\mathcal
D\subset \R^3$, we define the following Ginzburg-Landau energy,
\begin{equation}\label{eq-LF-3D}
F^{\rm 3D}_{b, \mathcal D}(u)=\int_{\mathcal D}\left(b|(\nabla-i\Fb)u|^2
-|u|^2+\frac1{2}|u|^4\right)\,dx\,.
\end{equation}
We will sometimes omit the parameter $b$ from the notation and write $F^{\rm 3D}_{\mathcal D}$
instead of $F^{\rm 3D}_{b, \mathcal D}$.

Here $\Fb$ is the canonical magnetic potential,
\begin{equation}\label{eq-hc2-Fb}
\Fb(x_1,x_2,x_3)=\frac12(-x_2,x_1,0)\,,\quad\forall~x=(x_1,x_2,x_3)\in
\R^3\,.\end{equation}
We introduce the ground state energy,
\begin{equation}\label{eq-m0-3D}
M_0(b,R)=\inf_{u\in H^1_0(Q_R;\C)} F^{\rm 3D}_{b,Q_R}(u)\,.
\end{equation}

Surprisingly, we find that the thermodynamic limit of the functional
$F^{\rm 3D}_{\mathcal D}$ is equal to the corresponding two-dimensional limit.

\begin{thm}\label{thm-3D-thmd}
Suppose $R>0$, $Q_R=(-R/2,R/2)\times K_R\subset \R^3$ and $K_R=(-R/2,R/2)\times(-R/2,R/2)\subset\R^2$. The following  statements hold true.
\begin{enumerate}
\item If $b\geq1$, then for all $R>0$,
 $M_0(b,R)=0$\,.
\item There exists a universal constant $M>0$ such that, for all $b\geq 0$ and $R>0$, we have,
$$R\,m_0(b,R)\leq M_0(b,R)\leq (R-2) m_0(b,R)+M\,,$$
where $m_0(b,R)$ is the ground state energy introduced in \eqref{eq-m0(b,R)}.
\item If $0<b<1$, then
$$\lim_{R\to\infty}\frac{M_0(b,R)}{|Q_R|}=g(b)\,,$$
where $g(b)$ is the constant from \eqref{eq-g(b)}.
\end{enumerate}
\end{thm}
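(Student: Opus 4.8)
The plan is to reduce everything to the two-dimensional functional $G_{b,K_R}$ by exploiting the product structure of $\Fb=\frac12(-x_2,x_1,0)$, which has no component along $x_3$ and does not depend on $x_3$. Writing a point of $Q_R$ as $(x_1,x_2,x_3)$ with $(x_1,x_2)\in K_R$ and $x_3\in(-R/2,R/2)$, the covariant gradient $(\nabla-i\Fb)u$ splits into the planar part $(\nabla_{x_1,x_2}-i\Ab_0)u$ and the free derivative $\partial_{x_3}u$. The first thing I would record is the fibered identity
\begin{equation*}
F^{\rm 3D}_{b,Q_R}(u)=\int_{-R/2}^{R/2}G_{b,K_R}\big(u(\cdot,\cdot,x_3)\big)\,dx_3+b\int_{Q_R}|\partial_{x_3}u|^2\,dx,
\end{equation*}
from which all three assertions are read off.

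For assertion (1), I extend $u\in H^1_0(Q_R)$ by zero to $\R^3$; since the bottom of the spectrum of $-(\nabla-i\Fb)^2$ on $L^2(\R^3)$ equals $1$ (the planar Landau value $1$ plus $0$ from the free direction), the variational principle gives $b\int|(\nabla-i\Fb)u|^2\ge b\int|u|^2\ge\int|u|^2$ when $b\ge1$, so $F^{\rm 3D}_{b,Q_R}(u)\ge\frac12\int|u|^4\ge0$, and with $F^{\rm 3D}_{b,Q_R}(0)=0$ this yields $M_0(b,R)=0$. For the lower bound in (2), I use that $u\in H^1_0(Q_R)$ restricts, for a.e. $x_3$, to a function in $H^1_0(K_R)$ (Fubini plus the vanishing of $u$ on the lateral boundary), so each slice energy is $\ge m_0(b,R)$; dropping the nonnegative term $b\int|\partial_{x_3}u|^2$ and integrating over the interval of length $R$ gives $F^{\rm 3D}_{b,Q_R}(u)\ge R\,m_0(b,R)$, hence $M_0(b,R)\ge R\,m_0(b,R)$.

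For the upper bound in (2) I would use a product test function. Let $v\in H^1_0(K_R)$ minimize $G_{b,K_R}$, so $G_{b,K_R}(v)=m_0(b,R)$ and $|v|\le1$, and let $\chi\in C_c^\infty(-R/2,R/2)$ satisfy $0\le\chi\le1$, $\chi\equiv1$ on $(-R/2+1,R/2-1)$, $|\chi'|\le C$. With $u=\chi(x_3)v(x_1,x_2)\in H^1_0(Q_R)$ the fibered identity gives
\begin{equation*}
F^{\rm 3D}_{b,Q_R}(u)=\Big(\int\chi^2\Big)m_0(b,R)+b\Big(\int|\chi'|^2\Big)\int_{K_R}|v|^2\,dx+\frac12\int(\chi^4-\chi^2)|v|^4\,dx.
\end{equation*}
Since $\int\chi^2\ge R-2$ and $m_0(b,R)\le0$, the first term is $\le(R-2)m_0(b,R)$, and the last term is nonpositive; it remains to absorb the localization term into $M$. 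This is the step I expect to be the main obstacle. A crude bound with $|v|\le1$ only gives $O(bR^2)$, because $\chi'$ lives on a slab whose volume is comparable to the whole cross-sectional area $R^2$ — in sharp contrast to the two-dimensional localization errors of Lemma~\ref{lem-monton'}, which are cheap because the cutoff frame there is only perimeter-thick. To extract a constant uniform in $b$ and $R$, the route I would pursue is to replace the product profile in the free variable by the minimizer of the effective one-dimensional functional $bN^2\int|\psi'|^2\,dx_3+m_0(b,R)\int(2\psi^2-\psi^4)\,dx_3$ with $N^2=\int_{K_R}|v|^2$, and to play the positive domain-wall cost against the negative excess $-2m_0(b,R)=\int_{K_R}|v|^4$ gained in the difference between $(R-2)m_0$ and $(\int\chi^2)m_0$, using the a priori relation $\int_{K_R}|v|^4\le(1-b)\int_{K_R}|v|^2$ obtained by testing $-b(\nabla-i\Ab_0)^2v=(1-|v|^2)v$ against $v$ together with the planar Landau bound. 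Establishing that this net cap cost is genuinely $O(1)$ uniformly (rather than merely $o(R^3)$) is the delicate heart of the argument.

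Finally, assertion (3) follows by squeezing. From (2),
\begin{equation*}
\frac{m_0(b,R)}{R^2}\le\frac{M_0(b,R)}{|Q_R|}\le\frac{R-2}{R}\,\frac{m_0(b,R)}{R^2}+\frac{M}{R^3},
\end{equation*}
and since $m_0(b,R)/R^2\to g(b)$ as $R\to\infty$ by Theorem~\ref{thm-thmd-AS}, both ends converge to $g(b)$, giving $\lim_{R\to\infty}M_0(b,R)/|Q_R|=g(b)$. I note that this last step is robust: even an upper bound of the form $(R-2)m_0(b,R)+O(R^2)$ would suffice for (3), so the full strength of the uniform constant $M$ is needed only for the sharper two-sided estimate in (2).
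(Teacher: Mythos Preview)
Your treatment of assertion~(1), the lower bound in~(2), and assertion~(3) is identical to the paper's: same fibered decomposition, same use of the Landau lower bound for the spectral gap, same slice argument, same product test function, same squeezing. Nothing to add there.

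On the upper bound in~(2) you have in fact been more careful than the paper. The paper uses exactly the crude route you describe: it takes the product trial state $f(x)=u_b(x_1,x_2)\chi_R(x_3)$, invokes $\chi_R^4\le\chi_R^2$ and $|u_b|\le1$, and then writes
\[
F^{\rm 3D}_{Q_R}(f)\le 2M+(R-2)\,m_0(b,R),
\]
with $M$ the constant bounding $|\chi_R'|$. But the localization term is $b\big(\int|\chi_R'|^2\big)\int_{K_R}|u_b|^2$, and $\int_{K_R}|u_b|^2$ is only bounded by $R^2$, not by $1$; so the argument as written in the paper actually delivers only
\[
M_0(b,R)\le (R-2)\,m_0(b,R)+Cb R^2,
\]
which is precisely the $O(bR^2)$ bound you flagged. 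The uniform constant $M$ in the statement of~(2) is not achieved by the paper's proof.

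This discrepancy is harmless for the rest of the paper: every downstream use of the upper bound (assertion~(3) here, and the estimates $M_0(b,R)\le g(b)R^3+CR^2$ invoked in Propositions~\ref{prop-ub} and~\ref{prop-ub-c0}) only requires the $O(R^2)$ version, exactly as you note in your last paragraph. So your proposed refinement via the effective one-dimensional variational problem is not needed, and you should not spend effort on it; simply record the upper bound with $O(bR^2)$ (or $O(R^2)$ for $b\le1$) and move on.
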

\begin{proof}
We start by proving the statement corresponding to $b\geq1$. Using the configuration $u=0$ as a test function, we get obviously that
$$\inf_{u\in H^1_0(Q_R)} F^{\rm 3D}_{Q_R}(u)\leq 0\,.$$
Let $u\in H^1_0(Q_R)$. To get a lower bound,  it is sufficient to prove that
$F^{\rm 3D}_{Q_R}(u)\geq 0$.
We extend $u$ to a function $\widetilde u\in H^1(\R^3)$ by setting $\widetilde u=0$ in $\R^3\setminus Q_R$. Then,
\begin{equation}\label{eq-LE-1}
F^{\rm 3D}_{Q_R}(u)=\int_{\R^3}\left(b|(\nabla-i\Fb)\widetilde u|^2-|\widetilde u|^2+\frac12|\widetilde u|^4\right)\,dx\,.\end{equation}
Recall that  the bottom of the spectrum of the magnetic Schr\"odinger operator $(-\nabla-i\Fb)^2$ in $L^2(\R^3)$ is equal to $1$. Using the variational min-max principle, this gives the following lower bound,
$$\int_{\R^3}|(\nabla-i\Fb)\widetilde u|^2\,dx\geq \int_{\R^3}   |\widetilde u|^2\,dx\,.$$
Inserting this into \eqref{eq-LE-1}, we get that $F^{\rm 3D}_{Q_R}(u)\geq 0$.

We prove the statement corresponding to $0<b<1$. First we mention that if $x=(x_1,x_2,x_3)\in\R^3$, then we write
$x=(x_\bot,x_3)$ and $\nabla_{x_\bot}=(\partial_{x_1},\partial_{x_2})$.
Let $u\in H^1_0(Q_R)$. Notice that
\begin{align*}
|(\nabla-i\Fb)u|^2=|(\nabla_{x_\bot}-i\Ab_0)u|^2+|\partial_{x_3}u|^2
\geq|(\nabla_{x_\bot}-i\Ab_0)u|^2\,,
\end{align*}
This gives the following lower bound,
$$F^{\rm 3D}_{Q_R}(u)\geq \int_{-R/2}^{R/2} G_{K_R}\big(u(\cdot,x_3)\big)\,dx_3\,,$$
where $K_R=(-R/2,R/2)\times(-R/2,R/2)$ and $G_{K_R}$ is the functional introduced in \eqref{eq-LF-2D}. Since $u\in H^1_0(Q_R)$, then $u(\cdot,x_3)\in H^1_0(K_R)$, for $x_3\in(-R/2,R/2)$ almost everywhere. So we can write
$G_{K_R}\big(u(\cdot,x_3)\big)\geq m_0(b,R)$.

Consequently, we deduce that
$$M_0(b,R)\geq R\, m_0(b,R)\,.$$
To get a matching upper bound, we consider the test function
$f(x_1,x_2,x_3)=u_b(x_1,x_2) \chi_{R}(x_3)$, where $u_b(x_1,x_2)$ is
a minimizer of $G_{K_R}$ and the function $\chi_{R}(x_3)$ satisfies,
$$\chi_R\in C_c^\infty(\R)\,,\quad
0\leq \chi_R \leq 1, \quad
{\rm supp}\,\chi_R\in[-R/2,R/2]\,,\quad \chi_R=1{\rm ~in~}[-\frac{R}{2}+1,\frac{R}2-1]\,,$$
and $|\chi_R'|\leq M$ in $\R$, for some universal constant $M$.

We estimate the energy $F^{\rm 3D}_{Q_R}(f)$. We get, using the pointwise bound $\chi_R^4 \leq \chi_R^2$,
$$F^{\rm 3D}_{Q_R}(f)\leq \left(\int_{K_R}|u_b|^2\,dx_1dx_2\right)\int_{-R/2}^{R/2}|\chi_{R}'(x_3)|^2\,dx_3
+G_{K_R}(u_b)\int_{-R/2}^{R/2}|\chi_{R}(x_3)|^2\,dx_3\,.$$
By \eqref{eq:MaxPr} we have $|u_b|\leq 1$.
Also, using the properties of $\chi_R$ we get,
$$\int_{-R/2}^{R/2}|\chi_{R}'(x_3)|^2\,dx_3\leq 2M\,,\quad R-2\leq\int_{\R}|\chi_R(x_3)|^2\,dx_3\leq R\,.$$
 Thus, we get,
$$F^{\rm 3D}_{Q_R}(f)\leq 2M
+(R-2)G_{K_R}(u_b) \,.$$
Consequently, we obtain,
$$M_0(b,R)\leq 2M
+(R-2)m_0(b,R)\,.$$
This proves statement  (2) in Theorem~\ref{thm-3D-thmd}.

The last statement in Theorem~\ref{thm-3D-thmd} results straightforwardly from the inequality in the second statement.
Actually, we divide both sides of the inequlity by $R^3$ then we take
 $R\to\infty$.
\end{proof}

\section{A priori estimates}\label{sec-apest}
The aim of this section is to give {\it a priori} estimates on the solutions of the Ginzburg-Landau equations \eqref{eq-3D-GLeq}. Those estimates play an essential role in controlling the error resulting from various approximations.

The starting point is the following  $L^\infty$-bound  resulting from the maximum principle. Actually,
if $(\psi,\Ab)\in H^1(\Omega;\C)\times \dot H^1_{\Div,\Fb}(\R^3)$ is a solution of \eqref{eq-3D-GLeq}, then
\begin{equation}\label{eq-psi<1}
\|\psi\|_{L^\infty(\Omega)}\leq1\,.
\end{equation}

The set of estimates below is proved in \cite[Theorem~3.3 and Eq. (3.35)]{FH-p}
(see also \cite{Pa} for an earlier version).

\begin{thm}\label{thm-3D-apriori}~
\begin{enumerate}
\item Suppose $1\leq p\leq 6$. There exists a constant $C_p>0$ such that, if
$\kappa>0$, $H>0$ and $(\psi,\Ab)\in H^1(\Omega)\times \dot
H^1_{\Div,\Fb}(\R^3)$ is a solution of \eqref{eq-3D-GLeq}, then
\begin{equation}\label{eq-3D-ariori1}
\|\Ab-\Fb\|_{W^{2,p}(\Omega)}\leq C_p\frac{1+\kappa H+\kappa^2}{\kappa
  H}
\|\psi\|_{L^\infty(\Omega)}\|\psi\|_{L^2(\Omega)}\,.
\end{equation}
\item There exists a constant $C>0$ such that, if $\kappa>0$, $H>0$
  and $(\psi,\Ab)\in H^1(\Omega)\times \dot
H^1_{\Div,\Fb}(\R^3)$ is a solution of \eqref{eq-3D-GLeq}, then
\begin{equation}\label{eq-3D-apriori2}
\|\curl(\Ab-\Fb)\|_{L^2(\Omega)}
\leq\frac{C}{H}\|\psi\|_{L^\infty(\Omega)}\|\psi\|_{L^2(\Omega)}\,.\end{equation}
\end{enumerate}
\end{thm}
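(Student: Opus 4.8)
The plan is to reduce both estimates to standard elliptic regularity for a Poisson-type system and then feed in the a priori control coming from the Ginzburg--Landau equations themselves. Set $\ab=\Ab-\Fb$. Since $\curl\Fb=\beta$ is constant we have $\curl^2\Fb=0$, so the second equation in \eqref{eq-3D-GLeq} reads $\curl^2\ab=\mathbf J\,\mathbf 1_\Omega$ with $\mathbf J=-\frac1{\kappa H}\IM\big(\overline\psi\,(\nabla-i\kappa H\Ab)\psi\big)$. Because $\Div\Ab=\Div\Fb=0$ we have $\Div\ab=0$, hence $\curl^2\ab=-\Delta\ab$, and $\ab\in\dot H^1(\R^3)$ solves the Poisson system $-\Delta\ab=\mathbf J\,\mathbf 1_\Omega$ on $\R^3$. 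A few auxiliary facts will be used repeatedly: the divergence-free identity $\|\nabla\ab\|_{L^2(\R^3)}=\|\curl\ab\|_{L^2(\R^3)}$, the Sobolev embedding $\dot H^1(\R^3)\hookrightarrow L^6(\R^3)$, and the energy identity obtained by testing the first equation against $\psi$, namely $\|(\nabla-i\kappa H\Ab)\psi\|_{L^2(\Omega)}^2=\kappa^2\int_\Omega(1-|\psi|^2)|\psi|^2\,dx\le\kappa^2\|\psi\|_{L^2(\Omega)}^2$.

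For the second estimate \eqref{eq-3D-apriori2} I would test $\curl^2\ab=\mathbf J\,\mathbf 1_\Omega$ against $\ab$, which is legitimate since $\ab\in\dot H^1$; this gives $\|\curl\ab\|_{L^2(\R^3)}^2=\int_\Omega\mathbf J\cdot\ab\,dx$. By Cauchy--Schwarz and the pointwise bound $|\mathbf J|\le\frac1{\kappa H}|\psi|\,|(\nabla-i\kappa H\Ab)\psi|$, the right-hand side is at most $\frac1{\kappa H}\|\psi\|_{L^\infty(\Omega)}\,\|(\nabla-i\kappa H\Ab)\psi\|_{L^2(\Omega)}\,\|\ab\|_{L^2(\Omega)}$. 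The energy identity controls the covariant gradient by $\kappa\|\psi\|_{L^2}$, while $\|\ab\|_{L^2(\Omega)}\le|\Omega|^{1/3}\|\ab\|_{L^6(\R^3)}\le C\|\nabla\ab\|_{L^2(\R^3)}=C\|\curl\ab\|_{L^2(\R^3)}$ by H\"older, Sobolev and the divergence-free identity. Dividing by $\|\curl\ab\|_{L^2}$ then yields $\|\curl\ab\|_{L^2}\le\frac{C}{H}\|\psi\|_{L^\infty}\|\psi\|_{L^2}$, which is \eqref{eq-3D-apriori2}.

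For the first estimate \eqref{eq-3D-ariori1} I would apply Calder\'on--Zygmund regularity to $-\Delta\ab=\mathbf J\,\mathbf 1_\Omega$, giving $\|\ab\|_{W^{2,p}(\Omega)}\le C_p\big(\|\mathbf J\|_{L^p(\Omega)}+\|\ab\|_{L^p(\Omega)}\big)$, where the lower-order term is absorbed using \eqref{eq-3D-apriori2} and Sobolev embedding. It remains to bound $\|\mathbf J\|_{L^p(\Omega)}\le\frac1{\kappa H}\|\psi\|_{L^\infty}\|(\nabla-i\kappa H\Ab)\psi\|_{L^p(\Omega)}$ for $2\le p\le6$. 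The crucial input is an $L^p$ bound on the covariant gradient, which I would extract from magnetic elliptic regularity for the first equation: from $\|(\nabla-i\kappa H\Ab)^2\psi\|_{L^2(\Omega)}=\|\kappa^2(1-|\psi|^2)\psi\|_{L^2(\Omega)}\le\kappa^2\|\psi\|_{L^2}$, together with the diamagnetic inequality and the magnetic Sobolev embedding into $L^6$, one controls $(\nabla-i\kappa H\Ab)\psi$ in $L^6(\Omega)$ by a constant of order $(1+\kappa H+\kappa^2)\|\psi\|_{L^2}$; interpolating with the $L^2$ energy bound covers all $2\le p\le6$. Collecting the prefactor $1/(\kappa H)$ from $\mathbf J$ then produces the stated factor $\frac{1+\kappa H+\kappa^2}{\kappa H}$.

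The \emph{main obstacle} is precisely this last step: obtaining the $L^p$ control of the covariant gradient for $p>2$ with the correct, explicit dependence on $\kappa$ and $H$. The energy identity furnishes only $L^2$ information, so one must run a careful magnetic second-order elliptic estimate for the Neumann problem associated with the first Ginzburg--Landau equation, tracking how the constants in the magnetic Sobolev inequalities and in the boundary regularity depend on the field strength $\kappa H$ and on the potential bound $\kappa^2$. Handling the boundary terms generated by the Neumann condition, and ensuring the constants are uniform in the gauge and in the geometry of $\Omega$, is the technically delicate part; this is where the detailed analysis of \cite{FH-p} is needed.
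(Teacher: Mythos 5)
The paper does not actually prove this theorem: it is quoted directly from \cite[Theorem~3.3 and Eq.~(3.35)]{FH-p}, so there is no internal argument to compare yours against. Your proof of part (2) is complete and correct, and it is the standard one: testing the second equation of \eqref{eq-3D-GLeq} against the divergence-free field $\ab=\Ab-\Fb\in\dot H^1(\R^3)$, using the energy identity $\|(\nabla-i\kappa H\Ab)\psi\|_{L^2(\Omega)}\le\kappa\|\psi\|_{L^2(\Omega)}$ (the boundary term vanishes by the Neumann condition), and closing with the chain $\|\ab\|_{L^2(\Omega)}\le|\Omega|^{1/3}\|\ab\|_{L^6(\R^3)}\le C\|\nabla\ab\|_{L^2(\R^3)}=C\|\curl\ab\|_{L^2(\R^3)}$. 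This in fact yields the stronger $L^2(\R^3)$ bound, which is the version used elsewhere in the paper.

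Part (1), however, has a genuine gap, and it sits exactly at the step you flag as the ``main obstacle.'' The claimed bound $\|(\nabla-i\kappa H\Ab)\psi\|_{L^6(\Omega)}\le C(1+\kappa H+\kappa^2)\|\psi\|_{L^2(\Omega)}$ does not follow from ``the diamagnetic inequality and the magnetic Sobolev embedding'' applied to $\|(\nabla-i\kappa H\Ab)^2\psi\|_{L^2}\le\kappa^2\|\psi\|_{L^2}$. Setting $v=(\nabla-i\kappa H\Ab)\psi$, the Kato/Sobolev route requires $\|\nabla|v|\,\|_{L^2}$, hence the \emph{full} covariant Hessian $(\partial_j-i\kappa HA_j)(\partial_k-i\kappa HA_k)\psi$ in $L^2$, whereas the equation controls only its trace. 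Converting trace control into Hessian control by integration by parts produces two contributions that are precisely the hard part: (i) commutator terms of size $\kappa H|\curl\Ab|$, where $\curl\Ab-\beta$ is a priori only in $L^2$ (so one must bootstrap, feeding part (2) and successively better regularity of $\Ab$ back into the estimate), and (ii) boundary integrals which do \emph{not} vanish under the Neumann condition and involve the curvature of $\partial\Omega$ paired with traces of $|v|^2$. Obtaining these two contributions with constants uniform in $\kappa$ and $H$ is exactly the content of the cited result in \cite{FH-p}; your argument therefore reduces the theorem to (essentially) the theorem being proved. The surrounding reductions you perform are sound --- the Calder\'on--Zygmund step for $-\Delta\ab=\mathbf J\,\mathbf 1_\Omega$, the absorption of lower-order terms via part (2), the bookkeeping showing the prefactor $(1+\kappa H+\kappa^2)/(\kappa H)$ and the $\|\psi\|_{L^\infty}$ factor coming from the quadratic structure of $\mathbf J$, and the cases $1\le p\le 2$ by H\"older --- but as a proof of \eqref{eq-3D-ariori1} the proposal is incomplete exactly where the paper's citation does the work.
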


Theorem~\ref{thm-3D-apriori} is needed in order to obtain the improved {\it a priori} estimates of the next theorem. Similar estimates are given in \cite{Pa}.

\begin{thm}\label{thm-3D-ad-est}
Suppose that $0<\Lambda_{\min}\leq \Lambda_{\max}$. There exist constants $\kappa_0>1$ and
$C_1>0$ such that, if
$$\kappa\geq\kappa_0\,,\quad \Lambda_{\min}\leq\frac\kappa{H}\leq \Lambda_{\max}\,,$$
and $(\psi,\Ab)\in H^1(\Omega;\C)\times \dot H^1_{\Div,\Fb}(\R^3)$
is a solution of \eqref{eq-3D-GLeq}, then
\begin{align}
&\|(\nabla-i\kappa H\Ab)\psi\|_{C(\overline{\Omega})}
\leq C_1\sqrt{\kappa H}\|\psi\|_{L^\infty(\Omega)}\,,\label{eq-est1}\\
&
\|\Ab-\Fb\|_{W^{2,6}(\Omega)}\leq
C_1\left(\|\curl(\Ab-\Fb)\|_{L^2(\R^3)}+\frac1{\sqrt{\kappa H}}
\|\psi\|_{L^6(\Omega)}\|\psi\|_{L^\infty(\Omega)}
\right),\label{eq-est2'}\\
&\|\Ab-\Fb\|_{C^{1,1/2}(\overline{\Omega})}\leq C_1
\left(\|\curl(\Ab-\Fb)\|_{L^2(\R^3)}+\frac1{\sqrt{\kappa H}}\|\psi\|_{L^6(\Omega)}\|\psi\|_{L^\infty(\Omega)}\right)\,.
\label{eq-est2}
\end{align}
\end{thm}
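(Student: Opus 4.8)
The plan is to establish the three bounds in the stated order, treating the pointwise gradient estimate \eqref{eq-est1} as the crux and deriving \eqref{eq-est2'} and \eqref{eq-est2} from it. Throughout I work in the regime $H\approx\kappa$ forced by $\Lambda_{\min}\le\kappa/H\le\Lambda_{\max}$, so that $\kappa H\approx\kappa^2$ and $\sqrt{\kappa H}\approx\kappa$, and I set $\ab=\Ab-\Fb$ and $\eta=\kappa H$. I will use freely the rough estimates of Theorem~\ref{thm-3D-apriori}: since $\|\psi\|_{L^\infty}\le1$ by \eqref{eq-psi<1} and $\|\psi\|_{L^2}\le|\Omega|^{1/2}$, the prefactor $(1+\kappa H+\kappa^2)/(\kappa H)$ is $O(1)$ in this regime, whence $\|\ab\|_{W^{2,p}(\Omega)}=O(1)$ for each fixed $p$, and by Sobolev embedding $\|\ab\|_{C^{1,\alpha}(\overline\Omega)}=O(1)$. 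In particular the induced field $\curl\Ab=\beta+\curl\ab$ stays bounded in $C^{0,\alpha}(\overline\Omega)$ uniformly.

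For \eqref{eq-est1} I would argue by a blow-up at the magnetic length scale $\eta^{-1/2}$. Fix $x_0\in\overline\Omega$ and rescale $y=\sqrt\eta\,(x-x_0)$; after a gauge transformation removing the constant and pure-gradient parts of $\eta\Ab$ at $x_0$, the function $v(y)=e^{-i\eta\varphi}\psi$ solves a magnetic equation $-(\nabla_y-i\widehat{\mathbf A})^2v=\tfrac{\kappa^2}{\eta}(1-|v|^2)v$ on a ball (or half-ball, if $x_0$ is near $\partial\Omega$) of radius $\sim\sqrt\eta$. Here the rescaled potential $\widehat{\mathbf A}$ has curl equal to $(\curl\Ab)(x_0+y/\sqrt\eta)$, which is $O(1)$ in $C^{0,\alpha}$ on fixed $y$-balls by the previous paragraph, so $\widehat{\mathbf A}$ has uniformly bounded H\"older norm on unit balls; the potential $\tfrac{\kappa^2}{\eta}(1-|v|^2)$ is bounded by $\Lambda_{\max}$ since $|v|=|\psi|\le1$; and $\|v\|_{L^\infty}\le\|\psi\|_{L^\infty}$. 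Interior and boundary elliptic regularity (the magnetic Neumann condition $\nu\cdot(\nabla-i\eta\Ab)\psi=0$ rescales to a magnetic Neumann condition on a boundary that is $C^2$-close to flat at scale $\sqrt\eta$) then give $\|(\nabla_y-i\widehat{\mathbf A})v\|_{L^\infty(B(0,1/2))}\le C\|v\|_{L^\infty(B(0,1))}\le C\|\psi\|_{L^\infty}$ with $C$ independent of $x_0$. Undoing the scaling, $|(\nabla-i\eta\Ab)\psi(x_0)|=\sqrt\eta\,|(\nabla_y-i\widehat{\mathbf A})v(0)|\le C\sqrt{\kappa H}\,\|\psi\|_{L^\infty}$, and since $x_0$ is arbitrary this is \eqref{eq-est1}. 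The main obstacle here is making the elliptic constants genuinely uniform in $x_0$ near $\partial\Omega$, i.e. a boundary Schauder/$L^p$ estimate for the magnetic Neumann problem whose constants depend only on the (rescaled, hence flattening) geometry and on the uniform bounds for $\widehat{\mathbf A}$.

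For \eqref{eq-est2'} I would use the second Ginzburg--Landau equation. Since $\Div\ab=\Div\Ab-\Div\Fb=0$ and $\curl^2=\nabla\Div-\Delta$, the field $\ab$ solves $-\Delta\ab=S$ on $\R^3$, where $S=-\tfrac1{\kappa H}\IM\!\big(\overline\psi\,(\nabla-i\kappa H\Ab)\psi\big)\mathbf 1_\Omega$ (the constant field $\beta$ gives $\curl^2\Fb=0$). Using \eqref{eq-est1},
\[
\|S\|_{L^6(\R^3)}\le\frac1{\kappa H}\,\|(\nabla-i\kappa H\Ab)\psi\|_{C(\overline\Omega)}\,\|\psi\|_{L^6(\Omega)}\le\frac{C}{\sqrt{\kappa H}}\,\|\psi\|_{L^\infty(\Omega)}\,\|\psi\|_{L^6(\Omega)}.
\]
Elliptic $W^{2,6}$ regularity for $-\Delta$ then yields $\|\ab\|_{W^{2,6}(\Omega)}\le C(\|S\|_{L^6}+\|\ab\|_{L^6})$; and since $\ab\in\dot H^1(\R^3)$ the Sobolev embedding $\dot H^1(\R^3)\hookrightarrow L^6(\R^3)$ together with the identity $\|\nabla\ab\|_{L^2}^2=\|\curl\ab\|_{L^2}^2+\|\Div\ab\|_{L^2}^2=\|\curl\ab\|_{L^2}^2$ gives $\|\ab\|_{L^6(\R^3)}\le C\|\curl\ab\|_{L^2(\R^3)}$. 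Combining these produces \eqref{eq-est2'}, and \eqref{eq-est2} follows at once by the Sobolev embedding $W^{2,6}(\Omega)\hookrightarrow C^{1,1/2}(\overline\Omega)$, valid in dimension $3$ with H\"older exponent $1-3/6=1/2$. The only point needing care here is the jump of $S$ across $\partial\Omega$: it is immaterial for the second-derivative $L^6$ bound (which uses only $S\in L^6$), and since $S\in L^\infty$ one has $\ab\in C^{1,\alpha}(\R^3)$, so $\nabla\ab$ stays continuous across $\partial\Omega$ and the final H\"older embedding applies up to the boundary.
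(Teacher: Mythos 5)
Your proposal is correct, and for the two potential estimates \eqref{eq-est2'}--\eqref{eq-est2} it coincides with the paper's argument: the paper likewise writes $-\Delta a=\frac1{\kappa H}\IM(\overline\psi(\nabla-i\kappa H\Ab)\psi)\mathbf 1_\Omega$ with $a=\Ab-\Fb$, bounds the right-hand side in $L^6$ using \eqref{eq-est1}, invokes the curl-div bound $\|a\|_{L^6(\R^3)}\leq C\|\curl a\|_{L^2(\R^3)}$ (you derive it cleanly from $\dot H^1\hookrightarrow L^6$ and $\|\nabla a\|_{L^2}=\|\curl a\|_{L^2}$ for divergence-free fields, whereas the paper cites it), and concludes by elliptic regularity and the embedding $W^{2,6}(\Omega)\hookrightarrow C^{1,1/2}(\overline\Omega)$. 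Where you genuinely diverge is \eqref{eq-est1}: you give a direct quantitative proof (rescale at the magnetic length $\eta^{-1/2}$, gauge away $\Ab(x_0)$, observe that the rescaled equation has uniformly bounded coefficients, apply uniform interior/boundary elliptic estimates, undo the scaling), while the paper argues by contradiction and compactness: assuming the ratio $|(\nabla-i\kappa_nH_n\Ab_n)\psi_n(P_n)|/(\sqrt{\kappa_nH_n}\|\psi_n\|_\infty)$ diverges along a sequence, it performs the same blow-up, extracts $C^1_{\rm loc}$ limits in the interior and boundary cases separately, and contradicts divergence since the limit of the ratio is the finite quantity $|(\nabla-i\widetilde\Fb)\varphi(0)|$. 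The trade-off is real: your route yields an explicit, sequence-free estimate but requires boundary Schauder/$W^{2,p}$ estimates for the magnetic Neumann problem with constants uniform over all blow-up centers (you flag this, correctly, as the crux; the flattening of $\partial\Omega$ under rescaling is what makes it true), whereas the paper's compactness route only needs elliptic estimates strong enough for local precompactness along one sequence, at the price of being non-quantitative and of outsourcing those details to \cite{FH-p} and \cite[Lemma~12.5.4]{FH-b} --- so neither argument is more self-contained than the other on this technical point. One small merit of your write-up worth keeping: the observation that the homogeneity in $\|\psi\|_{L^\infty}$ survives the elliptic bootstrap because every term of the rescaled equation is linear in $v$ or bounded by $\Lambda_{\max}|v|$, which is exactly why the final bound is $C\sqrt{\kappa H}\,\|\psi\|_{L^\infty}$ rather than merely $C\sqrt{\kappa H}$.
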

\begin{proof}\

{\bf Proof of \eqref{eq-est1}:}

Suppose the estimate \eqref{eq-est1} were false. Then, there exists a sequence of points $(P_n)\subset\Omega$, two real sequences $(\kappa_n)$, $(H_n)$,
a sequence of solutions $(\psi_n,\Ab_n)$ of \eqref{eq-3D-GLeq} and a constant $\Lambda\in[\Lambda_{\min},\Lambda_{\max}]$ such that,
\begin{equation}\label{eq-proof-assumption}
\frac{|(\nabla-i\kappa_nH_n\Ab_n)\psi_n(P_n)|}{\sqrt{\kappa_n H_n}\,\|\psi_n\|_{L^\infty(\Omega)}}\to\infty\,,
\end{equation}
$$\kappa_n\to+\infty\,,\quad \frac{\kappa_n}{H_n}\to\Lambda\quad{\rm as}\quad n\to\infty\,.$$
Let $S_n=\|\psi_n\|_{L^\infty(\Omega)}$. Define the re-scaled functions,
$$\varphi_n(y)=S_n^{-1}\,e^{-i\sqrt{\kappa_nH_n}\,\Ab_n(P_n)\cdot y}\,\psi_n\left(P_n+\frac{y}{\sqrt{\kappa_nH_n}}\right)\,,$$
and
$$\Ab_n(y)=\frac{\Ab_n(P_n+y/\sqrt{\kappa_nH_n})-\Ab_n(y)}{1/\sqrt{\kappa_nH_n}}\,.$$
Two cases may occur:\\

{\it Case 1:}\hskip0.5cm
$\sqrt{\kappa_nH_n}\,{\rm dist}(P_n,\partial\Omega)$ is an unbounded sequence.\\

{\it Case 2:} \hskip0.5cm  $\sqrt{\kappa_nH_n}\,{\rm dist}(P_n,\partial\Omega)$ is a bounded sequence.\\

In both Cases~1 and 2, we apply a compactness argument by using elliptic estimates then a diagonal sequence argument to select a limiting function. Actually, we refer to \cite[Section~4.3]{FH-p} for the detailed proof in the two dimensional case, and to \cite[Lemma~12.5.4]{FH-b} for a precise statement in the three dimensional case.

Therefore, in Case~1,
we can extract a subsequence, still denoted $\varphi_n$, a function $\varphi\in C^1(\R^3)$ and a vector field $\widetilde \Fb\in C(\R^3)$ such that
$$\varphi_n\to\varphi\quad{\rm in}\quad C^1(K)\,,\quad \Ab_n\to\widetilde \Fb\quad {\rm in}\quad C(K)\,,$$
for any compact set $K\subset \R^3$. In particular, we get that,
$$
\frac{|(\nabla-i\kappa_nH_n\Ab_n)\psi_n(P_n)|}{\sqrt{\kappa_n H_n}\,\|\psi_n\|_{L^\infty(\Omega)}}\to
|(\nabla-i\widetilde\Fb)\varphi(0)|\,,$$
which is contradictory with \eqref{eq-proof-assumption}.

Similarly, in Case~2, we can extract a subsequence, still denoted $\varphi_n$, a function $\varphi\in C^1(\R^3_+)$ and a vector field $\widetilde \Ab\in C(\R^3_+)$ such that
$$\varphi_n\to\varphi\quad{\rm in}\quad C^1(K)\,,\quad \Ab_n\to\widetilde \Ab\quad {\rm in}\quad C(K)\,,$$
for any compact set $K\subset\overline{ \R^3_+}$\,. In particular, we get that,
$$
\frac{|(\nabla-i\kappa_nH_n\Ab_n)\psi_n(P_n)|}{\sqrt{\kappa_n H_n}\,\|\psi_n\|_{L^\infty(\Omega)}}\to
|(\nabla-i\widetilde\Ab)\varphi(0)|\,,$$
which is again contradictory with \eqref{eq-proof-assumption}.
Therefore, the estimate \eqref{eq-est1} should be true.\\

{\bf Proof of \eqref{eq-est2'}:}\\
Let $a=\Ab-\Fb$.
Since $\Div a=0$, we get
by regularity of the curl-div system see e.g. \cite{GT} (or \cite[Theorem~D.3.1]{FH-b} for a statement of the result),
\begin{equation}
\|a\|_{L^6(\R^3)}\leq C\|\curl a\|_{L^2(\R^3)}\,.\label{eq-est3}
\end{equation}
The second equation in \eqref{eq-3D-GLeq} reads as follows,
$$-\Delta a=\frac1{\kappa H}\IM(\overline\psi\,(\nabla-i\kappa H\Ab)\psi)\mathbf 1_\Omega\,.$$
By elliptic estimates (see e.g. \cite{GT} or \cite[Theorem~E.4.2]{FH-b}),
$$\|a\|_{W^{2,6}(\Omega)}\leq C(\|a\|_{L^6(\R^3)}+\|\Delta a\|_{L^6(\R^3)})\,.$$
The estimates in \eqref{eq-est1}  and \eqref{eq-est3} now give,
$$\|a\|_{W^{2,6}(\Omega)}\leq C\left(\|\curl a\|_{L^2(\R^3)}+\frac{C}{\sqrt{\kappa H}}\,\|\psi\|_{L^6(\Omega)}\|\psi\|_{L^{\infty}(\Omega)}\right)\,.$$

{\bf Proof of \eqref{eq-est2}:}\\
This is a consequence  of the Sobolev embedding of $W^{2,6}(\Omega)$ into $C^{1,1/2}(\Omega)$ and \eqref{eq-est2'}.
\end{proof}

The next result  is a rather weak  $L^\infty$-bound valid for all critical points of $\mathcal E^{\rm 3D}$ provided that the magnetic field strength $H$ is close to $H_{C_2}$.

\begin{thm}\label{thm-Linfty}
Suppose that $g_1:\R_+\to\R_+$ is a function such that
$$g_1(\kappa)\to\infty\,,\quad\frac{g_1(\kappa)}{\kappa}\to0\quad{\rm as}~\kappa\to\infty\,.$$
There exists a function $g_2:\R_+\to\R_+$ satisfying
$$g_2(\kappa)\to\infty\quad{\rm as}~\kappa\to\infty\,,$$
such that if $\kappa\geq 1$, $|H-\kappa|\leq g_1(\kappa)$ and $(\psi,\Ab)\in H^1(\Omega;\C)\times \dot H^1_{\Div,\Fb}(\R^3)$ is a critical point of the energy in \eqref{eq-3D-GLf}, then,
$$\|\psi\|_{L^\infty(\omega_\kappa)}\leq g_2(\kappa)\,.$$
Here
$$\omega_\kappa=\{x\in\Omega~:~{\rm dist}(x,\partial\Omega)\geq g_1(\kappa)/\kappa\}\,.$$
\end{thm}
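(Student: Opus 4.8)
The plan is to argue by contradiction through a blow-up argument in the spirit of the proof of \eqref{eq-est1}, the new ingredient being the identification and elimination of the limiting profile. I will in fact establish the stronger statement that $\|\psi\|_{L^\infty(\omega_\kappa)}\to0$ uniformly over all critical points and all $H$ with $|H-\kappa|\leq g_1(\kappa)$; taking $g_2(\kappa)$ to be the supremum of these quantities then yields a function tending to $0$, which a fortiori satisfies the conclusion. Suppose this fails. Then there are a constant $\delta>0$, sequences $\kappa_n\to\infty$ and $H_n$ with $|H_n-\kappa_n|\leq g_1(\kappa_n)$, critical points $(\psi_n,\Ab_n)$ of \eqref{eq-3D-GLf}, and points $P_n\in\omega_{\kappa_n}$ with $|\psi_n(P_n)|\geq\delta$. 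Two features of the hypotheses on $g_1$ are used here. Since $g_1(\kappa)/\kappa\to0$, we have $H_n/\kappa_n\to1$, so the effective parameter $b$ approaches the critical value $1$; and since $g_1(\kappa)\to\infty$, the rescaled distance $\sqrt{\kappa_nH_n}\,{\rm dist}(P_n,\partial\Omega)\geq\sqrt{\kappa_nH_n}\,g_1(\kappa_n)/\kappa_n$ tends to $\infty$, so we are always in the interior blow-up regime, with limiting domain all of $\R^3$.

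Around each $P_n$ I rescale at the natural length scale $(\kappa_nH_n)^{-1/2}$, setting
\[
\varphi_n(y)=e^{-i\sqrt{\kappa_nH_n}\,\Ab_n(P_n)\cdot y}\,\psi_n\Big(P_n+\tfrac{y}{\sqrt{\kappa_nH_n}}\Big),
\]
together with the associated rescaled potential $\widetilde\Ab_n$, exactly as in the proof of Theorem~\ref{thm-3D-ad-est}. A direct computation shows that $\varphi_n$ solves $-(\nabla-i\widetilde\Ab_n)^2\varphi_n=\frac{\kappa_n}{H_n}(1-|\varphi_n|^2)\varphi_n$. The maximum principle bound \eqref{eq-psi<1} gives $|\varphi_n|\leq1$, the gradient estimate \eqref{eq-est1} controls $(\nabla-i\widetilde\Ab_n)\varphi_n$ uniformly on compact sets, and the potential estimate \eqref{eq-est2} yields $\widetilde\Ab_n\to\Fb$ in $C(K)$ for every compact $K\subset\R^3$. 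Feeding these into elliptic estimates and extracting a diagonal subsequence, I obtain $\varphi_n\to\varphi$ in $C^1_{\rm loc}(\R^3)$, where $\varphi$ is a bounded solution of the limiting ($b=1$) equation
\[
-(\nabla-i\Fb)^2\varphi=(1-|\varphi|^2)\varphi\ \ \text{in }\R^3,\qquad |\varphi|\leq1,\qquad |\varphi(0)|\geq\delta.
\]

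The heart of the argument, and the step I expect to be the main obstacle, is a Liouville-type statement: the only bounded solution of this $b=1$ equation on $\R^3$ is $\varphi\equiv0$, which contradicts $|\varphi(0)|\geq\delta$. The mechanism is that the bottom of the spectrum of $-(\nabla-i\Fb)^2$ on $L^2(\R^3)$ equals $1$ (recalled in the proof of Theorem~\ref{thm-3D-thmd}), which is exactly the coefficient of the linear term on the right-hand side. Formally, testing the equation against $\overline\varphi$ and using $\int|(\nabla-i\Fb)\varphi|^2\geq\int|\varphi|^2$ gives $\int|\varphi|^2\geq\int(1-|\varphi|^2)|\varphi|^2=\int|\varphi|^2-\int|\varphi|^4$, whence $\int|\varphi|^4\leq0$ and $\varphi\equiv0$. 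The delicate point is to make this rigorous for a solution that is merely bounded and need not decay: one must run the identity against a cut-off $\chi_R$ supported in $Q_{2R}$ and equal to $1$ on $Q_R$, carefully bookkeeping the localization errors and the cross terms by means of the uniform gradient control, so as to conclude that the $L^4$-average of $\varphi$ over $Q_R$ vanishes as $R\to\infty$. Upgrading this averaged vanishing to $\varphi\equiv0$ is precisely the difficulty that, in sharper form, the authors signal they cannot overcome in 3D; here it suffices to exploit that the field generated by $\Fb$ is constant and directed along $x_3$, reducing the limiting problem to the two-dimensional situation and invoking the interior $L^\infty$-bound \eqref{eq:FHinfty}, which is identically zero at $b=1$.

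Once the contradiction is reached, the construction of $g_2$ is immediate: defining $g_2(\kappa)$ as the supremum of $\|\psi\|_{L^\infty(\omega_\kappa)}$ over all $H$ with $|H-\kappa|\leq g_1(\kappa)$ and all critical points $(\psi,\Ab)$ of \eqref{eq-3D-GLf}, the blow-up argument above shows $g_2(\kappa)\to0$ as $\kappa\to\infty$, which in particular yields the asserted bound $\|\psi\|_{L^\infty(\omega_\kappa)}\leq g_2(\kappa)$.
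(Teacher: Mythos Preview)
Your overall strategy---contradiction, blow-up at scale $(\kappa_nH_n)^{-1/2}$ around an interior point $P_n\in\omega_{\kappa_n}$, compactness to produce a bounded nontrivial solution of $-(\nabla-i\Fb)^2\varphi=(1-|\varphi|^2)\varphi$ on $\R^3$---matches the paper's proof exactly. The paper then disposes of the limiting profile by citing Proposition~12.5.1 in \cite{FH-b}, which states precisely that the only bounded solution of this limiting equation on $\R^3$ is zero.

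Where you diverge is in attempting to reprove this Liouville theorem rather than cite it, and the sketch you give has a real gap. The cut-off identity is correct and yields $\int_{Q_R}|\varphi|^4\leq CR$, hence $R^{-3}\int_{Q_R}|\varphi|^4\to0$; but, as you yourself note, averaged vanishing does not imply $\varphi\equiv0$. Your proposed fix---``reducing to the two-dimensional situation and invoking \eqref{eq:FHinfty}''---does not work as written: a bounded solution of the three-dimensional equation with field along $x_3$ need not be independent of $x_3$, so there is no direct reduction to a two-dimensional equation, and \eqref{eq:FHinfty} (together with its source, Theorem~3.1 of \cite{FH-jems}) is a result about solutions in $\R^2$. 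You also misread the paper's caveat: what the authors flag as open in 3D is the \emph{quantitative} bound of the type $\|\psi\|_{L^\infty}\leq C|1-\kappa/H|^{1/2}$ for the Ginzburg--Landau system itself, not the Liouville theorem for the limiting $b=1$ equation on $\R^3$, which is established in \cite{FH-b} and is exactly what the paper invokes here. Either cite that result, or supply the genuinely missing argument for why the averaged decay forces $\varphi\equiv0$ in three dimensions.
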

\begin{proof}
The proof is very close to Theorem~2.1 in \cite{FH-jems}. We give the
details for the reader's convenience.

Suppose that the conclusion of Theorem~\ref{thm-Linfty} were
false. Then,   we may find  a number $N>1$ and sequences $\kappa_n$, $H_n$,
$(\psi_n,\Ab_n)$  such that,
$$\kappa_n\to+\infty\,,\quad \frac{\kappa_n}{H_n}\to 1\quad{\rm
  as}\quad
n\to+\infty\,,$$
\begin{equation}\label{eq-3D-psin}
N^{-1}
\leq \|\psi_n\|_{L^\infty(\omega_{\kappa_n})}\leq
1\,,\end{equation}
and $(\psi_n,\Ab_n)$ is a solution of \eqref{eq-3D-GLeq} for
$(\kappa,H)=(\kappa_n,H_n)$.

It results from Theorem~\ref{thm-3D-apriori} that
$\|\Ab_n-\Fb\|_{W^{2,6}(\Omega)}\leq C_1$ for some constant
$C_1>0$. Using the compactness of the embedding of $W^{1,6}(\Omega)$
in $C^{1,\alpha}(\Omega)$ for $0<\alpha<\frac12$, we may select a
susbsequence, still denoted $(\psi_n,\Ab_n)$, and a function
$\widetilde\Ab\in C^{1,\alpha}(\overline{\Omega})$ such that
\begin{equation}\label{eq-3D-An}
\Ab_n\to\widetilde \Ab\quad{\rm in}\quad C^{1,\alpha}(\overline{\Omega})\,.\end{equation}
It results from the estimate in \eqref{eq-3D-apriori2} and the assumption $H_n\to\infty$ that
\begin{equation}\label{eq-3D-Ab}
\curl\widetilde\Ab=1\quad{\rm in}~\overline{\Omega}\,.\end{equation}
Let $P_n\in\omega_{\kappa_n}$ be a point satisfying
$|\psi_n(P_n)|=\|\psi_n\|_{L^\infty(\omega_{\kappa_n})}$. Using
\eqref{eq-3D-psin}, we deduce that,
$$N^{-1}\leq|\psi(P_n)|\leq 1\,.$$
We may select a new subsequence, still denoted $(P_n;\psi_n,\Ab_n)$,
and a point $P\in\overline{\Omega}$
such that $P_n\to P$ as $n\to+\infty$.

We define the following re-scaled functions,
\begin{eqnarray*}
&&\ab_n(y)=
\frac{\Ab_n(P_n+y/\sqrt{\kappa_nH_n})-\Ab_n(P_n)}{1/\sqrt{\kappa_nH_n}}\\
&&\varphi_n(y)=e^{-i\sqrt{\kappa_nH_n}\Ab_n(P_n)\cdot
  y}\,\psi_n(P_n+y/\sqrt{\kappa_nH_n})\,.
\end{eqnarray*}
Since $g_1(\kappa_n)\to+\infty$ as $n\to+\infty$ by assumption, it
follows that for any $R>0$, we may select $n_0$ sufficiently large
such that $\ab_n$ and $\varphi_n$ are defined in $\{y\in\R^3~:~|y|\leq
R\}$ for all $n\geq n_0$.

Notice that $\ab_n$ is constructed so that $\Div\ab_n=0$. We then
infer from the equation of $\psi_n$ the following equation,
\begin{equation}\label{eq-3D-eq-n}
-\Delta\varphi_n-2i\ab_n\cdot\nabla\varphi_n+|\ab_n|^2\varphi_n=\frac{\kappa_n}{H_n}(1-|\varphi_n|^2)\varphi_n\,.\end{equation}

Consider $R>1$.
Using the definition of $\ab_n$ together with \eqref{eq-3D-An} and
\eqref{eq-3D-Ab} we deduce that,
$$\ab_n\to\widetilde\Fb(y)\quad{\rm in}\quad C^{0,\alpha}(B(0,R))\,,$$
where $\widetilde\Fb(y)=D\widetilde\Ab(y)$ satisfies
$\curl\widetilde\Fb=1$.

Since $|\varphi_n|\leq 1$ and $|\ab_n|\leq C_R$ in $B(0,2R)$, we get by
elliptic regularity that the sequence $(\varphi_n)$ is bounded in
$W^{2,6}(B(0,R))$. The compactness of the embedding
$W^{2,6}(B(0,R))\to C^{1,\alpha}(B(0,R))$, $\alpha<\frac12$,
 and a standard diagonal sequence argument give the existence of a function
 $\varphi\in C^{1,\alpha}_{\rm loc}(\R^3)$ such that
$$\varphi_n\to\varphi    \quad{\rm in}\quad C^{1,\alpha}(K)\,,$$
for each compact set $K\subset\R^3$. Furthermore, the function
$\varphi$ satisfies,
$$N^{-1}\leq\|\varphi\|_{L^\infty(\R^3)}\leq 1\,,$$
and
$$-(\nabla-i\widetilde\Fb)^2\varphi=(1-|\varphi|^2)\varphi\,.$$
Since $\curl\widetilde \Fb=1$, we may find a function $\phi$ such that
$\widetilde\Fb=\Fb+\nabla\phi$, where $\Fb=(-x_2/2,x_1/2,0)$.

Setting $u=e^{-i\phi}\varphi$, we get that $u\in L^\infty(\R^3)$ and satisfies the equation,
\begin{equation}\label{eq-limit}
-(\nabla-i\Fb)^2u=(1-|u|^2)u\,.\end{equation} At the same time $u$ satisfies,
\begin{equation}\label{eq-u}
\|u\|_{L^\infty(\R^3)}\geq N^{-1}\,.\end{equation}
But Proposition~12.5.1 in \cite{FH-b} tells us that the only bounded solution of \eqref{eq-limit} is $u=0$, thereby contradicting
\eqref{eq-u}. Therefore, Theorem~\ref{thm-Linfty} is true.
\end{proof}

\section{Energy estimates in small cubes}\label{sec-locest}

In this section,
the notation  $Q_\ell$ stands for  a cube in $\R^3$ of side length $\ell>0$ and whose axis is {\it parallel} to $\beta=(0,0,1)$,
i.e. $Q_\ell$ is of the form,
$$Q_\ell=(-\ell/2+a_1,a_1+\ell/2)\times (-\ell/2+a_2,a_2+\ell/2)\times(-\ell/2+a_3,a_3+\ell/2)\,,$$
where $a=(a_1,a_2,a_3)\in\R^3$.

If $(\psi,\Ab)\in H^1(\Omega;\C)\times \dot H^1_{\Div,\Fb}(\R^3)$, we denote by
$e(\psi,\Ab)=|(\nabla -\kappa H\Ab)\psi|^2-\kappa^2|\psi|^2+\frac{\kappa^2}2|\psi|^4$.
Furthermore, we define the Ginzburg-Landau energy of $(\psi,\Ab)$ in a domain $\mathcal D\subset\Omega$ as follows,
\begin{equation}\label{eq-GLen-D}
\mathcal E(\psi,\Ab;\mathcal D)=\int_{\mathcal D} e(\psi,\Ab)\,dx+(\kappa H)^2\int_{\R^3}|\curl(\Ab-\Fb)|^2\,dx\,.
\end{equation}
We also introduce the functional,
\begin{equation}\label{eq-GLe0}
\mathcal E_0(u,\Ab;\mathcal D)=\int_{\mathcal D}\left(|(\nabla-i\Ab)u|^2-\kappa^2|u|^2+\frac{\kappa^2}2|u|^4\right)\,dx\,.
\end{equation}
If $\mathcal D=\Omega$, we sometimes omit the dependence on the domain and write $\mathcal E_0(\psi,\Ab)$ for
$\mathcal E_0(\psi,\Ab;\Omega)$.

We start with a lemma that will
be useful in the proof of Proposition~\ref{prop-lb} below and also in Section~\ref{sec-hc2} of the paper.

\begin{lem}\label{lem-lb}
Let $0<\Lambda_{\rm min}< \Lambda_{\rm max}$. There exist positive constants $C$ and $\kappa_0$ such that if
\begin{align*}
\kappa_0 \leq \kappa,\qquad \Lambda_{\rm min}\leq \frac{H}{\kappa}\leq  \Lambda_{\rm max}\,,
\end{align*}
and
if $(\psi,\Ab)\in H^1(\Omega;\C)\times \dot H^1_{\Div,\Fb}(\R^3)$ is a critical point of \eqref{eq-3D-GLf},
 $f$ is a bounded function satisfying $\|f\|_{L^\infty(\Omega)}\leq 1$, ${\rm supp}\,f\subset \overline{Q_\ell}$, $f\psi\in H^1(Q_\ell)$
  and  $Q_\ell\subset\Omega$ is a cube of side length $\ell\in(0,1)$, then the  estimate,
$$\mathcal E_0(f\psi,\Ab; Q_{\ell})\geq (1-\delta)\mathcal E_0(e^{-i\phi_0}f\psi,\Fb;Q_{\ell})-C(\delta\kappa^2+\delta^{-1}\ell^2\lambda)
\int_{Q_\ell}|f\psi|^2\,dx\,,$$
holds true for all $\delta\in(0,1)$, $\kappa\geq\kappa_0$ and some
real-valued function $\phi_0\in H^1(\Omega)$. Here
$$\lambda=(\kappa H)^2\left(\|\curl(\Ab-\Fb)\|_{L^2(\R^3)}^2+\frac1{\kappa H}\|\psi\|^2_{L^6(\Omega)}\right)\,.$$
\end{lem}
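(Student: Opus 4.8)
The plan is to trade the true potential $\Ab$ for the constant-field potential $\Fb$ by a local gauge transformation, charging the mismatch to the quantity $\lambda$ via the a priori bound \eqref{eq-est2}. Throughout I read $\mathcal E_0(\cdot,\Ab;Q_\ell)$ as the functional \eqref{eq-GLe0} with covariant derivative $(\nabla-i\kappa H\Ab)$, so that it coincides with $\int_{Q_\ell}e(\cdot,\Ab)\,dx$, and likewise for $\Fb$. Set $a=\Ab-\Fb$. Since $\|\psi\|_{L^\infty(\Omega)}\le1$ by \eqref{eq-psi<1}, Theorem~\ref{thm-3D-ad-est} (estimate \eqref{eq-est2}) gives $\|a\|_{C^{1,1/2}(\overline\Omega)}\le C(\|\curl a\|_{L^2(\R^3)}+(\kappa H)^{-1/2}\|\psi\|_{L^6(\Omega)})$; multiplying by $\kappa H$ and recalling the definition of $\lambda$ yields the basic bound
\[
\kappa H\,\|a\|_{C^{1,1/2}(\overline\Omega)}\le C\sqrt{\lambda},
\]
which is what converts all geometric errors below into $\lambda$.

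Next I would fix the gauge. Let $x_0$ be the centre of $Q_\ell$ and set $\phi_0(x)=\kappa H\,a(x_0)\cdot(x-x_0)$, a real-valued linear (hence $H^1(\Omega)$) function whose constant gradient $\nabla\phi_0=\kappa H\,a(x_0)$ is the leading part of $\kappa H\,a$ on the cube. The residual field $V:=\kappa H\,a-\nabla\phi_0$ then satisfies, for $x\in Q_\ell$, $|V(x)|=\kappa H\,|a(x)-a(x_0)|\le \kappa H\,\|\nabla a\|_{L^\infty}\,\mathrm{diam}(Q_\ell)$, so that $\|V\|_{L^\infty(Q_\ell)}\le C\ell\sqrt{\lambda}$ by the basic bound. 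Because $\curl a\ne0$ the field $a$ cannot be gauged away exactly; the constant approximation already achieves the advertised $O(\ell\sqrt\lambda)$ order, so no finer choice is needed.

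The energy comparison is then algebraic. The quadratic and quartic terms of $\mathcal E_0$ are gauge invariant since $|e^{-i\phi_0}f\psi|=|f\psi|$, so only the magnetic kinetic term moves. From the identity $(\nabla-i\kappa H\Fb)(e^{-i\phi_0}f\psi)=e^{-i\phi_0}(\nabla-i\kappa H\Fb-i\nabla\phi_0)(f\psi)$ and $\Ab=\Fb+a$ one checks that, writing $Y=(\nabla-i\kappa H\Fb-i\nabla\phi_0)(f\psi)$, the true gradient equals $(\nabla-i\kappa H\Ab)(f\psi)=Y-iV(f\psi)$. The elementary inequality $|Y-w|^2\ge(1-\delta)|Y|^2-\delta^{-1}|w|^2$ for $\delta\in(0,1)$, applied with $w=iV(f\psi)$ and integrated over $Q_\ell$ together with $\|V\|_{L^\infty(Q_\ell)}^2\le C\ell^2\lambda$, gives
\[
\int_{Q_\ell}|(\nabla-i\kappa H\Ab)(f\psi)|^2\,dx\ge(1-\delta)\int_{Q_\ell}\bigl|(\nabla-i\kappa H\Fb)(e^{-i\phi_0}f\psi)\bigr|^2\,dx-C\delta^{-1}\ell^2\lambda\int_{Q_\ell}|f\psi|^2\,dx.
\]
Adding back the common (gauge-invariant) terms, and noting that multiplying $-\kappa^2|f\psi|^2+\tfrac{\kappa^2}2|f\psi|^4$ by $(1-\delta)$ costs only $-\delta\kappa^2\int_{Q_\ell}|f\psi|^2$ (the leftover $+\delta\tfrac{\kappa^2}2\int_{Q_\ell}|f\psi|^4\ge0$ being discarded), one arrives at the stated bound with this $\phi_0$.

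The main obstacle is conceptual rather than computational: one must ensure the error is measured by exactly $\lambda$, which is forced by applying \eqref{eq-est2} at the level of $\kappa H\|a\|_{C^1}$ rather than $\|a\|_{C^1}$, and one must keep the factor $(1-\delta)$ attached to the \emph{whole} limiting functional, including its negative term $-\kappa^2|f\psi|^2$, since this is precisely what generates the $\delta\kappa^2\int_{Q_\ell}|f\psi|^2$ contribution in the statement. Using a purely linear gauge, instead of one solving an auxiliary boundary value problem, is what keeps the localization error at the optimal order $\ell\sqrt\lambda$.
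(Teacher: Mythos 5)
Your proof is correct and follows essentially the same route as the paper: gauge away the constant part of $a=\Ab-\Fb$ at the centre of the cube, control the residual field via the a priori bound \eqref{eq-est2} so that $|\kappa H a-\nabla\phi_0|\leq C\ell\sqrt{\lambda}$ on $Q_\ell$, split the kinetic term with a Cauchy--Schwarz parameter $\delta$, and absorb the factor $(1-\delta)$ into the potential terms at the cost $\delta\kappa^2\int_{Q_\ell}|f\psi|^2\,dx$. The only (cosmetic) difference is that your gauge function carries the factor $\kappa H$ explicitly, $\phi_0=\kappa H\,a(x_0)\cdot(x-x_0)$, which is the version consistent with the covariant derivative $(\nabla-i\kappa H\Ab)$; the paper writes $\phi_0(x)=\big(\Ab(x_0)-\Fb(x_0)\big)\cdot x$ but then uses it exactly as you do, so your formulation in fact tidies up that harmless inconsistency.
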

\begin{proof}
Let $x_0$ be the center of the cube $Q_\ell$. Without loss of generality, we may assume that $x_0=0$. In this way, we reduce to the case where
$$Q_{\ell_\kappa}=(-\ell_\kappa/2,\ell_\kappa/2)\times (-\ell_\kappa/2,\ell_\kappa/2)\times (-\ell_\kappa/2,\ell_\kappa/2)\subset\Omega\,.$$
Let $\phi_0(x)=\big(\Ab(x_0)-\Fb(x_0)\big)\cdot x$, where $\Fb$ is the magnetic potential introduced in \eqref{eq-hc2-Fb}.
Invoking the estimate in \eqref{eq-est2}, we get,
$$|\Ab(x)-\nabla\phi_0-\Fb(x)|\leq C\frac{\sqrt{\lambda}}{\kappa H}\,\ell\,\qquad \forall~x\in Q_{\ell}.$$
Let $u=e^{-i\phi_0}f\psi$ and $\widetilde\Ab=\Ab-\nabla\phi_0$. Then
$\mathcal E_0(f\psi,\Ab;Q_{\ell})=\mathcal E_0(u,\widetilde \Ab;Q_\ell)$ and
\begin{equation}\label{eq-3D-est-g}
|\widetilde \Ab(x)-\Fb(x)|\leq
\frac{C\sqrt{\lambda}\,\ell}{\kappa H}\,,\quad\forall~x\in Q_{\ell}\,.
\end{equation}
We estimate the energy $\mathcal E_0(u,\widetilde \Ab;Q_{\ell})$ from below.
We start by estimating the kinetic energy from below as follows,
$$|(\nabla-i\kappa H\widetilde \Ab)u|^2\geq (1-\delta)|(\nabla-i\kappa H\Fb)u|^2+(1-\delta^{-1})(\kappa H)^2|(\widetilde\Ab-\Fb)u|^2\,,$$
for any $\delta\in(0,1)$.

Using the estimate in \eqref{eq-3D-est-g} together with the assumptions $H\approx \kappa$, $\|f\|_\infty\leq 1$
we deduce the  lower bound,
\begin{equation}
\mathcal E_0(f\psi,\Ab;Q_{\ell})\geq (1-\delta)\mathcal E_0(u,\Fb;Q_{\ell})-C(\delta\kappa^2+\delta^{-1}\ell^2\lambda)\int_{Q_\ell}|f\psi|^2\,dx\,.
\end{equation}
\end{proof}

\begin{prop}\label{prop-lb}
Let $0<\Lambda_{\rm min}< \Lambda_{\rm max}$. There exist positive constants $C, R_0$ and $\kappa_0$ such that the following is true.

Let $\ell>0$ satisfy $R_0\, \kappa^{-1} \leq \ell  \leq 1/2$ and let
\begin{align*}
\kappa_0 \leq \kappa,\qquad \Lambda_{\rm min}\leq \frac{H}{\kappa}\leq  \Lambda_{\rm max}\,.
\end{align*}
Then, if  $(\psi,\Ab)\in H^1(\Omega;\C)\times \dot H^1_{\Div,\Fb}(\R^3)$ is a critical point of \eqref{eq-3D-GLf}  and $Q_{\ell}\subset\Omega$ is a cube of side length $\ell$, then,
$$\frac1{|Q_{\ell}|}\mathcal E_0(\psi,\Ab;Q_{\ell})\geq g(H/\kappa)\kappa^2-C\left(\ell+\frac1{\ell \kappa}\right)\kappa^2\,,$$
for all $\kappa\geq 1$. Here $g(\cdot)$ is the function introduced in \eqref{eq-g(b)}.
\end{prop}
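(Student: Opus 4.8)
The plan is to compare the local energy of $\psi$ with the energy of a truncated function to which the Dirichlet ground state $M_0(b,R)$ applies, and then to invoke the thermodynamic limit results of Section~\ref{sec-LE}. Write $b=H/\kappa$ and let $\chi$ be a cut-off equal to $1$ on the concentric cube $Q_{\ell-2/\kappa}$, supported in $Q_\ell$, with $|\nabla\chi|\le C\kappa$. The transition width $\sim\kappa^{-1}$ is dictated by the magnetic length $(\kappa H)^{-1/2}\approx\kappa^{-1}$; it is here that the hypothesis $\ell\ge R_0\kappa^{-1}$ guarantees $Q_{\ell-2/\kappa}\ne\emptyset$ and that the transition shell $S=Q_\ell\setminus Q_{\ell-2/\kappa}$ has volume $|S|\le C\ell^2/\kappa$.

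First I would pass from $\psi$ to $\chi\psi$. Expanding $|(\nabla-i\kappa H\Ab)(\chi\psi)|^2$ and using $\nabla|\psi|^2=2\RE(\bar\psi(\nabla-i\kappa H\Ab)\psi)$, the difference $\mathcal E_0(\psi,\Ab;Q_\ell)-\mathcal E_0(\chi\psi,\Ab;Q_\ell)$ is supported in $S$. Its integrand is bounded below by discarding the two non-negative terms $(1-\chi^2)|(\nabla-i\kappa H\Ab)\psi|^2$ and $\frac{\kappa^2}2(1-\chi^4)|\psi|^4$, and estimating the rest: with $\|\psi\|_\infty\le1$ (see \eqref{eq-psi<1}), the pointwise bound $|(\nabla-i\kappa H\Ab)\psi|\le C\sqrt{\kappa H}\le C\kappa$ from \eqref{eq-est1} (which also forces $|\nabla|\psi|^2|\le C\kappa$), and $|\nabla\chi|\le C\kappa$, each surviving term is $\ge -C\kappa^2$ on $S$. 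Integrating gives
$$\mathcal E_0(\psi,\Ab;Q_\ell)\ge\mathcal E_0(\chi\psi,\Ab;Q_\ell)-C\kappa^2|S|\ge\mathcal E_0(\chi\psi,\Ab;Q_\ell)-C\frac{1}{\ell\kappa}\kappa^2|Q_\ell|.$$

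Next I would apply Lemma~\ref{lem-lb} with $f=\chi$ to replace $\Ab$ by the constant-field potential $\Fb$, at the cost $C(\delta\kappa^2+\delta^{-1}\ell^2\lambda)\int_{Q_\ell}|\chi\psi|^2$. Here $\int_{Q_\ell}|\chi\psi|^2\le|Q_\ell|$, and the a priori estimates of Theorem~\ref{thm-3D-apriori} together with $\|\psi\|_\infty\le1$ and the boundedness of $\Omega$ give $\lambda\le C\kappa^2$; choosing $\delta=\ell$ converts this error into $C\ell\kappa^2|Q_\ell|$. Since $e^{-i\phi_0}\chi\psi\in H^1_0(Q_\ell)$, the rescaling $y=\sqrt{\kappa H}\,x$ identifies $\mathcal E_0(e^{-i\phi_0}\chi\psi,\Fb;Q_\ell)=(\kappa H)^{-3/2}\kappa^2F^{\rm 3D}_{b,Q_R}(\tilde v)$ with $R=\sqrt{\kappa H}\,\ell$ and $\tilde v\in H^1_0(Q_R)$, hence $\ge(\kappa H)^{-3/2}\kappa^2M_0(b,R)$. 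Combining $M_0(b,R)\ge R\,m_0(b,R)$ from Theorem~\ref{thm-3D-thmd}(2) with $m_0(b,R)\ge g(b)R^2$ from Theorem~\ref{thm-thmd-AS}, and recalling $R^3=(\kappa H)^{3/2}|Q_\ell|$, I obtain $\mathcal E_0(e^{-i\phi_0}\chi\psi,\Fb;Q_\ell)\ge g(b)\kappa^2|Q_\ell|$. Using $g(b)\le0$ to discard the factor $(1-\delta)$ and assembling the two error terms yields the claimed estimate.

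The hard part will be the bookkeeping of the two competing error scales: the gauge-replacement error from Lemma~\ref{lem-lb} is $\sim\ell\kappa^2|Q_\ell|$ while the truncation error is $\sim(\ell\kappa)^{-1}\kappa^2|Q_\ell|$, and both are forced to fit the target $C(\ell+\frac1{\ell\kappa})\kappa^2$ precisely because the cut-off width is placed at the magnetic scale $\kappa^{-1}$ and because the sharp pointwise gradient bound \eqref{eq-est1} is at hand. The lower threshold $\ell\ge R_0\kappa^{-1}$ is exactly what makes $R=\sqrt{\kappa H}\,\ell$ large enough for the remainder estimate of Theorem~\ref{thm-thmd-AS} to be applicable.
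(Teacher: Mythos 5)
Your proof is correct and follows essentially the same route as the paper: the a priori bound $\lambda\le C\kappa^2$, Lemma~\ref{lem-lb} with the choice $\delta=\ell$, rescaling to the model functional $F^{\rm 3D}_{b,Q_R}$ with $R=\ell\sqrt{\kappa H}$, and the chain $M_0(b,R)\ge R\,m_0(b,R)\ge g(b)R^3$ from Theorems~\ref{thm-thmd-AS} and \ref{thm-3D-thmd}. The only (cosmetic) difference is the order of operations: you truncate $\psi$ at the magnetic scale \emph{before} invoking Lemma~\ref{lem-lb}, whereas the paper applies the lemma with $f=\mathbf 1_{Q_\ell}$ and performs the cut-off \emph{after} rescaling to $Q_R$; both versions pay the same $O\big((\ell\kappa)^{-1}\kappa^2\big)$ localization error via the gradient bound \eqref{eq-est1}.
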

\begin{proof}
We may reduce to the case where
$$Q_{\ell}=(-\ell/2,\ell/2)\times (-\ell/2,\ell/2)\times (-\ell/2,\ell/2)\subset\Omega\,.$$
Recall the parameter $\lambda$ introduced in Lemma~\ref{lem-lb}.
Using the bound $\|\psi\|_\infty\leq 1$ and the estimate in
\eqref{eq-3D-apriori2}, we get $\lambda\leq C\kappa^2$. We apply
Lemma~\ref{lem-lb} with $f$  the characteristic function of
$Q_\ell$. After possibly performing a gauge transformation, we may
assume the following lower bound,
\begin{equation}\label{eq-3D-GLlb1}
\mathcal E_0(\psi,\Ab;Q_{\ell})\geq (1-\delta)\mathcal E_0(\psi,\Fb;Q_{\ell})-C(\delta+\delta^{-1}\ell^2)\kappa^2|Q_{\ell}|\,\|\psi\|_{L^\infty(Q_{\ell})}^2\,,
\end{equation}
where $C>0$ is a constant only depending on $\Omega$.

We estimate the energy $\mathcal E_0(\psi,\Fb;Q_{\ell})$ from below.  Let $b=H/\kappa$ and $R=\ell\sqrt{\kappa H}$\,.
Define the rescaled function,
$$\widetilde \psi(x)=\psi(x/\sqrt{\kappa H})\,,\quad\forall~x\in Q_R\,.$$
Recall the functional  $F^{\rm 3D}_{Q_R}$ introduced in \eqref{eq-LF-3D} above. It is easy to check that,
\begin{align}
\mathcal E_0(\psi,\Fb;Q_{\ell})&=\frac1{\sqrt{\kappa H}}\int_{Q_R}\left(|(\nabla-i\Fb)\widetilde\psi|^2-\frac{\kappa}{H}|\widetilde\psi|^2+\frac{\kappa}{2H}|\widetilde\psi|^4\right)\,dx\nonumber\\
&=\frac1{b\sqrt{\kappa H}}F^{\rm 3D}_{Q_R}(\widetilde\psi)\,.\label{eq-3D-GLlb2}
\end{align}
We still need to estimate from below the reduced energy $F^{\rm 3D}_{Q_R}(\widetilde\psi)$. The problem is that $\widetilde\psi$ is not in $H^1_0(Q_R)$. So, we introduce a cut-off function $\chi_R\in C_c^\infty(\R^3)$ such that
$$0\leq\chi_R\leq 1\quad{\rm in~}\R^3\,,\quad {\rm supp}\,\chi_R\subset Q_{R}\,,\quad  \chi_R=1\quad{\rm in~}
Q_{R-1}\,.$$
In particular, the function $\chi_R$ can be selected such that $|\nabla\chi_R|\leq C$ for some universal constant $C$.
Let $u=\chi_R\widetilde\psi$. It is easy to check that,
\begin{align*}
F^{\rm 3D}_{Q_R}(\widetilde\psi)&\geq \int_{Q_R}\left(b|\chi_R(\nabla-i\Fb)\widetilde\psi|^2- |\chi_R\widetilde\psi|^2+|\chi_R\widetilde\psi|^4\right)\,dx+\int_{Q_R}(\chi_R^2-1)|\widetilde\psi|^2\,dx\\
&\geq  F^{\rm 3D}_{Q_R}(u)+\int_{Q_R}(|\nabla\chi_R|^2+\chi_R^2-1)|\widetilde\psi|^2\,dx-2
|\langle (\nabla-i\Fb)(\chi_R\widetilde\psi),(\nabla\chi_R)\widetilde\psi\rangle|\,.
\end{align*}
The estimate in \eqref{eq-est1} tells us that  $|(\nabla-i\Fb)\widetilde\psi|$ is bounded independently of $\kappa$ and $H$.
Also, by using the bounds $|u|\leq1$, $|\nabla\chi_R|\leq C$ and the assumption on the support of $\chi_R$,
it is easy to check that,
\begin{align*}
F^{\rm 3D}_{Q_R}(\widetilde\psi)\geq  F^{\rm 3D}_{Q_R}(u)-CR^2\,\|\psi\|_{L^\infty(Q_{\ell})}^2\,.
\end{align*}
After recalling the definition of $M_0(b,R)$ introduced in \eqref{eq-m0-3D}, we get,
\begin{equation}\label{eq-3D-GLlb3}
F^{\rm 3D}_{Q_R}(\widetilde\psi)\geq M_0(b,R)-CR^2\,\|\psi\|_{L^\infty(Q_{\ell})}^2\,.
\end{equation}
We get by collecting the estimates in \eqref{eq-3D-GLlb1}-\eqref{eq-3D-GLlb3} that,
\begin{equation}\label{eq-3D-GLlb4}
\frac1{|Q_{\ell}|}\mathcal E_0(\psi,\Ab;Q_{\ell})\geq
(1-\delta)\frac1{b\ell^3\sqrt{\kappa H}} M_0(b,R)-r(\kappa)\,,\end{equation}
where
\begin{equation}\label{eq-3D-r1}
r(\kappa)=C\left((\delta+\delta^{-1}\ell^2)\kappa^2
+\frac{R^2}{b\ell^3\sqrt{\kappa H}}\right)\|\psi\|_{L^\infty(Q_{\ell})}^2\,.
\end{equation}
Theorems~\ref{thm-thmd-AS} and \ref{thm-3D-thmd} together tell us that $M_0(b,R)\geq R^3g(b)$ for all $b\in[0,1]$ and
$R$ sufficiently large. Here $g(b)$ is  introduced in \eqref{eq-g(b)}.  Therefore,  we get from \eqref{eq-3D-GLlb4} the estimate,
\begin{equation}\label{eq-3D-GLlb-b<1}
\frac1{|Q_{\ell}|}\mathcal E_0(\psi,\Ab;Q_{\ell})\geq  (1-\delta)\frac{R^3}{b\ell^3\sqrt{\kappa H}}
g(b)-
r(\kappa)\,.
\end{equation}
We choose $\delta=\ell$ so that
$$r(\kappa)=\mathcal O\big((\ell+(\ell\kappa)^{-1})\kappa^2\big).$$
After recalling that $R=\ell\sqrt{\kappa H}$ and  $b=H/\kappa$,
we finish  the proof of the proposition.
\end{proof}

\begin{prop}\label{prop-ub}
Let $0<\Lambda_{\rm min}< \Lambda_{\rm max}$. There exist positive constants $C, R_0$ and $\kappa_0$ such that the following is true.
Let $\ell>0$ satisfy $R_0 \kappa^{-1} \leq \ell  \leq 1/2$ and let
\begin{align*}
\kappa_0 \leq \kappa,\qquad \Lambda_{\rm min}\leq \frac{H}{\kappa}\leq  \Lambda_{\rm max}\,.
\end{align*}
Then,  if  $(\psi,\Ab)\in H^1(\Omega;\C)\times \dot H^1_{\Div,\Fb}(\R^3)$ is a critical point of \eqref{eq-3D-GLf},  and $Q_{\ell}\subset\Omega$ is a cube of side length $\ell$, then,
$$\frac1{|Q_{\ell}|}\mathcal E_0(\psi,\Ab;Q_{\ell})\leq g(H/\kappa)\kappa^2+C\left(\ell+\frac1{\ell\kappa}\right)\kappa^2\,.$$
Here $g(\cdot)$ is the function  introduced in \eqref{eq-g(b)}, and $\mathcal E_0$ is the functional in \eqref{eq-GLe0}.
\end{prop}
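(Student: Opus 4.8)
The plan is to obtain this bound as the competitor counterpart of the lower bound in Proposition~\ref{prop-lb}: whereas that lower bound is read off from the equation after localization, the upper bound will be produced by feeding a good trial state into the functional. Concretely, I would compare $(\psi,\Ab)$ on $Q_\ell$ with a configuration that coincides with $\psi$ outside $Q_\ell$ and, inside, is a rescaled minimizer of the three–dimensional limiting energy $M_0(b,R)$, where $b=H/\kappa$ and $R=\ell\sqrt{\kappa H}$. The step that converts the trial–state estimate into the stated inequality is the comparison $\mathcal E(\psi,\Ab)\le\mathcal E(\Psi,\Ab)$ for every admissible $\Psi$, i.e.\ the energy–minimizing character of the configuration; this is exactly the setting in which the estimate is used, cf.\ the two–sided statement of Theorem~\ref{thm-3D-op}(2). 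Since $R\ge R_0\sqrt{\Lambda_{\rm min}}$ is large, the thermodynamic estimates of Theorems~\ref{thm-thmd-AS} and \ref{thm-3D-thmd} are available on $Q_R$.

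First I would record the trial–state energy. Let $u_b\in H^1_0(Q_R)$ be a minimizer of $F^{\rm 3D}_{b,Q_R}$, so $|u_b|\le1$ and, combining Theorem~\ref{thm-3D-thmd}(2) with the remainder estimate \eqref{eq-remainder}, one gets $M_0(b,R)=F^{\rm 3D}_{b,Q_R}(u_b)\le R^3 g(b)+CR^2$. Undoing the rescaling of Proposition~\ref{prop-lb}, the function $w_0(x)=u_b(\sqrt{\kappa H}\,x)$ on $Q_\ell$ satisfies
\[
\mathcal E_0(w_0,\Fb;Q_\ell)=\frac{1}{b\sqrt{\kappa H}}\,F^{\rm 3D}_{b,Q_R}(u_b)\le g(b)\,\kappa^2|Q_\ell|+\frac{C\ell^2\kappa^2}{\sqrt{\kappa H}},
\]
and the last term is $O\big((\ell\kappa)^{-1}\big)\kappa^2|Q_\ell|$ because $\sqrt{\kappa H}\approx\kappa$.

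Next I would glue $w_0$ to $\psi$. Choosing the gauge $\phi_0$ of Lemma~\ref{lem-lb} and a cut–off $\eta$ equal to $1$ on the concentric subcube of side $\ell-2(\kappa H)^{-1/2}$, vanishing near $\partial Q_\ell$, with $|\nabla\eta|\le C\sqrt{\kappa H}$, I set $\Psi=\eta\,e^{i\phi_0}w_0+(1-\eta)\psi$ in $Q_\ell$ and $\Psi=\psi$ in $\Omega\setminus Q_\ell$. Since $\Psi=\psi$ in a neighbourhood of $\partial Q_\ell$, one has $\Psi\in H^1(\Omega)$ with $\Psi=\psi$ off $Q_\ell$, so the minimizing property collapses to $\mathcal E_0(\psi,\Ab;Q_\ell)\le\mathcal E_0(\Psi,\Ab;Q_\ell)$, and it remains to bound the right–hand side. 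Running the computation of Lemma~\ref{lem-lb} with the opposite inequality (completing the square so as to produce a factor $1+\delta$ rather than $1-\delta$) and using \eqref{eq-est2} together with $\lambda\le C\kappa^2$ replaces $\Ab$ by $\Fb$ at the cost $C(\delta+\delta^{-1}\ell^2)\kappa^2|Q_\ell|$, which is $O(\ell)\kappa^2|Q_\ell|$ for $\delta=\ell$. On the bulk $\{\eta=1\}$ the integrand is exactly that of $\mathcal E_0(w_0,\Fb;\cdot)$, already estimated; on the transition layer $\{0<\eta<1\}$, whose volume is $O(\ell^2(\kappa H)^{-1/2})$, the bounds $|w_0|,|\psi|\le1$, $|(\nabla-i\kappa H\Fb)(e^{-i\phi_0}\psi)|\le C\sqrt{\kappa H}$ (from \eqref{eq-est1}) and $|\nabla\eta|\le C\sqrt{\kappa H}$ give a contribution $O(\kappa H\cdot\ell^2(\kappa H)^{-1/2})=O(\kappa\ell^2)$, again $O\big((\ell\kappa)^{-1}\big)\kappa^2|Q_\ell|$. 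Collecting the three error contributions and dividing by $|Q_\ell|$ would yield the claim (the case $b\ge1$ is covered by the same construction with $w_0=0$, for which $g(b)=0$).

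The main obstacle, I expect, is the transition layer: one must simultaneously keep $\Psi\in H^1(\Omega)$ with $\Psi=\psi$ across $\partial Q_\ell$ (so that the competitor is admissible and the energies outside $Q_\ell$ cancel), control the cross terms $\langle(\nabla-i\kappa H\Fb)(\eta w_0),(\nabla\eta)\psi\rangle$ created by the interpolation, and do so in a layer thin enough ($\sim(\kappa H)^{-1/2}$) that its total energy remains at the level $(\ell\kappa)^{-1}\kappa^2|Q_\ell|$. A secondary point is the bookkeeping in the $\Ab\to\Fb$ replacement, where one must verify that the factor $1+\delta$ multiplying the negative bulk energy $\mathcal E_0(w_0,\Fb;Q_\ell)$ worsens the bound only by $O(\delta)\kappa^2|Q_\ell|$, absorbed by $\delta=\ell$.
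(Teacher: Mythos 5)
Your strategy is essentially the paper's: compare the minimizer with a competitor equal to $\psi$ outside $Q_\ell$ and to a rescaled Dirichlet minimizer of $M_0(b,R)$ inside, exploit minimality (the paper's proof also silently upgrades ``critical point'' to ``minimizer'', exactly as you note), replace $\Ab$ by $\Fb$ through a $(1+\delta)$ Cauchy--Schwarz with $\delta=\ell$ using the gauge of Lemma~\ref{lem-lb} and $\lambda\leq C\kappa^2$, and conclude from $M_0(b,R)\leq R^3g(b)+CR^2$ (Theorems~\ref{thm-thmd-AS} and~\ref{thm-3D-thmd}). Your error bookkeeping --- $O(\ell)\kappa^2$ from the field replacement, $O\big((\ell\kappa)^{-1}\big)\kappa^2$ from the $CR^2$ remainder and from a layer of thickness $(\kappa H)^{-1/2}$ --- reproduces the paper's \eqref{eq-ub-final}.

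There is, however, one genuine gap, precisely at the point you flag as ``the main obstacle'' without resolving it. Your competitor $\Psi=\eta\,e^{i\phi_0}w_0+(1-\eta)\psi$ makes the two pieces \emph{overlap} in the transition layer, so $\mathcal E_0(\Psi,\Ab;Q_\ell)$ contains $\eta^2|(\nabla-i\kappa H\Fb)w_0|^2$ and cross terms involving $(\nabla-i\kappa H\Fb)w_0$ there; the bounds you list ($|w_0|,|\psi|\leq1$, $|\nabla\eta|\leq C\sqrt{\kappa H}$, and \eqref{eq-est1}) do not control these, because \eqref{eq-est1} is an a priori estimate for the Ginzburg--Landau \emph{solution} $\psi$ and says nothing about the Dirichlet minimizer $u_b$. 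To make your layer estimate $O(\kappa\ell^2)$ you would need $|(\nabla-i\Fb)u_b|\leq C$ uniformly in $R$ up to $\partial Q_R$, which is delicate at the edges and corners of the cube and is nowhere established in the paper. The paper's construction \eqref{eq-test-conf} avoids the issue structurally: since $u_R\in H^1_0(Q_R)$, its rescaling extends by zero across $\partial Q_\ell$, and the test configuration is taken to be \emph{exactly} the rescaled minimizer on $Q_\ell$ and $\eta_R\psi$ (with no $w_0$ component) on a thin shell just outside $Q_\ell$; the two pieces have disjoint supports, no cross terms between them arise, all layer errors involve only $\psi$ and are bounded by $C\ell^2\kappa$ via \eqref{eq-est1}, and no gradient estimate on $u_R$ is ever needed. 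Alternatively, you could eliminate the $w_0$-gradient terms by integrating by parts against the Euler--Lagrange equation of $u_b$, in the spirit of Lemma~\ref{lem-monton'} and Proposition~\ref{prop-m0=mp}; but neither fix appears in your write-up, so the transition-layer estimate is, as written, unjustified.
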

\begin{proof}
After performing a translation, we may assume that,
$$Q_{\ell}=(-\ell/2,\ell/2)\times (-\ell/2,\ell/2)\times (-\ell/2,\ell/2)\subset\Omega\,.$$
As explained earlier  in the proof of Lemma~\ref{lem-lb}, we may suppose, after performing a gauge transformation, that the magnetic potential $\Ab$ satisfies,
\begin{equation}\label{eq-3D-est-g'}
|\Ab(x)-\Fb(x)|\leq
\frac{C\ell}{\sqrt{\kappa H}}\,,\quad\forall~x\in Q_{\ell}\,,
\end{equation}
where $\Fb$ is the magnetic potential introduced in \eqref{eq-hc2-Fb}.

Let  $b=H/\kappa$, $R=\ell\sqrt{\kappa H}$ and $u_R\in H^1_0(Q_R)$ be a minimizer of the functional $F^{\rm 3D}_{Q_R}$
introduced in \eqref{eq-LF-3D}, i.e. $F^{\rm 3D}_{Q_R}(u_R)=M_0(b,R)$ where $M_0(b,R)$ is introduced in \eqref{eq-m0-3D}.

Let $\chi_R\in C_c^\infty(\R^3)$ be a cut-off function  such that,
$$0\leq\chi_R\leq 1\quad{\rm in~}\R^3\,,\quad {\rm supp}\,\chi_R\subset Q_{R+1}\,,\quad  \chi_R=1\quad{\rm in~}
Q_{R}\,.$$
and  $|\nabla\chi_R|\leq C$ for some universal constant $C$. Let $\eta_R(x)=1-\chi_R(x\sqrt{\kappa H})$ for all $x\in\R^3$.
We introduce the function,
\begin{equation}\label{eq-test-conf}
\varphi(x)=\mathbf 1_{Q_{\ell}}(x) u_R(x\sqrt{\kappa H})+\eta_R(x) \psi(x)\,,\quad \forall~x\in\Omega\,.
\end{equation}
Notice that by construction, $\varphi=\psi$ in $\Omega\setminus Q_{\ell+\frac1{\sqrt{\kappa H}}}$. We will prove that, for all $\delta\in(0,1)$,
\begin{equation}\label{eq-GLub}
\mathcal E(\varphi,\Ab;\Omega)\leq \mathcal E(\psi,\Ab;\Omega\setminus Q_{\ell})+(1+\delta)\frac1{b\sqrt{\kappa H}}
M_0(b,R)+ r_0(\kappa)\,,
\end{equation}
where $M_0(b,R)$ is defined by \eqref{eq-m0-3D}, and for some constant $C$, $r_0(\kappa)$ is given as follows,
\begin{equation}\label{eq-ub-r0}
r_0(\kappa)=C\ell^3\left(\delta+\delta^{-1}\ell^2+\frac1{\ell\kappa}\right)\kappa^2\,.
\end{equation}

Before proving \eqref{eq-GLub}, we explain how we get the result of the proposition. Actually, by definition of the minimizer $(\psi,\Ab)$, we have,
$$\mathcal E(\psi,\Ab)\leq \mathcal E(\varphi,\Ab;\Omega)\,.$$
Since $\mathcal E(\psi,\Ab;\Omega)=\mathcal E(\psi,\Ab;\Omega\setminus Q_{\ell})+\mathcal E_0(\psi,\Ab;Q_{\ell})$,
the estimate \eqref{eq-GLub} gives us,
$$\mathcal  E_0(\psi,\Ab;Q_{\ell})\leq (1+\delta)\frac1{b\sqrt{\kappa H}}
M_0(b,R)+ r_0(\kappa)\,.$$
Dividing both sides by $|Q_{\ell}|=\ell^3$ and remembering the definition of $r_0(\kappa)$, we get,
\begin{equation}\label{eq-ub-final}\frac1{|Q_{\ell}|}\mathcal E(\psi,\Ab,Q_{\ell})\leq
\frac{(1+\delta)}{b\ell^3\sqrt{\kappa H}}M_0(b,R)
+C \left(\delta+\delta^{-1}\ell^2+\frac1{\ell\kappa}\right)\kappa^2\,.\end{equation}
Theorems~\ref{thm-thmd-AS} and~\ref{thm-3D-thmd} together tell us that $M_0(b,R)\leq R^3g(b)+CR^2$ for all $b\in[0,1]$ and $R$ sufficiently large. We substitute this into \eqref{eq-ub-final} and we select $\delta=\ell$, so that $r_0(\kappa)=\kappa^2\mathcal O\big(\ell+(\ell\kappa)^{-1}\big)$.
Recalling that $R=\ell\sqrt{\kappa H}$, $b=H/\kappa$, we get,
$$\frac1{|Q_{\ell}|}\mathcal E(\psi,\Ab,Q_{\ell})\leq
g(b)\kappa^2+\kappa^2\mathcal O\big(\ell+(\ell\kappa)^{-1}\big)\,.$$
This establishes  the result of Proposition~\ref{prop-ub}.

{\bf Proof of \eqref{eq-GLub}:}

Recall the  Ginzburg-Landau energy $\mathcal E_0$  defined in \eqref{eq-GLe0}. We write,
\begin{equation}\label{eq-E=E1+E2}
\mathcal E_0(\varphi,\Ab;\Omega)=\mathcal E_1+\mathcal E_2\,,\end{equation}
where
\begin{equation}\label{eq-E1+E2}
\mathcal E_1=\mathcal E_0(\varphi,\Ab;\Omega\setminus Q_{\ell})\,,\quad \mathcal E_2=\mathcal E_0(\varphi,\Ab;Q_{\ell})\,.
\end{equation}
We estimate each of $\mathcal E_1$ and $\mathcal E_2$ from above. Starting with $\mathcal E_1$, we write,
\begin{equation}\label{eq-E1-ub}
\mathcal E_1=\mathcal E_0(\psi,\Ab;\Omega\setminus Q_{\ell})+\mathcal R(\psi,\Ab)\,,
\end{equation}
where
\begin{multline*}
\mathcal R(\psi,\Ab)=
\int_{\Omega\setminus Q_{\ell}}\bigg{(}(\eta_R^2-1)\left(|(\nabla-i\kappa H\Ab)\psi|^2-\kappa^2|\psi|^2\right)+|\psi\nabla\eta_R|^2+\frac{\kappa^2}2(\eta_R^4-1)|\psi|^4\\
+2\Re\langle  \eta_R(\nabla-i\kappa H\Ab)\psi,\psi\nabla\eta_R\rangle\bigg{)}\,dx\,.
\end{multline*}
Using that $\eta_R=1$ in $\Omega\setminus Q_{\ell+\frac1{\sqrt{\kappa H}}}$ together with the estimates $|\nabla\eta_R|\leq C\sqrt{\kappa H}$, $|\psi|\leq 1$ and
\eqref{eq-est1}, we get,
\begin{equation}\label{eq-est-R-ub}
|\mathcal R(\psi,\Ab)|\leq C\ell^2\kappa\,.
\end{equation}
Inserting \eqref{eq-est-R-ub} in \eqref{eq-E1-ub} we get the following estimate,
\begin{equation}\label{eq-E1-ub1}
\mathcal E_1\leq\mathcal E_0(\psi,\Ab;\Omega\setminus Q_{\ell})+C\ell^2\kappa\,.
\end{equation}
We estimate the term $\mathcal E_2$ from \eqref{eq-E1+E2}. We start by observing that for any $\delta\in(0,1)$, we have the following upper bound,
\begin{multline}\label{E2-ub}
\mathcal E_2\leq \int_{Q_{\ell}}\left\{(1+\delta)|(\nabla-i\kappa H\Fb)\varphi|^2-\kappa^2|\varphi|^2+\frac{\kappa^2}2|\varphi|^4\right\}\,dx\\
+(1+\delta^{-1})\kappa^2H^2\int_{Q_{\ell}}|\Ab-\Fb|^2|\varphi|^2\,dx\,.
\end{multline}
We use the estimates \eqref{eq-3D-est-g'} and $|\varphi|\leq 1$ and we get,
\begin{equation}
\label{E2-ub1}
\mathcal E_2\leq (1+\delta)
\int_{Q_{\ell}}\left(|(\nabla-i\kappa H\Fb)\varphi|^2-\kappa^2|\varphi|^2+\frac{\kappa^2}2|\varphi|^4\right)\,dx
+C\delta^{-1}\ell^{4}\kappa^2\,.
\end{equation}
Since $\varphi(x)=u_R(x\sqrt{\kappa H})$ in $Q_{\ell}$ and $R=\ell\sqrt{\kappa H}$, we get by performing a change of variables,
\begin{align}\label{E2-ub2}
&\hskip-2cm\int_{Q_{\ell}}\left(|(\nabla-i\kappa H\Fb)\varphi|^2-\kappa^2|\varphi|^2+\frac{\kappa^2}2|\varphi|^4\right)\,dx\nonumber\\
&\hskip1cm=
\frac1{\sqrt{\kappa H}}\int_{Q_R}\left(|(\nabla-i\Fb)u_R|^2-\frac{\kappa}{H}|u_R|^2+\frac{\kappa}{2H}|u_R|^4\right)\,dx\nonumber\\
&\hskip1cm=\frac1{b\sqrt{\kappa H}} F^{\rm 3D}_{Q_R}(u_R)\,,
\end{align}
where $F^{\rm 3D}_{Q_R}$ is the functional from \eqref{eq-LF-3D} (with $b=H/\kappa$).
Inserting \eqref{E2-ub2}  into \eqref{E2-ub1}, we get,
\begin{equation}\label{E1-ub3}
\mathcal E_2\leq (1+\delta)\frac1{b\sqrt{\kappa H}} F^{\rm 3D}_{Q_R}(u_R)+C\delta\ell^3\kappa^2+C\delta^{-1}\ell^{4}\kappa^2\,.
\end{equation}
Inserting \eqref{eq-E1-ub1} and \eqref{E1-ub3} into \eqref{eq-E=E1+E2}, we deduce that,
\begin{equation}\label{eq-ub-E1+E2}
\mathcal E_0(\psi,\Ab)\leq  \mathcal E_0(\psi,\Ab;\Omega\setminus Q_{\ell})+(1+\delta)\frac1{b\sqrt{\kappa H}} F^{\rm 3D}_{Q_R}(u_R)
+C\delta\ell^3\kappa^2+C\delta^{-1}\ell^{4}\kappa^2+C\ell^2\kappa\,.
\end{equation}
Recalling the definition of $\mathcal E(\psi,\Ab;\cdot)$ in \eqref{eq-GLen-D}, we see that  \eqref{eq-ub-E1+E2} is sufficient to finish the proof of \eqref{eq-GLub}.
\end{proof}

We conclude the section by giving an upper bound on $\E0(\kappa,H)$.

\begin{prop}\label{prop-ub-c0}
Let $0<\Lambda_{\rm min}< \Lambda_{\rm max}$ and let $\delta>0$. There exist positive constants $C$ and $\kappa_0$ such that if
\begin{align*}
\kappa_0 \leq \kappa,\qquad \Lambda_{\rm min}\leq \frac{H}{\kappa}\leq  \Lambda_{\rm max},
\end{align*}
then
the ground state energy $\E0(\kappa,H)$ in \eqref{eq-3D-gs} satisfies,
$$\E0(\kappa, H)\leq g(H/\kappa)|\Omega|\kappa^2+C\kappa^{1+\delta}\,.
$$
\end{prop}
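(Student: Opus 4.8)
The plan is to produce a single explicit trial pair $(\psi,\Ab)$ and bound $\mathcal E(\psi,\Ab)$ from above. The natural choice is $\Ab=\Fb$, which is admissible (since $\Div\Fb=0$ and $\Fb-\Fb=0\in\dot H^1(\R^3)$) and makes the field energy vanish because $\curl\Fb=\beta$. Hence $\E0(\kappa,H)\le\int_\Omega e(\psi,\Fb)\,dx$ for every $\psi\in H^1(\Omega;\C)$, and the whole problem reduces to constructing a good $\psi$. I would build $\psi$ by tiling: fix a length $\ell$ with $R_0\kappa^{-1}\le\ell\le\tfrac12$, cover $\R^3$ by a lattice of cubes of side $\ell$ parallel to $\beta$, and let $\mathcal I$ index those cubes contained in $\Omega$.

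On each cube $Q_\ell^{(j)}$, $j\in\mathcal I$, I would place a rescaled minimizer of the three–dimensional limiting functional. With $b=H/\kappa$ and $R=\ell\sqrt{\kappa H}$, let $u_R\in H^1_0(Q_R)$ satisfy $F^{\rm 3D}_{Q_R}(u_R)=M_0(b,R)$, and set $\psi$ on $Q_\ell^{(j)}$ equal to a magnetic translate of $x\mapsto u_R(\sqrt{\kappa H}\,x)$; on the remaining boundary layer $\Omega\setminus\bigcup_{j\in\mathcal I}Q_\ell^{(j)}$ I would set $\psi=0$. Since $u_R\in H^1_0$, each piece vanishes on $\partial Q_\ell^{(j)}$ and extends by zero, so $\psi\in H^1(\Omega;\C)$ and the pieces have pairwise disjoint supports. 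Magnetic translation invariance of the functional together with the change of variables carried out in the proof of Proposition~\ref{prop-ub} then give, for each $j$,
$$\int_{Q_\ell^{(j)}}e(\psi,\Fb)\,dx=\frac{1}{b\sqrt{\kappa H}}\,M_0(b,R).$$

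Summing is then routine. By Theorems~\ref{thm-thmd-AS} and \ref{thm-3D-thmd} one has $M_0(b,R)\le g(b)R^3+CR^2$ for $R$ large; inserting $R=\ell\sqrt{\kappa H}$ and using $H\approx\kappa$ turns the main term into $g(b)\ell^3\kappa^2=g(b)|Q_\ell|\kappa^2$ and the correction into $O(\ell^2\kappa)$ per cube. Since $\#\mathcal I\le|\Omega|\,\ell^{-3}$ and the covered volume satisfies $\#\mathcal I\,\ell^3\ge|\Omega|-C\ell$, and since $g(b)\le0$, the cube sum is bounded by $g(b)|\Omega|\kappa^2+C\ell\kappa^2+C\kappa/\ell$; here $C\ell\kappa^2$ accounts for the loss of at most $|g(b)|\kappa^2$ on the uncovered boundary shell of volume $O(\ell)$, and $C\kappa/\ell$ collects the per–cube corrections. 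Choosing $\ell=\kappa^{-1/2}$ balances the two errors and yields $\E0(\kappa,H)\le g(H/\kappa)|\Omega|\kappa^2+C\kappa^{3/2}$, which already gives the claim with the exponent $\tfrac32=1+\tfrac12$.

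The main obstacle is the geometric mismatch between the cubic lattice and $\partial\Omega$: the single–scale construction loses $O(\ell\kappa^2)$ on the uncovered boundary shell, and balancing this against the bulk correction $O(\kappa/\ell)$ forces $\ell\sim\kappa^{-1/2}$ and hence the exponent $\tfrac32$. To reach the stated error $\kappa^{1+\delta}$ for an arbitrarily small prescribed $\delta>0$, I would refine the covering near the boundary by a finite hierarchy of smaller cubes (sizes $\ell\gg\ell_1\gg\cdots$, each still $\ge R_0\kappa^{-1}$), which shrinks the uncovered volume at the cost of slightly larger per–cube corrections confined to a thin shell; optimizing the number of scales against $\delta$ pushes the boundary loss below $\kappa^{1+\delta}$. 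The bulk estimate, the scaling identity, and the bound on $M_0(b,R)$ are all already available, so the only genuinely new bookkeeping is this multiscale treatment of the boundary.
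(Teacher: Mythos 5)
Your trial state is the same as the paper's: take $\Ab=\Fb$ (so the field term in \eqref{eq-3D-GLf} vanishes), tile $\Omega$ by cubes of side $\ell=R/\sqrt{\kappa H}$ contained in $\Omega$, place in each cube a magnetic translate of the rescaled minimizer of $M_0(b,R)$ from \eqref{eq-m0-3D}, zero elsewhere, use the scaling identity that each cube contributes $M_0(b,R)/(b\sqrt{\kappa H})$, and conclude with $M_0(b,R)\leq g(b)R^3+CR^2$ from Theorems~\ref{thm-thmd-AS} and \ref{thm-3D-thmd}. (The paper builds the very same function from one magnetic-periodic extension of a single minimizer instead of cube-by-cube magnetic translates; the difference is immaterial.) Your single-scale bookkeeping is correct, and in one respect more careful than the paper's: you retain the loss $|g(b)|\kappa^2\cdot O(\ell)$ caused by the uncovered boundary shell, and you correctly observe that balancing it against the summed per-cube corrections $C\kappa/\ell$ pins $\ell\sim\kappa^{-1/2}$ and the error at $\kappa^{3/2}$. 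This proves the statement for every $\delta\geq\tfrac12$, which is all that is needed for Theorem~\ref{thm-3D-main}.

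For $\delta<\tfrac12$ your argument is only a plan: the multiscale covering near $\partial\Omega$ is described but not executed, and this is the one genuine gap in the proposal. The plan does close, however. Taking $k+1$ scales $\ell_i=\kappa^{-(i+1)/(k+2)}$, $i=0,\dots,k$, tiling the bulk with $\ell_0$-cubes and each successive uncovered shell (of volume $O(\ell_{i-1})$ and boundary area $O(1)$) with $\ell_i$-cubes, the error terms are $\kappa/\ell_0$ from the bulk, $O(\kappa\ell_{i-1}/\ell_i)$ per stage, and $|g(b)|\kappa^2 O(\ell_k)$ for the last uncovered shell; all of these equal $O(\kappa^{1+1/(k+2)})$, every $\ell_i$ stays $\gg\kappa^{-1}$ so the bound on $M_0$ applies at each scale, and the constants depend only on $k$, i.e.\ on $\delta$, as the statement allows. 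You should also know that the paper's own proof does not contain this step: it works at a single scale, asserts $\mathcal E(v,\Fb)\leq g(b)|\Omega|\kappa^2+C\kappa^2/R$ with the boundary-shell term silently dropped, and then chooses $R$ (the printed ``$R=\kappa^{-1+\delta}$'' is evidently a typo for $R=\kappa^{1-\delta}$). Since the energy of that trial state genuinely contains a positive term of size $|g(b)|R\kappa$, the paper's intermediate inequality fails for $R\gg\kappa^{1/2}$ whenever $H/\kappa$ is bounded away from $1$, so the published argument, as written, only yields the claim for $\delta\geq\tfrac12$. The obstruction you identified is thus real, and your multiscale refinement (once written out) is not an optional embellishment but exactly what is needed to reach arbitrary $\delta>0$.
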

\begin{proof}
Let $R=R(\kappa)$ be a positive parameter such that $1\ll R\ll\kappa$ as $\kappa\to\infty$. We will choose $R$ as a power of $\kappa$ at the end of the proof. Recall the magnetic potential $\Fb$ introduced in \eqref{eq-hc2-Fb} and the ground state energy $M_0(b,R)$
in \eqref{eq-m0-3D}.

Let $(\mathcal Q_j)$ be the lattice generated
by the cube $Q_{R/\sqrt{\kappa H}}$. Let $\mathcal I=\{j~:~\mathcal Q_j\subset\Omega\}$. We define $N={\rm Card}\,(\mathcal I)$. Then, as $R/\sqrt{\kappa H} \to 0$, $N$ satisfies,
$$N = \left(\frac{R}{\sqrt{\kappa H}}\right)^{-3}|\Omega|
+\mathcal O\left(\frac{\kappa H}{R^2}\right)\,.$$

Let $b=H/\kappa$, and $u$ a minimizer of the functional in \eqref{eq-LF-3D}, i.e.
$$M_0(b,R)=\int_{Q_R}\left(b|(\nabla-i\Fb)u|^2-|u|^2+\frac12|u|^4\right)\,dx\,.$$
Here recall the notation that if $r>0$, then  $Q_r=(-r/2,r/2)\times(-r/2,r/2)\times(-r/2,r/2)$.

Since $u\in H^1_0(Q_R)$, we extend  $u$ to all $\R^3$ by `magnetic periodicity' such that,
\begin{multline*}u(x_1+R,x_2,x_3)=e^{iRx_2/2}u(x_1,x_2,x_3),\quad
 u(x_1,x_2+R,x_3)=e^{-iRx_1/2}u(x_1,x_2,x_3),\\ u(x_1,x_2,x_3+R)=u(x_1,x_2,x_3)\,.\end{multline*}
For all $x\in\Omega$, define,
$$v(x)=\begin{cases}u(x\sqrt{\kappa H}), & x \in \cup_{j \in {\mathcal I}} {\mathcal Q}_j\\
0& \text{ if not}
\end{cases}
\,.$$
Since the original $u \in H^1_0(Q_R)$, this defines a $v \in H^1(\Omega)$.

We will compute the energy of the configuration $(v,\Fb)$. By periodicity, we get,
\begin{align}\label{eq-ub-C0}
\mathcal E(v,\Fb)=
N\times\int_{Q_{R/\sqrt{\kappa H}}}\left(|(\nabla-i\kappa H\Fb)v|^2-\kappa^2|v|^2+\frac{\kappa^2}2|v|^4\right)\,dx\,.
\end{align}

A change of variables gives us,
$$\int_{Q_{R/\sqrt{\kappa H}}}\left(|(\nabla-i\kappa H\Fb)v|^2-\kappa^2|v|^2+\frac{\kappa^2}2|v|^4\right)\,dx
=\frac{M_0(b,R)}{b\sqrt{\kappa H}}\,.$$

Inserting this in \eqref{eq-ub-C0}, we get,
$$\mathcal E(v,\Fb)\leq  N\times \frac{M_0(b,R)}{b\sqrt{\kappa H}}\,.$$
We know from Theorems~\ref{thm-thmd-AS} and \ref{thm-3D-thmd} that $M_0(b,R)\leq g(b) R^3+CR^2$ for all $b\in[0,1]$ and $R$ sufficiently large. Using this together with the estimate on the number $N$, we get,
$$\mathcal E(v,\Fb)\leq g(b) |\Omega|\kappa^2+C\frac{\kappa^2}{R}\,.
$$
We select $R=\kappa^{-1+\delta}$. Recalling that $b=H/\kappa$, this choice of $R$ and the aforementioned upper bound finishes the proof of the proposition.
\end{proof}

\section{Proof of main theorems}\label{sec-proofs}

\begin{proof}[Proof of Theorem~\ref{thm-3D-main}]\

In light of Proposition~\ref{prop-ub-c0}, we only need to establish a lower bound for the ground state energy $\E0(\kappa,H)$
in \eqref{eq-3D-gs}.

Let $\ell=\kappa^{-1/2}$. Then  $\kappa^{-1}\ll\ell_\kappa\ll1$ as $\kappa\to\infty$.
Consider a lattice $(Q_{j,\ell})_j$ of $\R^3$ generated by the cube,
$$Q_{\ell}=(-\ell/2, \ell/2)\times(-\ell/2, \ell/2) \times(-\ell/2, \ell/2)\,.$$
Let $\mathcal I=\{j~:~Q_{j,\ell}\subset\Omega\}$,  $N_\kappa={\rm Card}\,\mathcal I$ and
$\Omega_\kappa=\Omega\setminus\left(\displaystyle\cup_{j\not\in \mathcal I}Q_{j,\ell}\right)$. Notice that
$N_\kappa=|\Omega|\ell^{-3}+{\mathcal O}(\ell^{-2})$ as $\kappa\to\infty$.

If $(\psi,\Ab)$ is a minimizer of \eqref{eq-3D-GLf}, we have,
\begin{equation}\label{eq-glob-en}
\E0(\kappa,H)=\mathcal E_0(\psi,\Ab;\Omega_\kappa)+\mathcal E_0(\psi,\Ab;\Omega\setminus\Omega_\kappa) + (\kappa H)^2 \int_{{\mathbb R}^3} |\curl \Ab - \Fb|^2\,dx \,,
\end{equation}
where, for any $D\subset\Omega$, the energy $\mathcal E_0(\psi,\Ab;D)$ is introduced in \eqref{eq-GLe0}.
We may write,
\begin{equation}\label{eq-glob-en1}
\E0(\kappa,H)\geq\mathcal E_0(\psi,\Ab;\Omega_\kappa)+\mathcal E_0(\psi,\Ab;\Omega\setminus\Omega_\kappa)\,.
\end{equation}

Notice that $|\Omega\setminus\Omega_\kappa|\sim\ell|\partial\Omega|$ as $\kappa\to\infty$.
Thus, we get by using the estimate in \eqref{eq-est1},
\begin{equation}\label{eq-en-bnd}
\left|\mathcal E_0(\psi,\Ab;\Omega\setminus\Omega_\kappa)\right|\leq C\ell\kappa^2=C\kappa^{3/2}\,.
\end{equation}
To estimate $\mathcal E_0(\psi,\Ab;\Omega_\kappa)$, we notice that,
$$\mathcal E_0(\psi,\Ab;\Omega_\kappa)=\sum_{j\in\mathcal I}\mathcal E_0(\psi,\Ab;Q_{j,\ell})\,.
$$
Using Propositions~\ref{prop-lb} we get,
\begin{align}
\mathcal E_0(\psi,\Ab;\Omega_\kappa)\geq N_\kappa \left(g(b)\kappa^2-C\kappa^{3/2}\right)\ell^3 =|\Omega|g(b)\kappa^2-C\kappa^{3/2}\,.\label{eq-en-blk}
\end{align}
Inserting \eqref{eq-en-blk} and \eqref{eq-en-bnd} into \eqref{eq-glob-en} we deduce that,
$$\E0(\kappa,H)\geq g(b)|\Omega|\kappa^2-C\kappa^{3/2}\,,$$
thereby finishing the proof of Theorem~\ref{thm-3D-main}.
\end{proof}

\begin{proof}[Proof of Theorem~\ref{thm-3D-op}]\
It suffices to prove Theorem~\ref{thm-3D-op} in the  case where
$\ell \leq \kappa^{-1/2}$. In order to reduce to this case consider
a cube $Q_{\ell}$ with sidelength $\ell > \kappa^{-1/2}$. We can
subdivide this cube into $(\ell/\ell')^3$ identical cubes of
side-length $\ell' \in [\kappa^{-1/2}/2, \kappa^{-1/2})$. If the
result of Theorem~\ref{thm-3D-op} is valid for each of these smaller
cubes, the statement for $Q_{\ell}$ follows.

Therefore,
let $\ell$ be such that $\kappa^{-1}\ll \ell\leq \kappa^{-1/2}$, and $Q_{\ell}$
a cube of side-length $\ell$ with a side parallel to the external magnetic field $\beta$.
Let $(\psi,\Ab)$ be a solution of \eqref{eq-3D-GLeq}. Then $\psi$ satisfies,
\begin{equation}\label{eq-GLeq-1}
-(\nabla-i\kappa H\Ab)^2\psi=(1-|\psi|^2)\psi\,\quad{\rm in}\quad\Omega\,.
\end{equation}
We multiply both sides of the equation in \eqref{eq-GLeq-1} by $\overline{\psi}$ then we integrate over $Q_{\ell}$.
An integration by parts gives us,
$$\int_{Q_{\ell}}\left(|(\nabla-i\kappa H\Ab)\psi|^2-\kappa^2|\psi|^2+\kappa^2|\psi|^4\right)\,dx
+\int_{\partial Q_{\ell}}\nu\cdot(\nabla-i\kappa H\Ab)\psi\,\overline{\psi}\,d\sigma(x)=0\,.$$
Using the estimates \eqref{eq-psi<1} and \eqref{eq-est1}, we get that the boundary term above is $\mathcal O(\kappa\ell^2)$. So, we
rewrite the above equation as follows,
\begin{equation}\label{eq-proof-op}-\frac12\kappa^2\int_{Q_{\ell}}|\psi|^4\,dx=\mathcal E_0(\psi,\Ab;Q_{\ell})+\mathcal O(\kappa\ell^2)\,.\end{equation}
Using Proposition~\ref{prop-lb} and the assumption $\kappa^{-1}\ll\ell$, we conclude that
\begin{equation}\label{eq-op-ub}
\frac12\kappa^2\int_{Q_{\ell}}|\psi|^4\,dx\leq -g(H/\kappa)|Q_{\ell}|\kappa^2
+ C \left(\ell +\frac1{\ell\kappa} \right)\kappa^2\ell^3.
\end{equation}

If $(\psi,\Ab)$ is a minimizer of \eqref{eq-3D-GLf},  then \eqref{eq-proof-op} is still true. We apply in this case
Proposition~\ref{prop-ub} to write an upper bound for $\mathcal E_0(\psi,\Ab;Q_{\ell_\kappa})$. Consequently, we deduce that,
\begin{equation}\label{eq-op-lb}\frac12\kappa^2\int_{Q_{\ell}}|\psi|^4\,dx\geq -g(H/\kappa)|Q_{\ell}|\kappa^2-C \left(\ell +\frac1{\kappa\ell}\right)\kappa^2\ell^3\,.
\end{equation}
Combining the upper bound in \eqref{eq-op-lb} with the lower bound in \eqref{eq-op-ub} and using that by assumption $\ell \leq \kappa^{-1/2}$, finishes the proof of Theorem~\ref{thm-3D-op}.
\end{proof}

\section{Additional estimates}\label{sec-hc2}
The aim of this section is to give additional estimates on the energy which are  particularly interesting  in the case where the magnetic field satisfies
$$H=\kappa+o(\kappa)\quad{\rm as}\quad\kappa\to\infty\,.$$
These estimates will be used in \cite{FKP} to prove the asymptotic formula in \eqref{eq-FKP}.

\subsection{A lower bound}\

We will prove  Theorem~\ref{thm-hc2-lb} below, whose statement requires some notation.
Let $D\subset\Omega$ be a given  open set such that there exists a subset $\widetilde D$ of $\R^2$
having smooth boundary and $D=\widetilde D\cap\Omega$. For all $a>0$, we assign to $D$ the following subset of $\Omega$,
\begin{equation}\label{eq-Da}
D_a=\{x\in\Omega~:~{\rm dist}(x,D)\leq a\}\,.
\end{equation}
Let $g:\R_+\to\R_+$ be a function such that $g(\kappa)\to \infty$ as $\kappa\to\infty$. We put
$$\omega_\kappa=\{x\in\Omega~:~{\rm dist}(x,\partial\Omega)\geq g(\kappa)/\kappa\}\,.$$

\begin{thm}\label{thm-hc2-lb}
Suppose that the magnetic field $H$ is a function of $\kappa$ such that,
$$1\leq \liminf_{\kappa\to\infty}\frac{H}{\kappa}\leq\limsup_{\kappa\to\infty}\frac{H}{\kappa}<\infty\,.$$
Let $\kappa\ni\R_+\mapsto a(\kappa)\in\R_+$ be a function satisfying $\displaystyle\lim_{\kappa\to\infty}a(\kappa)=0$.
Then, for any solution   $(\psi,\Ab)\in H^1(\Omega;\C)\times \dot
H^1_{\Div,\Fb}(\R^3)$ of  \eqref{eq-3D-GLf} and any  function $h\in C^1(\Omega)$ satisfying $\|h\|_{L^\infty(\Omega)}\leq1$ and ${\rm supp}\,h\subset \overline{D_a}\cap\omega_\kappa$, the following asymptotic lower bound holds,
\begin{equation}\label{eq-3D-hc2-lb}
\mathcal E_0(h\psi,\Ab)\geq E_2|D|[\kappa-H]_+^2+o\bigg(\max\left(\kappa,[\kappa-H]_+^2\right)\bigg),\quad{\rm as}\quad\kappa\to\infty\,.
\end{equation}
Here $E_2<0$ is the universal constant introduced in \eqref{eq-hc2-E2-TDL}, and $\mathcal E_0$ is the functional
introduced in \eqref{eq-GLe0}.
\end{thm}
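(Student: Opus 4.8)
The plan is to localize the energy $\mathcal E_0(h\psi,\Ab)$ into small cubes, reduce on each cube to the two–dimensional Abrikosov problem, and extract the sharp constant from the limit $c(R)/R^2\to E_2$ of Theorem~\ref{thm-AS}. Writing $b=H/\kappa$, the hypothesis $\liminf H/\kappa\geq 1$ gives $[1-b]_+\to 0$ and $[\kappa-H]_+=\kappa[1-b]_+=o(\kappa)$; thus the claimed main term $E_2|D|[\kappa-H]_+^2$ is exactly $E_2|D|\kappa^2[1-b]_+^2$, the form in which the Abrikosov energy naturally appears. Throughout, the regime is that $\psi$ is small on the interior set $\omega_\kappa$, and the whole difficulty is quantitative error control rather than a new idea.

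First I would fix a scale $\ell$ with $\kappa^{-1}\ll\ell\ll 1$ (a small negative power of $\kappa$), a lattice of cubes $Q_{j,\ell}$ parallel to $\beta$, and a partition of unity $\sum_j\chi_j^2=1$ with $|\nabla\chi_j|\leq C/\ell$. Since $\sum_j\chi_j\nabla\chi_j=0$, the magnetic IMS identity gives, with no cross term, $\mathcal E_0(h\psi,\Ab)\geq\sum_j\mathcal E_0(\chi_j h\psi,\Ab;Q_{j,\ell})-C\ell^{-2}\int_\Omega|h\psi|^2$. Only cubes meeting $\supp h\subset\overline{D_a}\cap\omega_\kappa$ contribute, so the sum has $\approx|D|\,\ell^{-3}$ terms (the gap between $D_a$ and $D$ is absorbed as $a\to 0$). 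On each such cube $f_j:=\chi_j h$ satisfies the hypotheses of Lemma~\ref{lem-lb}, which replaces $\Ab$ by the constant–curl potential $\Fb$ up to a gauge, at the cost of $C(\delta\kappa^2+\delta^{-1}\ell^2\lambda)\int_{Q_{j,\ell}}|f_j\psi|^2$. Rescaling by $\sqrt{\kappa H}$ with $R=\ell\sqrt{\kappa H}$ turns $\mathcal E_0(e^{-i\phi_0}f_j\psi,\Fb;Q_{j,\ell})$ into $(b\sqrt{\kappa H})^{-1}F^{\rm 3D}_{Q_R}(\cdot)\geq(b\sqrt{\kappa H})^{-1}M_0(b,R)$, the rescaled function lying in $H^1_0(Q_R)$.

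The sharp constant then comes from the chain $M_0(b,R)\geq R\,m_0(b,R)\geq R\,m_{\rm p}(b,R)$ (Theorem~\ref{thm-3D-thmd}(2) and Proposition~\ref{prop-m0=mp}), followed by the lower bound of Theorem~\ref{thm-mp=c}, namely $m_{\rm p}(b,R)\geq[1-b]_+^2\big((1+2\sigma)c(R)-C\sigma^{-3}(1-b)^2R^4\big)$, and the limit $c(R)/R^2\to E_2$. Since $R^3=\ell^3(\kappa H)^{3/2}$ and $\kappa H/b=\kappa^2$, the main part of each cube's contribution is $(b\sqrt{\kappa H})^{-1}R\,m_{\rm p}(b,R)\approx\ell^3E_2\kappa^2[1-b]_+^2=\ell^3E_2[\kappa-H]_+^2$, and summing over the $\approx|D|\ell^{-3}$ cubes yields the announced term $E_2|D|[\kappa-H]_+^2$. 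The factors $(1-\delta)$ and $(1+2\sigma)$ multiply a negative quantity and only improve the lower bound, so they cost at most $O(\delta[\kappa-H]_+^2)$ and $O(\sigma[\kappa-H]_+^2)$.

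The main obstacle is the error bookkeeping, which must beat $o(\max(\kappa,[\kappa-H]_+^2))$ in both regimes $[\kappa-H]_+^2\lesssim\kappa$ and $[\kappa-H]_+^2\gg\kappa$. Two points are delicate. First, the crude bound $\lambda\lesssim\kappa^2$ used in Proposition~\ref{prop-lb} is too lossy here, since $\delta^{-1}\ell^2\kappa^2$ and $\delta\kappa^2$ cannot both be made $o(\kappa)$ for an admissible $\ell$; instead I would use the a priori $L^4$–estimate Theorem~\ref{thm-3D-op}(1), giving $\frac{1}{|Q_\ell|}\int_{Q_\ell}|\psi|^4\lesssim[1-b]_+^2+\kappa^{-1/2}+(\ell\kappa)^{-1}$ (as $-2g(b)\leq[1-b]_+^2$ by \eqref{eq-prop-g(b)}) hence $\int_{Q_\ell}|\psi|^2\leq|Q_\ell|^{1/2}(\int_{Q_\ell}|\psi|^4)^{1/2}$ is small, and combine this with \eqref{eq-3D-apriori2} to bound $\lambda$ genuinely below $\kappa^2$; this is what makes both the magnetic–replacement error and the localization error $\ell^{-2}\int|h\psi|^2$ affordable. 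Second, the Landau–band projection error $\sigma^{-3}(1-b)^2R^4$ from Theorem~\ref{thm-mp=c}, after rescaling and summation, contributes $\approx\sigma^{-3}\ell^2[\kappa-H]_+^4|D|$, which must be forced below $o(\max(\kappa,[\kappa-H]_+^2))$. Carrying out the joint optimization of $\ell$, $\delta$ and $\sigma$ as small powers of $\kappa$ — possibly branching on whether $[\kappa-H]_+$ exceeds $\sqrt\kappa$ — so as to simultaneously annihilate the magnetic–replacement error, the Landau–band error, the $(1-\delta)$ and $(1+2\sigma)$ losses, and the localization error, is the technical heart of the argument.
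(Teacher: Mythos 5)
Your skeleton is the paper's own: localize with a quadratic partition of unity, reduce each cube via Lemma~\ref{lem-lb}, run the chain $M_0(b,R)\geq R\,m_0(b,R)\geq R\,m_{\rm p}(b,R)$ through Theorem~\ref{thm-3D-thmd}, Proposition~\ref{prop-m0=mp} and Theorem~\ref{thm-mp=c}, and extract $E_2$ from Theorem~\ref{thm-AS} (the paper packages the per-cube step as Lemma~\ref{lem-lb-hc2}). But two of your concrete devices fail, and both failures are quantitative, not cosmetic. First, the partition. With $|\nabla\chi_j|\leq C/\ell$ for a universal $C$, functions supported in cubes of side $\ell$ whose squares sum to one must overlap on a layer of width comparable to $\ell$, so the cube centers lie on a lattice of spacing $\theta\ell$ with a \emph{fixed} $\theta<1$; the number of cubes meeting $\supp h$ is then $\theta^{-3}|D|\ell^{-3}(1+o(1))$, not $(1+o(1))|D|\ell^{-3}$. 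The per-cube lower bound is the same negative quantity $\approx E_2\,\ell^3[\kappa-H]_+^2$ for every cube --- it uses only that $\chi_j h\psi\in H^1_0$ of its cube and cannot see that $\chi_j<1$ near the cube's boundary --- so summation produces the main term $\theta^{-3}E_2|D|[\kappa-H]_+^2$, which is strictly below the claimed $E_2|D|[\kappa-H]_+^2$ by a fixed factor. In the regime $[\kappa-H]_+^2\gg\kappa$ this loss is not $o\big(\max(\kappa,[\kappa-H]_+^2)\big)$, so the sharp constant is gone. This is exactly why the paper's proof carries a second parameter $\alpha$: cube centers at $(1-\alpha)\ell j$, gradient bound $C/(\alpha\ell)$, and $\alpha\to0$ slowly (there $\alpha=\epsilon_1$), so the count becomes $(1+o(1))|D|\ell^{-3}$ while the localization error grows to $\zeta/(\alpha^2\ell^2)$ and must enter the joint optimization. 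Your optimization omits this parameter entirely.

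Second, your replacement of the global a priori estimates is too weak precisely in the critical regime $[\kappa-H]_+\lesssim\sqrt\kappa$, where $\max(\kappa,[\kappa-H]_+^2)=\kappa$ and \emph{every} error must be $o(\kappa)$. Theorem~\ref{thm-3D-op}(1) carries the intrinsic error $C\big(\kappa^{-1/2}+(\ell\kappa)^{-1}\big)$, so the best interior bound it can give is $\int_{D_a\cap\omega_\kappa}|\psi|^4\lesssim\kappa^{-1/2}$, hence $S:=\int|h\psi|^2\lesssim\kappa^{-1/4}$, and, interpolating for the $\|\psi\|_{L^6}$ term in $\lambda$, at best $\lambda\lesssim\kappa^2\cdot\kappa^{-1/6}=\kappa^{11/6}$. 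The three errors $S/\ell^2$, $\delta\kappa^2 S$, $\delta^{-1}\ell^2\lambda S$, optimized first in $\delta$ and then in $\ell$, have minimum of order $S(\kappa\sqrt\lambda)^{2/3}=\kappa^{-1/4}\,\kappa^{23/18}=\kappa^{37/36}$, which is \emph{not} $o(\kappa)$; equivalently, the localization error forces $\ell\gg\kappa^{-5/8}$ while the field-replacement error forces $\ell\ll\kappa^{-2/3}$, an empty window (and taking $\ell<\kappa^{-1/2}$ only worsens the $(\ell\kappa)^{-1}$ term). No branching on the size of $[\kappa-H]_+$ rescues this, since the failure occurs in one fixed regime. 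The paper avoids the problem by invoking Almog's global bound (Lemma~\ref{lem-al}), $\int_\Omega|\psi|^4\leq C\max\big(\kappa^{-1},[\kappa/H-1]_+^2\big)=C\zeta^2$, together with Lemma~\ref{lem-curl} for $\|\curl(\Ab-\Fb)\|_{L^2(\R^3)}$ --- note your citation of \eqref{eq-3D-apriori2} controls only the $L^2(\Omega)$ norm of the curl, while $\lambda$ requires the $\R^3$ norm. These give $S\lesssim\zeta=\kappa^{-1/2}$ and $\lambda\lesssim\kappa^2\zeta^{2/3}=\kappa^{5/3}$, and the same optimization then yields a total error $\approx\kappa^{13/18}\ll\kappa$. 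So Almog's $L^4$ estimate is indispensable here: Theorem~\ref{thm-3D-op}(1) cannot substitute for it, because its $O(\kappa^{-1/2})$ error is inherited from the $O(\kappa^{3/2})$ accuracy of the bulk energy expansion and is too coarse by a positive power of $\kappa$.
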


The proof of Theorem~\ref{thm-hc2-lb} will be split into several lemmas. One ingredient to control the errors resulting from various approximations is an $L^4$-bound on the order parameter given in \cite{Al} (also it is obtained in
  \cite[Chapter~12]{FH-b} by a different method).
We state this bound below.

\begin{lem}\label{lem-al}
Suppose the magnetic field $H$ is a function of $\kappa$ and satisfies,
$$1\leq\liminf_{\kappa\to\infty}\frac{H}{\kappa}\leq\limsup_{\kappa\to\infty}\frac{H}{\kappa}<\infty\,.$$
There exist positive constants $C$ and $\kappa_0$ such that, if $(\psi,\Ab)\in H^1(\Omega;\C)\times \dot H^1_{\Div,\Fb}(\R^3)$ is a solution of  \eqref{eq-3D-GLeq}, then,
$$\int_{\Omega}|\psi|^4\,dx\leq C\max\left(\frac1\kappa,\left[\frac{\kappa}H-1\right]^2_+\right)\,,$$
for all $\kappa\geq\kappa_0$.
\end{lem}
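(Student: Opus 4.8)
The plan is to test the first Ginzburg--Landau equation of \eqref{eq-3D-GLeq} against $\overline\psi$ and combine the resulting identity with a sharp lower bound on the magnetic kinetic energy. First I would multiply $-(\nabla-i\kappa H\Ab)^2\psi=\kappa^2(1-|\psi|^2)\psi$ by $\overline\psi$, integrate over $\Omega$, and integrate by parts; the Neumann boundary condition in \eqref{eq-3D-GLeq} annihilates the boundary term, yielding
\begin{equation}\label{eq-al-rayleigh}
\kappa^2\int_\Omega|\psi|^4\,dx=\kappa^2\int_\Omega|\psi|^2\,dx-\int_\Omega|(\nabla-i\kappa H\Ab)\psi|^2\,dx\,.
\end{equation}
Thus the whole problem reduces to bounding the kinetic energy from below by $\kappa H\int_\Omega|\psi|^2\,dx$ up to controlled errors.

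Next I would estimate the kinetic energy from below. The induced field is $\kappa H\curl\Ab=\kappa H\big(\beta+\curl(\Ab-\Fb)\big)$, a small perturbation of the constant field $\kappa H\beta$. Reducing fiber-wise to the two-dimensional cross-section (the derivative along $\beta$ only helps), using the ground state substitution to produce the field term, and invoking the de\,Gennes surface analysis of the Neumann magnetic Laplacian (see \cite{FH-b}) to control the boundary contribution, I expect a bound of the form
\begin{equation}\label{eq-al-kinetic}
\int_\Omega|(\nabla-i\kappa H\Ab)\psi|^2\,dx\geq \kappa H\int_\Omega(\curl\Ab\cdot\beta)\,|\psi|^2\,dx-C\sqrt{\kappa H}\int_{\partial\Omega}|\psi|^2\,d\sigma\,.
\end{equation}
Since $|\psi|\leq1$ by \eqref{eq-psi<1}, $|\partial\Omega|<\infty$ and $\sqrt{\kappa H}\approx\kappa$, the surface term is $\mathcal O(\kappa)$. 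Writing $\curl\Ab\cdot\beta=1+\curl(\Ab-\Fb)\cdot\beta$ and applying Cauchy--Schwarz,
\begin{equation}\label{eq-al-field}
\kappa H\int_\Omega(\curl\Ab\cdot\beta)|\psi|^2\,dx\geq \kappa H\int_\Omega|\psi|^2\,dx-\kappa H\,\|\curl(\Ab-\Fb)\|_{L^2(\Omega)}\,\|\psi\|_{L^4(\Omega)}^2\,.
\end{equation}
By the a priori estimate \eqref{eq-3D-apriori2}, together with $\|\psi\|_{L^\infty(\Omega)}\leq1$ and the hypothesis $H\approx\kappa$, one has $\|\curl(\Ab-\Fb)\|_{L^2(\Omega)}\leq C/H\leq C/\kappa$, so the last error in \eqref{eq-al-field} is at most $C\kappa\,\|\psi\|_{L^4(\Omega)}^2$.

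Finally I would insert \eqref{eq-al-kinetic}--\eqref{eq-al-field} into \eqref{eq-al-rayleigh} to get
$$\kappa^2\int_\Omega|\psi|^4\,dx\leq(\kappa^2-\kappa H)\int_\Omega|\psi|^2\,dx+C\kappa\,\|\psi\|_{L^4(\Omega)}^2+C\kappa\,,$$
and close by a quadratic inequality. Since $\kappa^2-\kappa H=\kappa H(\tfrac\kappa H-1)$ and $\Lambda_{\rm min}\leq\kappa/H\leq\Lambda_{\rm max}$, dividing by $\kappa^2$ turns the first coefficient into $1-\tfrac H\kappa\leq C[\tfrac\kappa H-1]_+$; together with $\int_\Omega|\psi|^2\leq|\Omega|^{1/2}\|\psi\|_{L^4(\Omega)}^2$ and the abbreviation $Y:=\|\psi\|_{L^4(\Omega)}^2$ this yields $Y^2\leq C[\tfrac\kappa H-1]_+Y+\tfrac C\kappa Y+\tfrac C\kappa$. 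As $Y\leq|\Omega|^{1/2}$ is bounded, the term $\tfrac C\kappa Y$ is absorbed into $\tfrac C\kappa$, and solving $Y^2\leq C[\tfrac\kappa H-1]_+Y+\tfrac C\kappa$ gives $Y\leq C[\tfrac\kappa H-1]_++C\kappa^{-1/2}$, hence $\int_\Omega|\psi|^4\,dx=Y^2\leq C\max\big(\kappa^{-1},[\tfrac\kappa H-1]_+^2\big)$, as claimed. The main obstacle is the kinetic lower bound \eqref{eq-al-kinetic}: the leading constant must be exactly $\kappa H$ with a surface correction of order only $\sqrt{\kappa H}$ (rather than $\kappa H$), which is precisely where the boundary has to be handled carefully through local gauge choices and boundary spectral analysis. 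Equally important is that the deviation of the field from constant enters \eqref{eq-al-field} only through $\|\curl(\Ab-\Fb)\|_{L^2(\Omega)}$, controlled by $C/\kappa$ via \eqref{eq-3D-apriori2}; working instead with an $L^\infty$ bound on the potential $\Ab-\Fb$ would be too weak to preserve the sharp constant and would destroy the estimate.
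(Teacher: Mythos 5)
Your route is necessarily different from the paper's, because the paper does not prove this lemma at all: it imports it wholesale from Almog, citing the combination of Lemma~3.2 and Theorem~3.3 of \cite{Al}. Judged on its own terms, your skeleton is viable: the identity obtained by testing the first equation of \eqref{eq-3D-GLeq} against $\overline\psi$ (the Neumann condition killing the boundary term) is correct; the Cauchy--Schwarz step handling $\curl(\Ab-\Fb)$ through \eqref{eq-3D-apriori2} is correct, and your closing remark is apt --- it is essential that this error enters only through the $L^2$ norm of the curl; and the final quadratic inequality in $Y=\|\psi\|_{L^4(\Omega)}^2$ does yield the stated bound.

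The genuine gap is your kinetic lower bound (your second display), which carries the entire content of the lemma and which you only ``expect''. Moreover, the mechanism you invoke for it --- ``the de\,Gennes surface analysis of the Neumann magnetic Laplacian'' --- cannot produce it as described: Neumann spectral analysis gives the lower bound $\Theta_0\,\kappa H\int_\Omega|\psi|^2$ with the de\,Gennes constant $\Theta_0\approx 0.59<1$, and the failure of the constant $1$ near the boundary \emph{is} the surface superconductivity phenomenon; a constant $\Theta_0<1$ inserted in your scheme destroys the proof (the coefficient $\kappa^2-\Theta_0\kappa H\approx(1-\Theta_0)\kappa^2$ never becomes small). What saves the argument is that the deficit lives in a boundary layer of width $(\kappa H)^{-1/2}$ and can be traded for a boundary trace term. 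Concretely, with only tools from this paper: drop $|(\partial_{x_3}-i\kappa H A_3)\psi|^2\geq0$, slice $\Omega$ by planes $\{x_3=t\}$, and on almost every slice $\Omega_t$ use the factorization inequality (with $P_j=\partial_{x_j}-i\kappa H A_j$ and $\tau$ the unit tangent to $\partial\Omega_t$)
\begin{equation*}
\int_{\Omega_t}\big(|P_1\psi|^2+|P_2\psi|^2\big)\,dx_\bot
\geq \kappa H\int_{\Omega_t}(\curl\Ab\cdot\beta)\,|\psi|^2\,dx_\bot
-\Big|\oint_{\partial\Omega_t}\overline{\psi}\;\tau\cdot(\nabla-i\kappa H\Ab)\psi\,ds\Big|\,,
\end{equation*}
which follows from $\|(P_1\mp iP_2)\psi\|^2\geq0$ after one integration by parts (legitimate, since solutions are smooth up to the boundary). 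For a general $H^1$ function this boundary remainder is \emph{not} controlled by $\oint|\psi|^2$; for a solution it is controlled by the paper's own gradient estimate: by \eqref{eq-est1} (applicable here, since the hypothesis forces $\kappa/H$ into a compact subinterval of $(0,\infty)$ for large $\kappa$) and \eqref{eq-psi<1}, one has $|\tau\cdot(\nabla-i\kappa H\Ab)\psi|\leq C\sqrt{\kappa H}$ on $\overline{\Omega}$, so integrating over $t$ (the coarea Jacobian factor is $\leq1$) bounds the total boundary error by $C\sqrt{\kappa H}\int_{\partial\Omega}|\psi|\,d\sigma=\mathcal O(\kappa)$. This is of the same order as the remainder you postulated (the difference between $\int_{\partial\Omega}|\psi|\,d\sigma$ and $\int_{\partial\Omega}|\psi|^2\,d\sigma$ is immaterial, both being $\mathcal O(1)$), and with this substitute the rest of your proof goes through verbatim. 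Alternatively one could prove a trace-form inequality of your exact shape for arbitrary $H^1$ functions via half-plane analysis (a two-dimensional version is in \cite{FH-b}), but that is substantially more work than the route through \eqref{eq-est1}.
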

\begin{proof}
This is  a combination of the conclusions of Lemma~3.2 and Theorem~3.3 in \cite{Al}.
\end{proof}

The next lemma is taken from \cite[Lemma~10.33]{FH-b}, which, together with Lemma~\ref{lem-al}, give a good estimate of
$\|\curl(\Ab-\Fb)\|_{L^2(\R^3)}$.

\begin{lem}\label{lem-curl}
There exists a constant $C>0$ such that, if $(\psi,\Ab)\in H^1(\Omega;\C)\times \dot H^1_{\Div,\Fb}(\R^3)$ is a solution of  \eqref{eq-3D-GLeq}, then,
\begin{equation}\label{eq-est-curl}
\|\curl(\Ab-\Fb)\|_{L^2(\R^3)}\leq \frac{C}{H}\|\psi\|_{L^4(\Omega)}^2\,,
\end{equation}
for all $\kappa>0$ and $H>0$.
\end{lem}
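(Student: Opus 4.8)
The plan is to test the second Ginzburg--Landau equation in \eqref{eq-3D-GLeq} against the field $a:=\Ab-\Fb$ itself and to bound the resulting current integral by $\|\psi\|_{L^4}^2$. First I would record two structural facts. Since $\Fb$ has constant curl $\beta$, we have $\curl^2\Fb=0$, so $\curl^2 a=\curl^2\Ab$; and since $\Div\Ab=\Div\Fb=0$ we have $\Div a=0$ with $a\in\dot H^1(\R^3)$. Because $a$ is an admissible variation direction in the space $\dot H^1_{\Div,\Fb}(\R^3)$, pairing the equation $\curl^2\Ab=-\frac1{\kappa H}\IM(\overline\psi(\nabla-i\kappa H\Ab)\psi)\mathbf 1_\Omega$ with $a$ and integrating by parts (legitimate by the homogeneous Sobolev decay) gives
$$\|\curl a\|_{L^2(\R^3)}^2=-\frac1{\kappa H}\int_\Omega \IM\big(\overline\psi(\nabla-i\kappa H\Ab)\psi\big)\cdot a\,dx\,.$$

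Next I would estimate the right-hand side. Using the pointwise bound $|\IM(\overline\psi(\nabla-i\kappa H\Ab)\psi)|\le|\psi|\,|(\nabla-i\kappa H\Ab)\psi|$ together with H\"older's inequality with exponents $(3,2,6)$,
$$\Big|\int_\Omega \IM\big(\overline\psi(\nabla-i\kappa H\Ab)\psi\big)\cdot a\,dx\Big|\le \|\psi\|_{L^3(\Omega)}\,\|(\nabla-i\kappa H\Ab)\psi\|_{L^2(\Omega)}\,\|a\|_{L^6(\R^3)}\,.$$
For the kinetic factor I would deliberately avoid the pointwise estimate \eqref{eq-est1}, which would cost a spurious factor $\sqrt{\kappa H}$ and force the field regime; instead I multiply the first equation in \eqref{eq-3D-GLeq} by $\overline\psi$, integrate over $\Omega$, and use the Neumann boundary condition to get the energy identity $\int_\Omega|(\nabla-i\kappa H\Ab)\psi|^2=\kappa^2\int_\Omega(1-|\psi|^2)|\psi|^2$, whence $\|(\nabla-i\kappa H\Ab)\psi\|_{L^2(\Omega)}\le\kappa\|\psi\|_{L^2(\Omega)}$. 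For the magnetic factor I would invoke the div--curl estimate $\|a\|_{L^6(\R^3)}\le C\|\curl a\|_{L^2(\R^3)}$ already used in \eqref{eq-est3}, valid since $\Div a=0$.

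Combining these, the factor $\|\curl a\|_{L^2(\R^3)}$ appears on both sides and one power cancels, leaving
$$\|\curl a\|_{L^2(\R^3)}\le \frac{C}{H}\,\|\psi\|_{L^3(\Omega)}\,\|\psi\|_{L^2(\Omega)}\,.$$
Finally, since $\Omega$ is bounded, H\"older on $\Omega$ gives $\|\psi\|_{L^3(\Omega)}\le C\|\psi\|_{L^4(\Omega)}$ and $\|\psi\|_{L^2(\Omega)}\le C\|\psi\|_{L^4(\Omega)}$, which yields the claimed bound $\|\curl(\Ab-\Fb)\|_{L^2(\R^3)}\le \frac{C}{H}\|\psi\|_{L^4(\Omega)}^2$. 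I expect no serious obstacle: the only points requiring care are the rigorous justification of the integration by parts over all of $\R^3$ (handled by the definition of weak solution in the homogeneous Sobolev space, together with $\Div a=0$) and the deliberate use of the energy identity rather than \eqref{eq-est1}, which is precisely what produces the clean factor $1/H$ valid for \emph{all} $\kappa,H>0$, as the statement demands.
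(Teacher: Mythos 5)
Your proof is correct and complete. The paper itself gives no argument for this lemma---it simply cites \cite[Lemma~10.33]{FH-b}---and your argument (pairing the second Ginzburg--Landau equation with $a=\Ab-\Fb$, H\"older with exponents $(3,2,6)$, the kinetic-energy identity $\|(\nabla-i\kappa H\Ab)\psi\|_{L^2(\Omega)}\leq\kappa\|\psi\|_{L^2(\Omega)}$ obtained from the first equation and the Neumann condition, and the div--curl Sobolev bound $\|a\|_{L^6(\R^3)}\leq C\|\curl a\|_{L^2(\R^3)}$) is precisely the standard proof behind that citation, and it is indeed uniform in $\kappa,H>0$ as the statement requires.
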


\begin{lem}\label{lem-lb-hc2}
Suppose the assumptions in Theorem~\ref{thm-hc2-lb} hold.
There exist positive constants $C$ and $\kappa_0$ such that,
if $(\psi,\Ab)\in H^1(\Omega;\C)\times \dot H^1_{\Div,\Fb}(\R^3)$ is a critical point of \eqref{eq-3D-GLf},
 $f$ is a continuous function satisfying $\|f\|_{L^\infty(\Omega)}\leq 1$, ${\rm supp}\,f\subset Q_\ell$ and  $Q_\ell\subset\Omega$ is a cube of side length $\ell\in(0,1)$, then the  estimate,
\begin{multline}\label{eq-micro-lb}
\mathcal E_0(f \psi,\Ab)\geq (1-\delta)\frac{\ell}{\kappa H}[\kappa-H]_+^2
\left((1+2\sigma)c(\ell\sqrt{\kappa H})-C\sigma^{-3}(\kappa-H)^2H^2\ell^4\right)\\
-C(\delta\kappa^2+\delta^{-1}\ell^2\lambda)
\int_{Q_\ell}|f\psi|^2\,dx\,,\end{multline}
holds true for all $\delta\in(0,1)$, $\sigma\in(0,1/2)$ and
$\kappa\geq\kappa_0$. Here,
\begin{equation}\label{eq-lambda}
\lambda=(\kappa H)^2\left(\|\curl(\Ab-\Fb)\|_{L^2(\R^3)}^2+\frac1{\kappa H}\|\psi\|^2_{L^6(\Omega)}\right).
\end{equation}
\end{lem}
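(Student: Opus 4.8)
The plan is to mirror the proof of Proposition~\ref{prop-lb}, replacing the crude bound $M_0(b,R)\geq R^3g(b)$ used there (which only sees the leading order $g(b)$) by the sharper two-dimensional Abrikosov estimate of Theorem~\ref{thm-mp=c}, which is precisely what exposes the $[\kappa-H]_+^2$ behaviour. Since ${\rm supp}\,f\subset Q_\ell$, we have $\mathcal E_0(f\psi,\Ab)=\mathcal E_0(f\psi,\Ab;Q_\ell)$, so the first step is to apply Lemma~\ref{lem-lb} verbatim to pass from $\Ab$ to the constant-field potential $\Fb$:
$$\mathcal E_0(f\psi,\Ab)\geq (1-\delta)\,\mathcal E_0(u,\Fb;Q_\ell)-C(\delta\kappa^2+\delta^{-1}\ell^2\lambda)\int_{Q_\ell}|f\psi|^2\,dx,$$
where $u=e^{-i\phi_0}f\psi$. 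This already produces the entire last term of \eqref{eq-micro-lb}, so everything reduces to bounding $\mathcal E_0(u,\Fb;Q_\ell)$ from below by the Abrikosov term.

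Next I would rescale exactly as in \eqref{eq-3D-GLlb2}: with $b=H/\kappa$, $R=\ell\sqrt{\kappa H}$ and $\widetilde u(y)=u(y/\sqrt{\kappa H})$ one gets $\mathcal E_0(u,\Fb;Q_\ell)=\frac{1}{b\sqrt{\kappa H}}F^{\rm 3D}_{b,Q_R}(\widetilde u)$. Because $f$ is supported in the \emph{open} cube $Q_\ell$, the function $u$ vanishes near $\partial Q_\ell$, so $\widetilde u\in H^1_0(Q_R)$ and no cut-off is needed (unlike in Proposition~\ref{prop-lb}). I then carry out the three-to-two dimensional reduction from the proof of Theorem~\ref{thm-3D-thmd}: discarding the nonnegative term $b|\partial_{x_3}\widetilde u|^2$ gives
$$F^{\rm 3D}_{b,Q_R}(\widetilde u)\geq \int_{-R/2}^{R/2}G_{b,K_R}\big(\widetilde u(\cdot,x_3)\big)\,dx_3.$$
For almost every $x_3$ the slice $\widetilde u(\cdot,x_3)$ lies in $H^1_0(K_R)$, hence extends by magnetic periodicity into the space $E_R$; by the first inequality of Proposition~\ref{prop-m0=mp} this yields $G_{b,K_R}(\widetilde u(\cdot,x_3))\geq m_{\rm p}(b,R)$, and integrating over the $x_3$-interval of length $R$ gives $F^{\rm 3D}_{b,Q_R}(\widetilde u)\geq R\,m_{\rm p}(b,R)$.

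Finally I would invoke Theorem~\ref{thm-mp=c}, which for $\kappa$ large enough (so that $b\geq 1-\epsilon_0$ and $R\geq 2$) gives $m_{\rm p}(b,R)\geq [1-b]_+^2\big((1+2\sigma)c(R)-C\sigma^{-3}(1-b)^2R^4\big)$, and then substitute back through the scaling. Using the identities $[1-b]_+^2=\kappa^{-2}[\kappa-H]_+^2$, $R/\sqrt{\kappa H}=\ell$ and $(1-b)^2R^4=(\kappa-H)^2H^2\ell^4$, the prefactor collapses cleanly,
$$\frac{R}{b\sqrt{\kappa H}}\,[1-b]_+^2=\frac{\ell}{\kappa H}\,[\kappa-H]_+^2,$$
and the inner error term becomes $C\sigma^{-3}(\kappa-H)^2H^2\ell^4$, reproducing exactly the first term of \eqref{eq-micro-lb} after the $(1-\delta)$ factor is reinstated. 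The one genuinely nontrivial point, rather than the routine bookkeeping of the $\delta$- and $\sigma$-dependent errors, is that $c(R)$ and Theorem~\ref{thm-mp=c} presuppose the quantization condition $|K_R|/(2\pi)=\ell^2\kappa H/(2\pi)\in\mathbb N$; this is the only constraint that does not come for free, and it is to be arranged in the applications of this lemma by an appropriate choice of the side-length $\ell$.
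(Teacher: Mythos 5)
Your proposal is correct and follows essentially the same route as the paper: first Lemma~\ref{lem-lb} to replace $\Ab$ by $\Fb$ (producing the $C(\delta\kappa^2+\delta^{-1}\ell^2\lambda)$ error), then the rescaling to $F^{\rm 3D}_{b,Q_R}$, the reduction $F^{\rm 3D}_{b,Q_R}\geq R\,m_{\rm p}(b,R)$ (which the paper obtains by citing Theorem~\ref{thm-3D-thmd}(2) and Proposition~\ref{prop-m0=mp}, while you re-derive it by slicing in $x_3$ and magnetic periodicity --- the same argument underlying those results), and finally Theorem~\ref{thm-mp=c} with the identical algebraic substitutions. Your closing remark on the quantization condition $\ell^2\kappa H\in 2\pi\N$ is also consistent with the paper, which likewise defers this constraint to the choice of $\ell$ in the proof of Theorem~\ref{thm-hc2-lb}.
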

\begin{proof}
Using Lemma~\ref{lem-lb}, we get a real-valued function $\phi_0$ such that, with $u=e^{-i\phi_0}f\psi$, we may write,
\begin{equation}\label{eq-proof-m1}
\mathcal E_0(f \psi,\Ab)\geq (1-\delta) \mathcal E_0(u,\Fb)-C(\delta\kappa^2+\delta^{-1}\ell^2\lambda)
\int_{Q_\ell}|f\psi|^2\,dx\,.
\end{equation}
We estimate the energy $\mathcal E_0(u,\Fb)$ from below. Notice that ${\rm supp}\,u\subset Q_\ell$. By defining $R=\ell\sqrt{\kappa H}$ and the re-scaled function,
$$\forall~x\in Q_R\,,\quad v(x)=u(x/\sqrt{\kappa H})\,,$$
we get that $v\in H^1_0(Q_R)$. An easy computation gives,
$$
\mathcal E_0(u,\Fb)=\frac1{\sqrt{\kappa H}}\int_{Q_R}\left(|(\nabla-i\Fb)v|^2-\frac{\kappa}H|v|^2+\frac{\kappa}{2H}|v|^4\right)\,dx\,.
$$
Recall the definition of $M_0(b,R)$ in \eqref{eq-m0-3D}. With this notation, we get,
\begin{equation}\label{eq-proof-m2}
\mathcal E_0(u,\Fb)\geq \frac1{\sqrt{\kappa H}}\,\frac\kappa{H}\,M_0\left(\frac{H}\kappa,R\right)\,.\end{equation}
The conclusions in Theorem~\ref{thm-3D-thmd},  Theorem~\ref{thm-mp=c} and Proposition~\ref{prop-m0=mp} altogether
give,
$$ M_0\left(\frac{H}\kappa,R\right)\geq R \left[1-\frac{H}\kappa\right]_+^2
\left((1+2\sigma)c(R)-C\sigma^{-3}\left(1-\frac{H}\kappa\right)^2R^4\right)\,.$$
Recall that $R=\ell\sqrt{\kappa H}$. Substitution into \eqref{eq-proof-m2} gives,
\begin{equation}\label{eq-proof-m3}
\mathcal E_0(u,\Fb)\geq \frac{\ell}{\kappa H}[\kappa-H]_+^2
\left((1+2\sigma)c(\ell\sqrt{\kappa H})-C\sigma^{-3}(\kappa-H)^2H^2\ell^4\right)\,.
\end{equation}
Insersting \eqref{eq-proof-m3} into \eqref{eq-proof-m1} finishes the proof of Lemma~\ref{lem-lb-hc2}.
\end{proof}

\begin{proof}[Proof of Theorem~\ref{thm-hc2-lb}]
Consider two parameters  $\ell$ and $\alpha$ in the interval
$(0,1)$. The parameters $\ell$ and $\alpha$ will
be chosen as functions of $\kappa$  such that
$$\ell\to0\,,\quad \alpha\to0\,,\quad{\rm as}~
\kappa\to\infty\,.$$
We cover $\R^3$ by cubes $(Q_1(x_{j,\alpha}))_j$, where for all $j\in\mathbb Z^3$, $y=(y_1,y_2,y_3)\in\R^3$ and $\ell>0$, we define,
$$x_{j,\alpha}=(1-\alpha)j\,,\quad Q_\ell(y)=\prod_{k=1}^3\left(y_k-\frac\ell2,y_k+\frac\ell2\right)\,.$$
Let $(f_j)$ be a partition of unity in $\R^3$ such that,
$$\sum_j f_j^2=1\,,\quad {\rm supp}\,f_j\subset Q_1(x_{j,\alpha})\,,\quad |\nabla f_j|\leq \frac{C}{\alpha}\,,$$
for some universal constant $C$.

Defining the re-scaled functions,
$$\forall~x\in\R^3\,,\quad f_{j,\ell}(x)=f_j(x/\ell)\,,$$
we get a new partition of unity $(f_{j,\ell})_j$ such that each $f_{j,\ell}$ has support in a cube  of side length $\ell$ and,
$$\sum_j f_ {j,\ell}^2=1\,,\quad |\nabla f_{j,\ell}|\leq\frac{C}{\alpha\ell}\,.$$
Let $\mathcal J=\{j\in\mathbb Z^3~:~D_a\cap {\rm supp}\,f_{j,\ell}\not=\emptyset\}$ and $\mathcal N_\ell={\rm Card}\,\mathcal J$.
Then we know that
$$\mathcal N_\ell\times \ell^3\to |D_a|\quad{\rm as}~\ell\to0\,.$$
We have the localization formula,
\begin{equation}\label{eq-lb-mEN}
\mathcal E_0(h\psi,\Ab)=\sum_{j\in\mathcal J} \mathcal E_0(f_{j,\ell}h\psi,\Ab)-\sum_j\int_\Omega |\nabla f_{j,\ell}|^2|h\psi |^2\,dx\,.
\end{equation}
Using that $h\leq 1$,
\eqref{eq-lb-mEN}  gives us the following lower bound,
\begin{equation}\label{eq-lb-mET}
\mathcal E_0(h\psi,\Ab)\geq\sum_{j\in\mathcal J} \mathcal E_0(f_{j,\ell}h\psi,\Ab)-\frac{C}{\alpha^2\ell^2}\int_\Omega|\psi|^2\,dx\,.
\end{equation}
Many of the error terms will be controlled by the following parameter,
\begin{equation}\label{eq-lb-zeta}
\zeta=
\max\left(\kappa^{-1/2},\left[\frac{\kappa}H-1\right]_+\right)\,.
\end{equation}
Recall the parameter $\lambda$ from Lemma~\ref{lem-lb-hc2}. Using
Lemmas~\ref{lem-al} and \ref{lem-curl} we get the following  upper
bound on $\lambda$,
$$\lambda\leq C\kappa^2\zeta^{2/3}\,.$$
We apply Lemma~\ref{lem-lb-hc2} with $f=f_{j,\ell}$ to bound from
below each term $\mathcal E_0(f_{j,\ell}h\psi,\Ab)$.
This gives us,
\begin{multline}\label{eq-lb-m3}
\mathcal E_0(h\psi,\Ab)\geq \mathcal N_\ell
(1-\delta)\frac{\ell}{\kappa H}[\kappa-H]_+^2
\left((1+2\sigma)c(\ell\sqrt{\kappa H})-C\sigma^{-3}(\kappa-H)^2H^2\ell^4\right)\\
-C\bigg((\delta+\delta^{-1}\ell^2\zeta^{2/3})\kappa^2+\frac1{\alpha^2\ell^2}\bigg)
\int_{\Omega}|\psi|^2\,dx\,,
\end{multline}
for all $\delta\in(0,1)$ and $\sigma\in(0,1/2)$.

Theorem~\ref{thm-AS} tells us that if $(\ell\sqrt{\kappa H})^2$ is in $2\pi\mathbb N$ and large, then,
$$c(\ell\sqrt{\kappa H})=\ell^2\kappa H(E_2+o(1))\,.$$
Inserting this into \eqref{eq-lb-m3} and using that $\mathcal N_\ell\sim |D|\ell^{-3}$, we get,
\begin{multline}\label{eq-lb-m4}
\mathcal E_0(h\psi,\Ab)\geq \mathcal
(1-\delta)|D|
[\kappa-H]_+^2
\left((1+2\sigma)E_2-C\sigma^{-3}(\kappa-H)^2\ell^2\right)\\
-C\bigg((\delta+\delta^{-1}\ell\zeta^{2/3})\kappa^2+\frac1{\alpha^2\ell^2}\bigg)
\int_{\Omega}|\psi|^2\,dx\,.
\end{multline}
Also, using H\"older's inequality and
Lemma~\ref{lem-al} we get $\|\psi\|^2_{L^2(\Omega)}\leq C\zeta$. That
way, we infer from \eqref{eq-lb-m4} the bound,
\begin{multline}\label{eq-lb-mFOUR}
\mathcal E_0(h\psi,\Ab)\geq \mathcal
(1-\delta)|D|
[\kappa-H]_+^2
\left((1+2\sigma)E_2-C\sigma^{-3}\kappa^2\zeta^2\ell^2\right)\\
-C\bigg((\delta+\delta^{-1}\ell\zeta^{2/3})\kappa^2+\frac1{\alpha^2\ell^2}\bigg)\zeta\,.
\end{multline}
Let
$$\epsilon_1=\max(|\ln(\zeta^{11/6}\kappa)|^{-1},\zeta^{1/8})\,.$$ It
results from the definition of the parameter $\zeta$ that
$\epsilon_1\ll1$ as $\kappa\to\infty$. We select the parameters
$\delta$, $\alpha$ and $\ell$ as follows,
$$\alpha=\epsilon_1\,,\quad \ell=\frac1{\alpha \epsilon_1\kappa\sqrt{\zeta}}\,,\quad \delta=\epsilon_1\zeta\,.$$
It is easy to check that,
$$\kappa^2\zeta^2\ell^2\ll1\,,\quad
\frac{\zeta}{\alpha^2\ell^2}+\delta\kappa^2\zeta+\delta^{-1}\ell\zeta^{5/3}\kappa^2\ll\kappa^2\zeta^2\,,\quad
{\rm and}~\kappa^{-1}\ll\ell\,,$$
as $\kappa\to\infty$.

We select the parameter $\sigma$ as follows,
$$\sigma=\left(\kappa^2\zeta^2\ell^2\right)^{1/4}$$
so that $\sigma\ll1$ and $\sigma^{-3}\kappa^2\zeta^2\ell^2\ll1$ as
$\kappa\to\infty$. Thanks to this choice of parameters, and noticing
that $\kappa^2\zeta^2=\max(\kappa,[\kappa-H]_+^2)$, we infer from
\eqref{eq-lb-mFOUR},
$$  \mathcal E_0(h\psi,\Ab)\geq \mathcal
|D|
[\kappa-H]_+^2E_2+o\bigg(\max\left(\kappa,[\kappa-H]_+^2\right)\bigg)\,.$$
This finishes the proof of Theorem~\ref{thm-hc2-lb}.

\end{proof}

\subsection{Energy estimate of a trial configuration}\

The aim of this section is to prove Theorem~\ref{thm-tf-blk} below, which estimates the energy of a test configuration. The construction of the test configuration requires  some notation. Let $\chi$ be a cut-off function such that,
$$\chi\in C_c^\infty(\R)\,,\quad \supp\chi\subset[-2,2]\,,\quad 0\leq\chi\leq 1\quad{\rm in}~\R\,,\quad
\chi=1\quad{\rm in~} [-1,1]\,.$$
Let $\eta\in(0,1)$ and define a function $h_\eta\in C_c^\infty(\Omega)$ as follows,
$$\forall~x\in \Omega\,,\quad  h_\eta(x)=1-\chi\left(\frac{\dist(x,\partial\Omega)}{\eta}\right)\,.$$
Notice that $h_\eta$ vanishes in  a thin neighborhood of the boundary,
$$\Omega(\eta)=\{x\in\partial\Omega~:~\dist(x,\partial\Omega)\leq  \eta\}\,,$$
and $h_\eta=1$ in $\Omega\setminus\Omega(2\eta)$.

Consider a parameter $\ell\in(0,1)$. Let $b=H/\kappa$ and $R=\ell\sqrt{\kappa H}$.  Recall the ground state energy
$m_{\rm p}(b,R)$ defined in \eqref{eq-mp(b,R)}, together with the space $E_R$  in \eqref{eq-hc2-space1}.
Let $u_{b,R}\in E_R$ be a minimizer of the energy  in \eqref{eq-LF-2D}, i.e.
$$m_{\rm p}(b,R)=\int_{K_R}\left(b|(\nabla-i\Ab_0)u_{b,R}|^2-|u_{b,R}|^2+\frac12|u_{b,R}|^4\right)\,dx\,,
$$
where $K_R=(-R/2,R/2)\times(-R/2,R/2)$ and $\Ab_0(x_1,x_2)=(-x_2/2,x_1/2)$. By definition of the space $E_R$, the function $u_{b,R}\in H^1_{\rm loc}(\R^2)$.

We define the test function $\psi^{\rm blk}_{\eta,R}\in H^1(\Omega;\C)$ as follows,
\begin{equation}\label{eq-tf-blk}
\forall~x\in\Omega\,,\quad \psi^{\rm blk}_{\eta,R}(x)=h_\eta(x) \,u_{b,R}(\sqrt{\kappa H}\,x_\bot)\,.
\end{equation}
Here we used the notation that if $x=(x_1,x_2,x_3)\in\R^3$, then $x_\bot=(x_1,x_2)$.

\begin{thm}\label{thm-tf-blk}
Suppose that the magnetic field $H$ is a function of $\kappa$ such that,
$$1\leq \liminf_{\kappa\to\infty}\frac{H}{\kappa}\leq\limsup_{\kappa\to\infty}\frac{H}{\kappa}<\infty\,.$$
Suppose furthermore that $\ell$ and $\eta$ are  functions of $\kappa$ such that, as $\kappa\to\infty$,
$$\kappa^{-1}\ll\ell\ll1\,,\quad \kappa^{-1}\ll\eta\ll1\,.$$
Let $R=\ell\sqrt{\kappa H}$,  $\psi^{\rm blk}_{\eta,R}$ be the function introduced in \eqref{eq-tf-blk} and $\Fb(x)=(-x_2/2,x_1/2,0)$. Then, as $\kappa\to\infty$,
$$\mathcal E(\psi_{\eta,R}^{\rm blk},\Fb)\leq E_2|\Omega|\,[\kappa-H]_+^2+o\bigg(\max(\kappa,[\kappa-H]_+^2)\bigg)\,.$$
Here $E_2<0$
is the universal constant introduced in \eqref{eq-hc2-E2-TDL}, and $\mathcal E$ is the functional in \eqref{eq-3D-GLf}.
\end{thm}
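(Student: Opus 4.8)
The plan is to compute directly the energy of the explicit trial state $\psi^{\rm blk}_{\eta,R}=h_\eta\,u$ with $u(x)=u_{b,R}(\sqrt{\kappa H}\,x_\bot)$, and to compare it with the thermodynamic value. The crucial first simplification is that the test potential is exactly $\Fb$, and $\curl\Fb=\beta$, so the field energy $\kappa^2H^2\int_{\R^3}|\curl\Fb-\beta|^2\,dx$ vanishes identically. Thus
\[
\mathcal E(\psi^{\rm blk}_{\eta,R},\Fb)=\int_\Omega\Big(|(\nabla-i\kappa H\Fb)\psi^{\rm blk}_{\eta,R}|^2-\kappa^2|\psi^{\rm blk}_{\eta,R}|^2+\tfrac{\kappa^2}2|\psi^{\rm blk}_{\eta,R}|^4\Big)\,dx,
\]
and the whole problem reduces to bounding this electric energy from above. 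Since $\liminf H/\kappa\geq1$ forces $[1-b]_+\to0$, where $b=H/\kappa$, I would dispose of the case $b\geq1$ first: by Proposition~\ref{prop-periodic} the minimizer $u_{b,R}$ vanishes, so $\psi^{\rm blk}_{\eta,R}\equiv0$, the energy is $0$, and $[\kappa-H]_+=0$; the bound holds trivially. Henceforth $0<b<1$ with $1-b$ small.

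The key structural facts are that $u$ is independent of $x_3$, that it is magnetic-periodic in $x_\bot$ with period $\ell$ (because $u_{b,R}\in E_R$ and $R=\ell\sqrt{\kappa H}$), and that $\Fb$ acts only in the $x_\bot$ variables, its first two components coinciding with $\Ab_0$ and its third vanishing. Consequently, wherever $h_\eta\equiv1$ the $x_3$-derivative of the trial state is zero, and after the rescaling $y=\sqrt{\kappa H}\,x_\bot$ one period cube of side $\ell$ carries exactly the energy $\tfrac{\ell}{b}G_{b,K_R}(u_{b,R})=\tfrac{\ell}{b}m_{\rm p}(b,R)$. I would then cover the bulk $\Omega\setminus\Omega(2\eta)$ by translates of $Q_\ell$, exactly as in the proof of Proposition~\ref{prop-ub-c0}, sum the per-cube energies over interior cubes, and collect the cubes meeting the boundary of the bulk into a lower-order covering error of size $O(\ell\,\kappa^2[1-b]_+^2)$. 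This produces the main term
\[
\frac{|\Omega|}{b\ell^2}\,m_{\rm p}(b,R)=\kappa^2\,\frac{m_{\rm p}(b,R)}{R^2}\,|\Omega|,
\]
using $\kappa H/b=\kappa^2$ and $R^2=\ell^2\kappa H$. Now Proposition~\ref{prop-proof-AS} gives $m_{\rm p}(b,R)/R^2\leq g(b)+C(1-b)^{3/2}/R$, and Theorem~\ref{thm-AS} gives $g(b)=E_2(1-b)^2+o((1-b)^2)$ as $b\to1_-$; since $\kappa^2(1-b)^2=[\kappa-H]_+^2$, the main term equals $E_2|\Omega|[\kappa-H]_+^2+o([\kappa-H]_+^2)$ plus the thermodynamic remainder $C\kappa^2(1-b)^{3/2}|\Omega|/R$.

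It remains to control the boundary shell. In $\Omega(\eta)$ the trial state vanishes and contributes nothing, so the only genuinely new contribution is the transition shell $T=\Omega(2\eta)\setminus\Omega(\eta)$, of volume $O(\eta)$, where both $h_\eta$ and $\nabla h_\eta$ are active. Here I would use the pointwise bound $\|u_{b,R}\|_{L^\infty}\leq C[1/b-1]_+^{1/2}\leq C(1-b)^{1/2}$ from Proposition~\ref{prop-periodic}, together with the energy identity obtained by testing the Euler--Lagrange equation \eqref{eq-GL-R2} against $u_{b,R}$ over a period cell,
\[
b\int_{K_R}|(\nabla-i\Ab_0)u_{b,R}|^2\,dx=\int_{K_R}\big(|u_{b,R}|^2-|u_{b,R}|^4\big)\,dx\leq C(1-b)R^2,
\]
which after rescaling bounds the average kinetic density of $u$ by $O(\kappa^2(1-b))$. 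One then estimates the cutoff-gradient term by $\int|\nabla h_\eta|^2|u|^2\leq C\eta^{-1}(1-b)$, the transition kinetic term by $O(\eta\,\kappa^2(1-b))$, and the transition quartic term by $O(\eta\,\kappa^2(1-b)^2)$; the negative potential term only improves the upper bound.

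The main obstacle is precisely the simultaneous smallness of the shell errors and the thermodynamic remainder against the competing scale $\max(\kappa,[\kappa-H]_+^2)$: the shell kinetic error $\eta\,\kappa^2(1-b)$ and the Proposition~\ref{prop-proof-AS} remainder $\kappa^2(1-b)^{3/2}/R$ are not automatically negligible, so they must be balanced by taking $\ell$ and $\eta$ as suitable powers of $\kappa$ and of the deviation scale $1-b$, in the spirit of the parameter choices in the proof of Theorem~\ref{thm-hc2-lb}. Once all errors are shown to be $o(\max(\kappa,[\kappa-H]_+^2))$, assembling the bulk main term, the covering error, and the shell error yields the stated upper bound.
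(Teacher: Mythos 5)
Your proposal correctly identifies the trivial vanishing of the field energy, the exact per-cube identity in the region where $h_\eta\equiv 1$, and the reduction of the main term to $\kappa^2\,m_{\rm p}(b,R)R^{-2}|\Omega|$; the case $b\geq 1$ is also handled correctly. But there is a genuine gap, and you have in fact put your finger on it yourself: the error terms your scheme produces are \emph{not} $o\big(\max(\kappa,[\kappa-H]_+^2)\big)$ for all admissible $\ell,\eta$, and your proposed remedy --- ``taking $\ell$ and $\eta$ as suitable powers of $\kappa$'' --- is not available. In Theorem~\ref{thm-tf-blk} the functions $\ell$ and $\eta$ are part of the \emph{hypotheses}: they are arbitrary functions subject only to $\kappa^{-1}\ll\ell\ll 1$, $\kappa^{-1}\ll\eta\ll 1$, and the trial state $\psi^{\rm blk}_{\eta,R}$ is built from them, so the conclusion must hold for \emph{every} such choice. (This is unlike Theorem~\ref{thm-hc2-lb}, where the parameters $\ell,\alpha,\delta,\sigma$ are internal to the proof and may be optimized.) Concretely: your shell kinetic error $\eta\kappa^2(1-b)$, obtained by discarding the negative potential term and bounding the kinetic density of $u$ by its cell average $O(\kappa^2(1-b))$, is of size $\kappa^{5/4}\gg\kappa=\max(\kappa,[\kappa-H]_+^2)$ when $1-b\sim\kappa^{-1/2}$ and $\eta\sim\kappa^{-1/4}$; the remainder $C\kappa^2(1-b)^{3/2}/R\approx\kappa(1-b)^{3/2}/\ell$ coming from Proposition~\ref{prop-proof-AS} is of size $\kappa^{9/8}\gg\kappa$ when $1-b\sim\kappa^{-1/2}$ and $\ell\sim\kappa^{-7/8}$. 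Moreover, your claim that the covering error is $O(\ell\kappa^2[1-b]_+^2)$ is unjustified: on partial cubes the kinetic and potential terms do not cancel, so the honest bound from the cell-average kinetic density is $O(\ell\kappa^2(1-b))$, which suffers from the same defect. Thus your argument proves only the weaker statement that \emph{some} admissible $\ell,\eta$ work.

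What is missing is precisely the mechanism by which the paper makes every error either multiplicative in $[\kappa-H]_+^2$ or of size $[1-b]_+/\eta$. The paper never estimates the kinetic energy of the trial state in the transition shell by a crude positive bound; instead it uses that $u_{b,R}$ solves \eqref{eq-GL-R2} in all of $\R^2$ (by periodicity) and tests this equation against $f^2\overline{u_{b,R}}$ with $f=h_\eta(\cdot/\sqrt{\kappa H})$, which yields the exact identity \eqref{eq-ub-hc2}: the combination ``kinetic $-$ quadratic $+$ (part of the) quartic'' collapses to the localization term $\int|\nabla f|^2|u_{b,R}|^2$, of order $[1-b]_+\sqrt{\kappa H}/\eta$ after rescaling. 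This keeps exactly the kinetic/potential cancellation that you throw away (``the negative potential term only improves the upper bound''), and leaves only the two errors $[1-b]_+/\eta$ and $\eta\kappa^2[1-b]_+^2$, both of which are $o\big(\max(\kappa,[\kappa-H]_+^2)\big)$ for \emph{every} admissible $\eta$ since $[1-b]_+/\eta\ll\kappa[1-b]_+\leq\kappa^{-1/2}\max(\kappa,\kappa^2[1-b]_+^2)$. Similarly, instead of Proposition~\ref{prop-proof-AS} (whose remainder is additive in $R^{-1}$), the paper bounds $m_{\rm p}(b,R)\leq[1-b]_+^2\,c(R)$ by Theorem~\ref{thm-mp=c} and then uses $c(R)/R^2\to E_2$ from Theorem~\ref{thm-AS}; the resulting error is $o(R^2[1-b]_+^2)$, i.e.\ multiplicative in $[\kappa-H]_+^2$, hence harmless for every admissible $\ell$ (only $R=\ell\sqrt{\kappa H}\to\infty$ is needed). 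If you replace your shell estimate by the tested-equation identity and your use of Proposition~\ref{prop-proof-AS} by Theorem~\ref{thm-mp=c} combined with Theorem~\ref{thm-AS}, your outline becomes a correct proof of the theorem as stated.
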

\begin{proof}
Recall the magnetic potential $\Ab_0(x_1,x_2)=(-x_2/2,x_1/2)$ and that the function $u_{b,R}$ satisfies the equation\,,
$$-b(\nabla-i\Ab_0)u_{b,R}=(1-|u_{b,R}|^2)u_{b,R}\quad{\rm in~}\R^2\,.$$
By assumption on the magnetic field, we can make $[1-b]_+^{1/2}$
arbitrarily close to $0$ for all  $\kappa\geq\kappa_0$ by choosing
$\kappa_0>0$ sufficiently large. It results from
Proposition~\ref{prop-periodic} that $u_{b,R}$ satisfies the
universal bound $|u_{b,R}|\leq C_{\max}[1-b]_+^{1/2}$.

An integration by parts gives us,
\begin{equation}\label{eq-ub-hc2}
\int_{\R^3}\left(b|(\nabla-i\Fb)f u_{b,R}|^2-|fu_{b,R}|^2+f^2|u_{b,R}|^4\right)\,dx=\int_{\R^3}\big||\nabla f|u_{b,R}\big|^2\,dx\,,
\end{equation}
for any compactly supported smooth real-valued function $f\in C_c^2(\R^3)$ satisfying $0\leq f\leq 1$ in $\R^3$.
Consequently, we get
\begin{multline}\label{eq-ub1-hc2}
\int_{\R^3}\left(b|(\nabla-i\Fb)f u_{b,R}|^2-|fu_{b,R}|^2+\frac12|fu_{b,R}|^4\right)\,dx\leq \frac12\int_{\R^3}f^2(f^2-2)|u_{b,R}|^4\,dx
\\+C_{\max}^2[1-b]_+\int_{\R^3}|\nabla f|^2\,dx\,.
\end{multline}
Let $D=\supp f$ and $(Q_{R,j})_{j\in\Z^2}$ a uniform lattice of $\R^3$ whose unit cell is the cube
$$Q_R=K_R\times (-R/2,R/2)\,,\quad K_R=(-R/2,R/2)\times(-R/2,R/2)\,.$$
Let $\mathcal J=\{j\in\Z^2~:~Q_{R,j}\cap D\not=\emptyset\}$ and $N={\rm card}\,\mathcal J$. Recall that $|u_{b,R}|$ is periodic in $\R^2$. Then, writing
\begin{align*}
\int_{\R^3}f^2(2-f^2)|u_{b,R}|^4\,dx&=
\int_{\cup_{j\in\mathcal J}Q_{R,j}}|u_{b,R}|^4\,dx+\int_{\cup_{j\in\mathcal J}Q_{R,j}}(2f^4-f^2-1)|u_{b,R}|^4\,dx\\
&=NR\int_{K_R}|u_{b,R}|^4\,dx_\bot+\int_{\cup_{j\in\mathcal J}Q_{R,j}}(2f^4-f^2-1)|u_{b,R}|^4\,dx\,.
\end{align*}
We use the bounds $|u_{b,R}|\leq C_{\max} [1-b]_+^{1/2}$ and $0\leq f\leq 1$, and we notice that
an integration by parts gives $m_{\rm p}(b,R)=-\frac12\int_{K_R}|u_{b,R}|^4 \,dx$. Consequently, we obtain that,
$$\int_{\R^3}f^2(2-f^2)|u_{b,R}|^4\,dx\geq -2NR\,m_{\rm p}(b,R)+3C_{\max}^4
[1-b]_+^2\int_{\cup_{j\in\mathcal J}Q_{R,j}}(f^2-1)\,dx\,.$$
Inserting this into \eqref{eq-ub1-hc2} we get,
\begin{multline}\label{eq-ub2-hc2}
\int_{\R^3}\left(b|(\nabla-i\Fb)f u_{b,R}|^2-|fu_{b,R}|^2+\frac12|fu_{b,R}|^4\right)\,dx\\
\leq
NR\,m_{\rm p}(b,R)+C_{\max}^2[1-b]_+\int_{\cup_{j\in\mathcal J}Q_{R,j}}\left(|\nabla f|^2+\frac{3C_{\max}[1-b]_+}2(1-f^2)\right)\,dx\,.
\end{multline}
We choose $f(x)=h_\eta(x/\sqrt{\kappa H})$. Performing a change of variable in \eqref{eq-ub2-hc2}, we get (recall that $b=H/\kappa$ by definition),
\begin{multline}\label{eq-ub3-hc2}
\frac{H\sqrt{\kappa H}}{\kappa}
\int_{\R^3}\left(|(\nabla-i\kappa H\Fb)\psi^{\rm blk}_{\eta,R}|^2-\kappa^2|\psi^{\rm blk}_{\eta,R}|^2+\frac{\kappa^2}
2|\psi^{\rm blk}_{\eta,R}|^4\right)\,dx\\
\leq
NR\,m_{\rm p}(b,R)+MC_{\max}^2[1-b]_+\left(\frac{1}{\eta}
+\frac{\eta \kappa HC_{\max}[1-b]_+}{2}\right)\sqrt{\kappa H}\,,
\end{multline}
where $M$ is a universal constant. With this choice of $f$, it is easy to check that the number $N$ satisfies,
$$N\times \left(\frac{R}{\sqrt{\kappa H}}\right)^3\to|\Omega|\,,\quad {\rm as}\quad R\to\infty\,.$$
We insert this into \eqref{eq-ub3-hc2}. Theorems~\ref{thm-mp=c} and \ref{thm-AS}
together tell us that
$$m_{\rm p}(b,R)\leq R^2 [1-b]_+^2(E_2+o(1))\quad{\rm as}\quad R\to\infty\,,$$
where $E_2$ is a universal constant. Therefore, we deduce from \eqref{eq-ub3-hc2} that,
$$\mathcal E(\psi^{\rm blk}_{\eta,R},\Fb)\leq
\kappa^2[1-b]_+^2E_2(1+o(1))+M'C_{\max}^2[1-b]_+\left(\frac{1}{\eta}+\frac{\eta C_{\max}\kappa^2[1-b]_+}{2}\right)\,.
$$
where $M'$ is a new universal constant.

Since $\kappa^{-1}\ll\eta\ll 1$ as $\kappa\to\infty$, it is easy to check that
$$\frac{[1-b]_+}{\eta}\ll \kappa^2 \max\left(\frac1\kappa,[1-b]_+^2\right)\,.$$ Since $b=H/\kappa$,
this finishes the proof of the theorem.
\end{proof}

\section*{Acknowledgements}
The authors were partially supported by the Lundbeck foundation. SF was partially supported by the European Research Council under the European Community's
Seventh Framework Programme (FP7/2007-2013)/ERC grant agreement
n$^{\rm o}$ 202859.  AK was partially  supported by a grant from  the Lebanese University.

\end{document}